\@date \else {\vskip3ex \centering\footnotesize\@date\par\vskip1ex}\fi
\else \@footnotetext{\@setdate}\fi}
\def\Autoref#1{%
	\begingroup
	\edef\reserved@a{\cpttrimspaces{#1}}%
	\ifcsndefTF{r@#1}{%
		\xaftercsname{\expandafter\testreftype\@fourthoffive}
		{r@\reserved@a}.\\{#1}%
	}{%
		\ref{#1}%
	}%
	\endgroup
}
\def\testreftype#1.#2\\#3{%
	\ifcsndefTF{#1autorefname}{%
		\def\reserved@a##1##2\@nil{%
			\uppercase{\def\ref@name{##1}}%
			\csn@edef{#1autorefname}{\ref@name##2}%
			\autoref{#3}%
		}%
		\reserved@a#1\@nil
	}{%
		\autoref{#3}%
	}%
}
\numberwithin{equation}{section}
\theoremstyle{plain}
\newtheorem{theorem}{Theorem}[section]
\newtheorem{corollary}[theorem]{Corollary}
\newtheorem{lemma}[theorem]{Lem\-ma}
\newtheorem{proposition}[theorem]{Prop\-o\-si\-tion}
\newtheorem{question}[theorem]{Question}
\newtheorem{conjecture}[theorem]{Conjecture}
\newtheorem{fact}[theorem]{Fact}
\newtheorem{observation}[theorem]{Observation}
\theoremstyle{definition}
\newtheorem{definition}[theorem]{Definition}
\newtheorem{construction}[theorem]{Construction}
\newtheorem{remark}[theorem]{Remark}
\newtheorem{example}[theorem]{Example}
 \newtheoremstyle{dotless}{}{}{\itshape}{}{\bfseries}{}{ }{}
\theoremstyle{dotless}
\newtheorem*{query}{\ \ }
\newcommand{\labeltext}[2]{%
	\@bsphack
	\csname phantomsection\endcsname 
	\def\@currentlabel{#1}{\label{#2}}%
	\@esphack
}
\newenvironment{acknowledgements}{\ 
	
	{\textsl{Acknowledgements.}}}{}
\newcommand{\N}[0]{\mathbb{N}}
\newcommand{\Z}[0]{\mathbb{Z}}
\newcommand{\Q}[0]{\mathbb{Q}}
\newcommand{\R}[0]{\mathbb{R}}
\renewcommand{\H}[0]{\mathbb{H}}
\newcommand{\OO}[0]{\mathcal{O}}
\newcommand{\MM}[0]{\mathcal{M}}
\newcommand{\NN}[0]{\mathcal{N}}
\newcommand{\supp}[0]{\mathrm{supp}}
\newcommand{\brackets}[1]{\left( #1 \right)}
\newcommand{\setbr}[1]{\left\{ #1 \right\}}
\newcommand{\pow}[1]{\!\left(\!\left( #1 \right)\!\right)}
\renewcommand{\L}[0]{\mathcal{L}}
\newcommand{\Lor}[0]{\mathcal{L}_{\mathrm{or}}}
\newcommand{\Log}[0]{\mathcal{L}_{\mathrm{og}}}
\newcommand{\Lr}[0]{\mathcal{L}_{\mathrm{r}}}
\newcommand{\Lvf}[0]{\mathcal{L}_{\mathrm{vf}}}
\newcommand{\Trcf}[0]{T_{\mathrm{rcf}}}
\newcommand{\rc}[1]{{#1}^{\mathrm{rc}}}
\renewcommand{\div}[1]{{#1}^{\mathrm{div}}}
\newcommand{\ol}[1]{\overline{#1}}
\newcommand{\ul}[1]{\underline{#1}}
\newcommand\restr[2]{{
		\left.\kern-\nulldelimiterspace 
		#1
		\vphantom{\big|} 
		\right|_{#2}
}}
\newcommand{\vmin}[0]{v_{\min}}
\newcommand{\vnat}[0]{v_{\mathrm{nat}}}
\DeclareMathOperator{\Char}{char}
\DeclareMathOperator{\ac}{ac}
\DeclareMathOperator{\cl}{cl}
\DeclareMathOperator{\dcl}{dcl}
\DeclareMathOperator{\Th}{Th}
\DeclareMathOperator{\dprk}{dp-rk}
\DeclareMathOperator{\td}{td}
\begin{document}

	\begin{abstract}
		We investigate what henselian valuations on ordered fields are definable in the language of ordered rings. This leads towards a systematic study of the class of ordered fields which are dense in their real closure. Some results have connections to recent conjectures on definability of henselian valuations in strongly NIP fields. Moreover, we obtain a complete characterisation of strongly NIP almost real closed fields.
	\end{abstract}

	\title[On Strongly NIP Ordered Fields]{On Strongly NIP Ordered Fields and Definable Convex Valuations}
	
	\author[L.~S.~Krapp]{Lothar Sebastian Krapp}
	\author[S.~Kuhlmann]{Salma Kuhlmann}
	\author[G.~Lehéricy]{Gabriel Lehéricy}
	
	\address{Fachbereich Mathematik und Statistik, Universität Konstanz, 78457 Konstanz, Germany}
	\email{sebastian.krapp@uni-konstanz.de}
	
	\address{Fachbereich Mathematik und Statistik, Universität Konstanz, 78457 Konstanz, Germany}
	\email{salma.kuhlmann@uni-konstanz.de}
	
	\address{École supérieure d'ingénieurs Léonard-de-Vinci, Pôle Universitaire Lé\-o\-nard de Vinci, 92 916 Paris La Défense Cedex, France}
	\email{gabriel.lehericy@imj-prg.fr}
	
	\date{5 February 2019}
	
	\cleanlookdateon
	
	\maketitle
	
	\tableofcontents

	\section{Introduction}
	
	Let $\L_{\mathrm{r}} = \{+,-,\cdot,0,1\}$ be the language of rings, $\L_{\mathrm{or}} = \L_{\mathrm{r}} \cup \{<\}$  the language of ordered rings and $\L_{\mathrm{og}} = \{+,0,<\}$ the language of ordered groups. Throughout this work, we will abbreviate the $\L_{\mathrm{r}}$-structure of a field $(K,+,-,\cdot,0,1)$ simply by $K$, the $\L_{\mathrm{or}}$-structure of an ordered field $(K,+,-,\cdot,0,1,<)$ by $(K,<)$ and the $\L_{\mathrm{og}}$-structure of an ordered group $(G,+,0,<)$ by $G$.
	
	The following conjecture is due to Shelah--Hasson (see \cite{shelah,dupont,halevi}):

	\begin{conjecture}\thlabel{conj:shelah-hasson}
		Let $K$ be an infinite strongly NIP field. Then $K$ is either real closed, or algebraically closed, or admits a non-trivial $\L_{\mathrm{r}}$-definable\footnote{Throughout this work \emph{definable} always means \emph{definable with parameters}. } henselian valuation.
	\end{conjecture}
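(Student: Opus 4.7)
\emph{Overall plan.} Since this is an open conjecture, any plan is necessarily aspirational; I would split the analysis on whether $K$ admits a field ordering and try to combine tools from the Jahnke--Koenigsmann--Johnson line on NIP fields with valuation-theoretic tools specific to ordered fields. In the non-orderable case, if $K$ is not separably closed, one tries to extract from strong NIP a non-trivial henselian valuation and then, using that $\dprk(K)$ is finite, argue that a canonical choice of this valuation becomes $\L_{\mathrm{r}}$-definable with parameters.

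\emph{The orderable case.} Fix an ordering $<$ making $(K,<)$ an ordered field. The decisive advantage here is that every convex subring of $K$ is $\L_{\mathrm{or}}$-definable by an extremely simple formula, so it suffices to produce a non-trivial proper convex henselian valuation ring. The natural first candidate is the convex hull $\OO_{\mathrm{fin}}$ of $\Z$ in $K$; this is a henselian valuation ring whenever $(K,<)$ is not dense in its real closure. To upgrade from $\L_{\mathrm{or}}$- to $\L_{\mathrm{r}}$-definability, one uses that in a strongly NIP ordered field the positive cone becomes $\L_{\mathrm{r}}$-definable with parameters as soon as one can pin down some $\L_{\mathrm{r}}$-definable henselian valuation with real closed (or at least sufficiently tame) residue field; this is exactly what the convex construction provides, after checking the residue field inherits strong NIP and hence is forced to be real closed.

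\emph{Main obstacle.} The residual and genuinely hard case is that of ordered fields $(K,<)$ dense in their real closure but not themselves real closed: then $\OO_{\mathrm{fin}} = K$, every convex candidate is trivial, and no off-the-shelf construction produces a non-trivial $\L_{\mathrm{r}}$-definable henselian valuation. Overcoming this requires a systematic understanding of ordered fields dense in their real closure, together with a characterisation of strongly NIP almost real closed fields, in order either to show that strong NIP forces such a $(K,<)$ to be real closed, or to extract a non-convex definable henselian valuation by more delicate means (for instance, by exploiting interactions between dp-rank, the value group, and $p$-divisibility phenomena). This is precisely the programme undertaken in the present paper, and a full resolution of the conjecture in the orderable case evidently requires additional ideas beyond what this programme currently delivers.
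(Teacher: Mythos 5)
The statement you are addressing is \thref{conj:shelah-hasson}, which the paper states as an \emph{open conjecture} due to Shelah--Hasson and does not prove; the paper only specialises it to ordered fields (\thref{conj:main}, \thref{conj:classification}), proves those specialisations equivalent (\thref{thm:main}), and establishes partial results (the dp-minimal case and the almost real closed case). So there is no proof in the paper to compare yours against, and your proposal --- as you acknowledge --- is a programme sketch rather than a proof; it cannot be accepted as a proof of the statement.

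Beyond that meta-point, several concrete assertions in your sketch are false and would derail even the programme. First, it is not true that every convex subring of an ordered field is $\Lor$-definable ``by an extremely simple formula'': convexity is not a definability condition, and the convex hull $\OO_{\mathrm{fin}}$ of $\Z$ is the union $\bigcup_{n}[-n,n]$, which is in general not definable. Producing a non-trivial $\Lor$-definable convex valuation ring is exactly the non-trivial content of \thref{constr:val} (cf.\ \thref{prop:defval}), and by \thref{rmk:mainrmk}~(1) a real closed field admits \emph{no} non-trivial definable convex valuation, so the claim cannot be right as stated. Second, $\OO_{\mathrm{fin}}$ is the natural valuation ring, which need not be henselian; whether convex valuations on strongly NIP ordered fields are automatically henselian is precisely one of the equivalent open reformulations of \thref{conj:classification} in the paper's concluding observation, so you may not assume it. Moreover $\OO_{\mathrm{fin}}=K$ if and only if $K$ is archimedean, not if and only if $K$ is dense in $\rc{K}$: \thref{prop:ordfieldnoarchmodel} exhibits a field dense in its real closure with no archimedean model, and any non-archimedean real closed field is trivially dense in its real closure while $\OO_{\mathrm{fin}}\neq K$. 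Third, the proposed upgrade from $\Lor$- to $\Lr$-definability requires both an already $\Lr$-definable henselian valuation and hypotheses on the value group such as $2$-divisibility (\thref{lem:orderingdef2}); and the step ``the residue field inherits strong NIP and hence is forced to be real closed'' presupposes the conjecture for the residue field, so that part of the argument is circular. The genuinely hard case you isolate at the end (fields dense in their real closure but not real closed) is correctly identified as the open core of the problem, but that is a restatement of the difficulty, not a step towards resolving it.
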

	
	By adapting the characterisation of dp-minimal fields in \cite{johnson}, Halevi, Hasson and Jahnke \cite{halevi} obtain a conjectural classification of strongly NIP fields in the language $\Lr$ which is equivalent to \thref{conj:shelah-hasson} (cf. \cite[Conjecture~1.3]{halevi}).

	Note that in ordered fields, henselian valuations are always convex by the following fact.
	
	\begin{fact}\emph{(See \cite[Lemma~2.1]{knebusch}.)}\thlabel{fact:hensconv}
		Let $(K,<)$ be an ordered field and $v$ a henselian valuation on $K$. Then $v$ is convex on $(K,<)$.
	\end{fact}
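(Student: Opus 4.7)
\smallskip

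The plan is to reduce convexity of $\OO_v$ to a positivity statement and then invoke Hensel's Lemma uniformly in the residue characteristic. First I would reformulate convexity as the statement that $|y| < 1$ for every $y \in \mathfrak{m}_v$, which follows by passing to reciprocals: $v(x) < 0$ is equivalent to $1/x \in \mathfrak{m}_v$, so $\OO_v$ is convex precisely when $|1/x| > 1$ for every $x$ with $v(x) < 0$, i.e.\ every element of $\mathfrak{m}_v$ lies in the open interval $(-1,1)$.

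The heart of the argument is to show that $1 + 4z$ is a square in $K$ for every $z \in \mathfrak{m}_v$. I would apply the strong form of Hensel's Lemma to $f(T) = T^2 - (1+4z) \in \OO_v[T]$ at the approximation $T = 1$: since $f(1) = -4z$ and $f'(1) = 2$, we have $v(f(1)) = 2v(2) + v(z)$ and $v(f'(1)) = v(2)$, so the hypothesis $v(f(1)) > 2v(f'(1))$ reduces to $v(z) > 0$, which holds by assumption. Hence there exists $b \in \OO_v$ with $b^2 = 1 + 4z$.

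Assuming for contradiction that $v$ is not convex, the reformulation produces some $y \in \mathfrak{m}_v$ with $|y| \geq 1$, and up to sign we may take $y \geq 1$; because $v(1) = 0 < v(y)$ we in fact have $y > 1$. Setting $z := -y \in \mathfrak{m}_v$, the claim above gives $b \in K$ with $b^2 = 1 - 4y \leq -3 < 0$, contradicting that squares are non-negative in $(K,<)$. The main obstacle I anticipate is uniformity in the residue characteristic: applying Hensel directly to $T^2 - (1-y)$, whose residue $T^2 - 1$ factors as $(T-1)(T+1)$ with distinct simple roots, is clean when $\Char(K_v) \neq 2$, but collapses in characteristic $2$ where $-1 = 1$. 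Inserting the factor $4$ is precisely the adjustment that makes the strong-form Hensel inequality $v(f(1)) - 2v(f'(1)) = v(z) > 0$ hold independently of the value of $v(2)$.
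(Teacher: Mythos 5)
The paper states this as a Fact cited from Knebusch--Wright and gives no proof of its own, so there is nothing to compare against; on its own terms your argument is correct and is essentially the classical proof (reduce convexity to $\MM_v\subseteq(-1,1)$, use Hensel on $T^2-(1+4z)$ to see that $1+4\MM_v$ consists of squares, hence of non-negative elements, with the factor $4$ absorbing the case $\Char(Kv)=2$). Two small points to tidy up. First, there is a sign slip in the reformulation: $\OO_v$ is convex precisely when $|x|>1$ for every $x$ with $v(x)<0$, i.e.\ $|1/x|<1$, not $|1/x|>1$; your final phrasing ($\MM_v\subseteq(-1,1)$) is the correct one and is what you actually use. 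Second, the equivalence itself deserves one more line in the direction you need: knowing only that every $z\notin\OO_v$ satisfies $|z|>1$ does not immediately preclude $a<z<b$ with $a,b\in\OO_v$ all of absolute value greater than $1$; you should add that for such a configuration (say $z>1$ and $b\in\OO_v$ with $b>z$) one has $v(b/z)>0$, so $b/z\in\MM_v$, so $b<z$, a contradiction. With that line inserted the proof is complete; the Hensel step (Newton's lemma at $a=1$ with $v(f(1))=2v(2)+v(z)>2v(2)=2v(f'(1))$, legitimate since $\Char K=0$ makes $f'(1)=2\neq 0$) and the final contradiction $b^2=1-4y\leq -3<0$ are both correct.
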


	We specialise \thref{conj:shelah-hasson} to ordered fields and enhance it as follows.

	\begin{restatable}{conjecture}{mainconjecture}
	\thlabel{conj:main}
		Let $(K,<)$ be a strongly NIP ordered field. Then $K$ is either real closed or admits a non-trivial $\Lor$-definable henselian valuation.
	\end{restatable}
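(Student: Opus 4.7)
Since strong NIP is preserved under reducts, the field $K$ viewed as an $\Lr$-structure is strongly NIP. Any ordered field is formally real and thus not algebraically closed. Hence, granting \thref{conj:shelah-hasson}, either $K$ is real closed or $K$ admits a non-trivial $\Lr$-definable henselian valuation; since $\Lr \subseteq \Lor$, such a valuation is automatically $\Lor$-definable, so \thref{conj:main} reduces to \thref{conj:shelah-hasson}. My first step would be to record this reduction, after which any \emph{unconditional} progress must exploit the order.

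For a direct approach I would lean on \thref{fact:hensconv}: every henselian valuation on $(K,<)$ is convex, so candidate valuation rings form a chain of convex subrings of $K$, with smallest element the natural valuation ring $\OO_{\vnat}$ (the convex hull of $\Z$). A convex subring is $\Lor$-definable precisely when its positive part is a definable cut in $K_{>0}$. I would then split into cases according to whether $\vnat$ is itself henselian. If yes, with non-trivial residue field, one tries to pin down $\OO_{\vnat}$ by an $\Lor$-formula via the archimedean cut; if no, one exploits strong NIP to locate a coarsening of $\vnat$ that is henselian and to exhibit its valuation ring as a definable cut, typically by realising the residue field as a definable real closed quotient.

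The main obstacle is the final step: moving from \emph{existence} of a non-trivial henselian convex valuation to its $\Lor$-\emph{definability}. This seems to require a structural classification in the spirit of Johnson's theorem on dp-minimal fields \cite{johnson} adapted to the ordered setting, together with the ingredient—flagged in the abstract—that strongly NIP ordered fields are dense in their real closure, since this density is precisely what makes convex henselian valuations first-order visible in $\Lor$. Without such a classification I would expect to settle \thref{conj:main} only within natural subclasses, such as the almost real closed fields announced in the abstract, where the valuation-theoretic structure is rigid enough to produce a defining formula by hand.
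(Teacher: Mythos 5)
The statement you are asked to prove is \thref{conj:main}, which is a \emph{conjecture}: the paper offers no proof of it, and indeed proving it would settle a specialisation of the Shelah--Hasson conjecture. What the paper actually does is (i) observe, exactly as you do, that \thref{conj:main} is the restriction of \thref{conj:shelah-hasson} to ordered fields (strong NIP passes to the reduct $K$, orderable fields are not algebraically closed, and $\Lr$-definable implies $\Lor$-definable); (ii) prove in \thref{thm:main} that \thref{conj:main} is \emph{equivalent} to \thref{conj:classification} (every strongly NIP ordered field is almost real closed), using \thref{conj2impconj1} and \thref{prop:conj1toconj2}; and (iii) verify the conjecture in special cases, e.g.\ for dp-minimal ordered fields (\thref{prop:dpminarc}) and for almost real closed fields (\thref{thm:arcf}). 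Your reduction in the first paragraph is correct and coincides with the paper's framing, and you correctly identify the genuine obstacle — passing from the \emph{existence} of a non-trivial henselian valuation to its $\Lor$-\emph{definability} — but you do not close it, so there is no proof here to compare beyond the set-up.

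One substantive point in your third paragraph is backwards relative to the paper's results. You write that density of a strongly NIP ordered field in its real closure ``is precisely what makes convex henselian valuations first-order visible in $\Lor$''. The mechanism runs the other way: by \thref{prop:defval} and \thref{constr:val}, it is the \emph{failure} of density of $K$ in $\rc{K}$ that produces a non-trivial $\Lor$-definable convex valuation, and an almost real closed field that is not real closed is never dense in its real closure (\thref{rmk:denseimmediatefield}). Density in the real closure is useful in \thref{thm:defval} and \thref{cor:kdensedef} only when applied to the \emph{residue field} $Lw$ of the valuation one wishes to define, not to $K$ itself; and whether a non-real-closed strongly NIP ordered field can be dense in its real closure is left open in the paper (\thref{qu:snipdenserc}), the expected answer under \thref{conj:classification} being no. Any direct attack along the lines you sketch should therefore aim to show that a strongly NIP ordered field that is not real closed fails to be dense in $\rc{K}$, or else satisfies one of the hypotheses of \thref{thm:defval} for a suitable henselian $w$.
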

	
	
	Note that there are ordered fields which admit a non-trivial $\Lor$-definable henselian valuation but are not NIP (see \thref{rmk:mainrmk}~(\ref{rmk:mainrmk:2})). 
	\thref{conj:main} motivates the study of non-trivial $\Lor$-definable henselian valuations on a given ordered field. Moreover, we can reformulate \thref{conj:main} in terms of the model theoretically well-studied class of almost real closed fields (cf. \cite{delon}, see also \thref{def:arc}). 
		
	\begin{restatable}{conjecture}{classificationconjecture}
		\thlabel{conj:classification}
		Any strongly NIP ordered field is almost real closed.
	\end{restatable}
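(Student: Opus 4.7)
My plan is to reduce \thref{conj:classification} to \thref{conj:main} by an induction on dp-rank, using that the residue field of an $\Lor$-definable henselian valuation inherits strong NIP with strictly smaller dp-rank. Concretely, I aim to show: if \thref{conj:main} holds, then any strongly NIP ordered field $(K,<)$ is almost real closed.

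The base case is $\dprk(K,<) = 1$, so that $(K,<)$ is dp-minimal; here the existing classification of dp-minimal ordered fields yields that $K$ is either real closed or admits a henselian valuation with real closed residue field, and $(K,<)$ is almost real closed in either case.

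For the inductive step with $\dprk(K,<) = n > 1$, either $K$ is real closed (and $(K,<)$ is trivially almost real closed via the trivial valuation), or, by \thref{conj:main}, $K$ admits a non-trivial $\Lor$-definable henselian valuation $v$, which is convex by \thref{fact:hensconv}. The ordered residue field $(\ol{K},\ol{<})$ is then interpretable in $(K,<)$ and hence strongly NIP. An Ax--Kochen--Ershov-style decomposition of dp-rank for henselian valued fields gives $\dprk(K) = \dprk(vK) + \dprk(\ol{K})$, and the non-triviality of $v$ forces $\dprk(\ol{K}) < \dprk(K)$. The induction hypothesis produces a henselian valuation $\ol{w}$ on $\ol{K}$ with real closed residue field, and the composite $\ol{w} \circ v$ is then a henselian valuation on $K$ with real closed residue field, so $(K,<)$ is almost real closed.

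The principal obstacle is that \thref{conj:main} is itself open, being a specialisation of the Shelah--Hasson programme; the argument above therefore only establishes the equivalence of \thref{conj:main} and \thref{conj:classification} rather than an unconditional proof. On the technical side, one must verify (i) finiteness of dp-rank for strongly NIP fields, and (ii) the dp-rank additivity above, which hinges on $v$ being $\Lor$-definable so that the enriched structure $(K,<,v)$ is bi-interpretable with $(K,<)$. The transfer of strong NIP to interpretable structures and the fact that compositions of henselian valuations are henselian are standard, and I do not anticipate difficulty there.
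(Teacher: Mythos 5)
First, note that the statement you are asked about is a \emph{conjecture}: the paper does not prove it, but only establishes (in \thref{thm:main}) that it is equivalent to \thref{conj:main}. You correctly recognise that your argument can only yield the implication from \thref{conj:main} to \thref{conj:classification}, so in spirit you are reproducing one half of \thref{thm:main}. However, your route has a genuine gap. The inductive step rests on the claimed additivity $\dprk(K) = \dprk(vK) + \dprk(Kv)$ for a henselian valuation $v$, from which you deduce that the residue field has strictly smaller dp-rank. This additivity is false. By \thref{fact:dpfield}, the ordered field $(\R\pow{\Z},<)$ is dp-minimal, i.e.\ of dp-rank $1$, while its value group $\Z$ and its residue field $\R$ under $\vmin$ each have dp-rank $1$; additivity would force dp-rank $2$. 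The correct general statements are subadditivity-type bounds, and in particular a non-trivial definable henselian valuation need not strictly decrease the dp-rank of the residue field, so your induction is not known to terminate.

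The paper's own argument (\thref{conj2impconj1}) avoids dp-rank entirely and is strictly simpler: if $K$ is not real closed, take $v$ to be the \emph{canonical} valuation, i.e.\ the finest henselian valuation on $K$ (which is non-trivial by \thref{conj:main}). By \thref{lem:strongnipfieldresidue} the residue field $(Kv,<)$ is strongly NIP, and it cannot admit any non-trivial henselian valuation, since such a valuation would compose with $v$ to give a strictly finer henselian valuation on $K$. Applying \thref{conj:main} to $(Kv,<)$ then forces $Kv$ to be real closed in a single step, with no rank bookkeeping and no need for the dp-minimal base case. (Note also that this argument only needs the weaker hypothesis that every strongly NIP ordered field is real closed or admits \emph{some} non-trivial henselian valuation, not necessarily a definable one.) I would recommend replacing your induction by this maximal-valuation argument; the ingredients you already invoke --- preservation of strong NIP under passing to the residue field, and henselianity of composites --- are exactly what it requires.
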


	In \Autoref{sec:prelim} we gather some general basic preliminaries. In \Autoref{sec:densdivhull} we start with some preliminaries on ordered abelian groups with particular focus on algebraic and valuation theoretic properties of ordered abelian groups which are dense in their divisible hull. Results of this section are mainly used in \Autoref{sec:defconvval} and \Autoref{sec:density}. In \Autoref{sec:defconvval} we study $\Lor$-definable henselian valuations in ordered fields. The main result of \Autoref{subsec:def} gives sufficient conditions on the residue field and the value group of a henselian valuation $v$ in order that $v$ is $\Lor$-definable (see \thref{thm:defval}). We then proceed by comparing \thref{thm:defval} to known $\Lr$-definability results of henselian valuations.  Special emphasis is put on definable valuations in almost real closed fields in \Autoref{sec:arc}. In this subsection, we firstly prove analogues in the language of ordered rings to known model theoretic results about almost real closed fields in the language of rings, and secondly we compare $\Lr$- and $\Lor$-definability of henselian valuations in almost real closed fields. The results of \Autoref{sec:densdivhull} on ordered abelian groups which are dense in their divisible hull and the conditions of \thref{thm:defval} motivate the study of ordered fields which are dense in their real closure. In \Autoref{sec:density} we will show that the property of an ordered field to be dense in its real closure is preserved under elementary equivalence (see \thref{thm:densitytransfers}). Our investigation of strongly NIP ordered fields starts in \Autoref{sec:conjecturalclassification}. We begin by focussing on dp-minimal ordered fields (which are, in particular, strongly NIP). By careful analysis of the results of \cite{jahnke}, we deduce in \thref{prop:dpminarc} that an ordered field is dp-minimal if and only if it is almost real closed with respect to some dp-minimal ordered abelian group $G$. Thereafter, we address the following query (\textbf{classification of strongly NIP ordered fields}): 
	
	\begin{query}
		An ordered field is strongly NIP if and only if it is almost real closed with respect to some strongly NIP ordered abelian group $G$. \hfill\stepcounter{equation}\emph{(\decimal{section}.\decimal{equation})}\labeltext{(\decimal{section}.\decimal{equation})}{query}
	\end{query}
	In \thref{thm:arcf} we show that an almost real closed field with respect to some ordered abelian group $G$ is strongly NIP if and only if $G$ is strongly NIP. This settles the backward direction of the above query, and reduces its forward direction to \thref{conj:classification}.
	In \Autoref{sec:conclusion}, we show that \thref{conj:main} and \thref{conj:classification} are equivalent (see \thref{thm:main}).
	In \Autoref{sec:questions}, we conclude by stating some open questions motivated by this work.
	
	Except for minor cross-references, \Autoref{sec:conjecturalclassification} and \Autoref{sec:conclusion} do not rely on the other sections and can be read independently.
	
	\section{General Preliminaries}\label{sec:prelim}

	All notions on valued fields and groups can be found in \cite{kuhlmann,engler} and all notions on strongly NIP theories in \cite{simon}. The set of natural numbers with $0$ will be denoted by $\N_0$, the set of natural numbers without $0$ by $\N$.
	
	Let $K$ be a field and $v$ a valuation on $K$. We denote the \textbf{valuation ring} of $v$ in $K$ by $\OO_v$, the \textbf{valuation ideal}, i.e.  the maximal ideal of $\OO_v$, by $\MM_v$, the \textbf{ordered value group} by $vK$ and the \textbf{residue field} $\OO_v/\MM_v$ by $Kv$. For $a \in \OO_v$ we also denote $a + \MM_v$ by $\ol{a}$. For an ordered field $(K,<)$ a valuation is called \textbf{convex} (in $(K,<)$) if the valuation ring $\OO_v$ is a convex subset of $K$. In this case, the relation $\ol{a} < \ol{b} : \Leftrightarrow \ol{a} \neq \ol{b} \wedge a < b$ defines an order relation on $Kv$ making it an ordered field. 
	
	Let $\Lvf = \Lr\cup \{\OO_v\}$ be the \textbf{language of valued fields}, where $\OO_v$ stands for a unary predicate. Let $(K,\OO_v)$ be a valued field. An atomic formula of the form $v(t_1) \geq v(t_2)$, where $t_1$ and $t_2$ are $\Lr$-terms, stands for the $\Lvf$-formula $t_1=t_2=0 \vee (t_2\neq 0 \wedge \OO_v(t_1/t_2))$. Thus, by abuse of notation, we also denote the $\Lvf$-structure $(K,\OO_v)$ by $(K,v)$. Similarly, we also call $(K,<,v)$ an ordered valued field. 
	We say that a valuation $v$ is $\L$-definable for some language $\L \in \{\Lr,\Lor\}$ if its valuation ring is an $\L$-definable subset of $K$.
	
	Let $K$ be a field and $v,w$ be valuations on $K$. We write $v\leq w$ if and only if $\OO_v \supseteq \OO_w$. In this case we say that $w$ is \textbf{finer than $v$} and $v$ is \textbf{coarser than $v$}. If $\OO_v \supsetneq \OO_w$, we write $v< w$ and say that $w$ is \textbf{strictly finer} than $v$ and that $v$ is \textbf{strictly coarser} than $w$. Note that $\leq$ defines an order relation on the set of convex valuations of an ordered field. 
	We call two elements $a,b \in K$ \textbf{archimedean equivalent} (in symbols $a \sim b$) if there is some $n\in \N$ such that $|a|<n|b|$ and $|b|<n|a|$. Let $G = \{[a] \mid a \in K^\times\}$ the set of archimedean equivalence classes of $K^\times$. Equipped with addition $[a]+[b] = [ab]$ and the ordering $[a] < [b] : \Leftrightarrow a \not\sim b \wedge |b| < |a|$, the set $G$ becomes an ordered abelian group. Then $v: K^\times \to G$ defines a convex valuation on $K$. This is called the \textbf{natural valuation} on $K$, denoted by $\vnat$. We say that an extension of ordered fields $(K,<) \subseteq (L,<)$ is \textbf{immediate} if it is immediate\footnote{For the definition of an immediate extension of valued fields, see \cite[p.~27]{kuhlmann}} with respect to the natural valuation. The extension is \textbf{dense} if $K$ is dense in $L$.
	
	Let $(k,<)$ be an ordered field and $G$ an ordered abelian group. We denote the \textbf{ordered Hahn field} with coefficients in $k$ and exponents in $G$ by $k\pow{G}$. We denote an element $s \in k\pow{G}$ by $s = \sum_{g \in G} s_gt^g$, where $s_g = s(g)$ and $t^g$ is the characteristic function on $G$ mapping $g$ to $1$ and everything else to $0$. The ordering on $k\pow{G}$ is given by $s > 0 : \Leftrightarrow s(\min \supp s) > 0$, where $\supp s = \{g \in G \mid s(g) \neq 0\}$ is the \textbf{support of $s$}. Let $\vmin$ be the valuation on $k\pow{G}$ given by $\vmin(s) = \min \supp s$ for $s \neq 0$. Note that $\vmin$ is convex and henselian. Note further that if $k$ is archimedean, then $\vmin$ coincides with $\vnat$.
	
	We will repeatedly use the Ax--Kochen--Ershov principle for ordered fields (cf. \cite[Corollary~4.2(iii)]{farre}).
	
	\begin{fact}[Ax--Kochen--Ershov principle]
		Let $(K, <,v)$ and $(L,<,w)$ be two henselian ordered valued fields. Then $(Kv,<) \equiv (Lw,<)$ and $vK \equiv wL$ if and only if $(K,<, v) \equiv (L,<, w)$.
	\end{fact}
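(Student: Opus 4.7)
The plan is to establish this Ax--Kochen--Ershov principle by adapting the classical equicharacteristic-$0$ back-and-forth argument to the ordered setting; since any ordered field has characteristic $0$, there are no residue characteristic obstructions. The forward direction is immediate: by \thref{fact:hensconv} both $v$ and $w$ are convex, hence $(Kv,<)$ and $vK$ are uniformly interpretable in $(K,<,v)$ (and analogously on the $L$ side), so elementary equivalence descends from $(K,<,v) \equiv (L,<,w)$ to both invariants.

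For the converse, I would pass to $\aleph_1$-saturated elementary extensions $(K^*,<,v^*) \succeq (K,<,v)$ and $(L^*,<,w^*) \succeq (L,<,w)$; the hypotheses transfer to give $(K^*v^*,<) \equiv (L^*w^*,<)$ and $v^*K^* \equiv w^*L^*$. I would then build a back-and-forth system of partial $\Lovf$-isomorphisms $f \colon A \to B$ between small ordered henselian valued subfields, extending by one element at a time. Given $a \in K^* \setminus A$: if $a$ is algebraic over $A$, lift via henselianity of $L^*$ applied to the minimal polynomial transported by $f$; if $v^*(a) \notin v^*A$, use saturation of $L^*$ together with $v^*K^* \equiv w^*L^*$ to realise the required value; if $v^*(a) \in v^*A$ but the residue $\overline{ac^{-1}}$ is transcendental over $Av^*$ for a suitable $c \in A^\times$, use $(K^*v^*,<) \equiv (L^*w^*,<)$ and saturation to locate a matching $b$; finally, if $a$ generates a proper immediate extension of $A$, realise it as a pseudo-Cauchy limit of a sequence from $A$ and reproduce such a limit in $L^*$ by henselianity.

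The main obstacle is order preservation, which the classical Ax--Kochen--Ershov argument does not by itself provide. The key observation removing this is that by convexity of $v^*$, the order-type of $a$ over $A$ is completely determined by the position of $v^*(a)$ in $v^*A$ together with, whenever $v^*(a) = v^*(c)$ for some $c \in A^\times$, the sign of the residue $\overline{ac^{-1}} \in K^*v^*$. Hence preserving the ordered value group and ordered residue field data at each back-and-forth step automatically forces preservation of $<$, and the whole argument collapses to a standard Ax--Kochen--Ershov back-and-forth run over the \emph{ordered} invariants. Farré's \cite[Corollary~4.2(iii)]{farre} executes exactly this strategy in greater generality.
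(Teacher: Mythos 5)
The paper does not actually prove this statement: it is imported as a black box from Farré \cite[Corollary~4.2(iii)]{farre}, so there is no internal proof to compare yours against. Your overall strategy --- interpretability of the two ordered invariants for the easy direction (using convexity of henselian valuations), and a saturated back-and-forth over the \emph{ordered} residue field and value group for the hard one --- is the standard route and is essentially what Farré carries out.

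However, the ``key observation'' you rely on to get order preservation for free is false as stated, and the step it is meant to justify fails if followed literally. Take $A = \Q$ (on which any convex valuation restricts trivially) and let $a$ be a positive infinitesimal and $a'$ a negative infinitesimal in $K^*$. Then $v^*(a) = v^*(a') \notin v^*A$, both determine the same cut of $v^*A$, and there is no $c \in A^\times$ with $v^*(c) = v^*(a)$; yet $a$ and $a'$ realise different cuts over $A$. So in the case $v^*(a) \notin v^*A$ you must additionally carry over the sign of $a$ itself, and your recipe ``realise the required value'' does not do this (the fix is harmless: replace the candidate $b$ by $-b$ if necessary). Likewise, when $v^*(a) = v^*(c)$ with $\overline{ac^{-1}}$ residually transcendental, the \emph{sign} of $\overline{ac^{-1}}$ does not determine the cut of $a$ over $A$; one needs the full cut (indeed the full ordered-field type) of $\overline{ac^{-1}}$ over $Av^*$, which is what $(K^*v^*,<) \equiv (L^*w^*,<)$ together with saturation actually supplies. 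The correct observation is that the cut of $a$ over $A$ is determined by the cut of $v^*(a)$ over $v^*A$, the sign of $a$, the cut of $\overline{ac^{-1}}$ over $Av^*$ when $v^*(a) \in v^*A$, and the pseudo-Cauchy sequence in the immediate case --- all data the back-and-forth can be arranged to preserve; this Baer--Krull-type bookkeeping is precisely what separates the ordered statement from the plain equicharacteristic-$0$ one. (A smaller point: extending across algebraic elements is not just ``henselianity applied to the minimal polynomial''; one first closes $A$ under henselisation inside $K^*$, which is an immediate extension, after which algebraic extensions are governed by the residue field and value group extensions.)
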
 
	
	Let $\L$ be a language and $T$ an $\L$-theory. We fix a monster model $\MM$ of $T$. Let $\varphi(\ul{x};\ul{y})$ be an $\L$-formula. We say that \textbf{$\varphi$ has the independence property (IP)} if there are $(\ul{a}_i)_{i \in \omega}$ and $(\ul{b}_J)_{J \subseteq \omega}$ in $M$ such that $\MM \models \varphi(\ul{a}_i;\ul{b}_J)$ if and only if $i \in J$. We say that the theory \textbf{$T$ has IP} if there is some formula $\varphi$ which has IP. If $T$ does not have IP, it is called \textbf{NIP} (\emph{not the independence property}). For an $\L$-structure $\NN$, we also say that \textbf{$\NN$ is NIP} if its complete theory $\Th(\NN)$ is NIP. A well-known example of an IP theory is the complete theory of the $\Lr$-structure $(\Z,+,-,\cdot,0,1)$ (cf. \cite[Example~2.4]{simon}). Since $\Z$ is parameter-free definable in the $\Lr$-structure $\Q$ (cf. \cite[Theorem~3.1]{robinson2}), also the complete $\Lr$-theory of $\Q$ has IP.
	
	Let $A \subseteq M$ be a set of parameters, $\Delta$ a set of $\L$-formulas and $(J,<)$ a linearly ordered set. A sequence $S = (a_j \mid j\in J)$ in $M$ is \textbf{$\Delta$-indiscernible over $A$} if for every $k \in \N$, any increasing tuples $i_1<\ldots<i_k$ and $j_1<\ldots<j_k$ in $J$, any formula $\varphi({x_1},\ldots,{x_k};\ul{y})\in \Delta$ and any tuple $\ul{b} \in A$, we have $\MM\models \varphi(a_{i_1},\ldots,a_{i_k};\ul{b}) \leftrightarrow \varphi(a_{j_1},\ldots,a_{j_k};\ul{b})$. The sequence $S$ is called \textbf{indiscernible over $A$} if it is $\Delta$-indiscernible over $A$ for any set of $\L$-formulas $\Delta$. A family of sequences $(S_t \mid t \in X)$ is called \textbf{mutually indiscernible over $A$} if for each $u \in X$, the sequence $S_u$ is indiscernible over $A \cup \bigcup_{t \in X \setminus\{u\}}S_t$.
	
	Let $p$ be a partial $n$-type over a set $A \subseteq M$. We define the \textbf{dp-rank of $p$ over $A$} as follows: Let $\kappa$ be a cardinal. The dp-rank of $p$ over $A$ is less than $\kappa$ (in symbols, $\dprk(p,A) < \kappa$) if for every family $(S_t \mid t< \kappa)$ of mutually indiscernible sequences over $A$ and any $\ul{b} \in M^n$ realising $p$ in $\MM$, there is some $t < \kappa$ such that $S_t$ is indiscernible over $A \cup \{b_1,\ldots,b_n\}$. The theory $T$ is called \textbf{strongly NIP} if it is NIP and $\dprk(\{x=x\},\emptyset) < \aleph_0$, where $\{x=x\}$ is the partial type over $\emptyset$ only consisting of the formula $x = x$. The theory $T$ is called \textbf{dp-minimal} if it is NIP and $\dprk(\{x=x\},\emptyset) = 1$.
	Again, we call an $\L$-structure $\NN$ strongly NIP (respectively dp-minimal) if $\Th(\NN)$ is strongly NIP (respectively dp-minimal). 
	
	Any reduct of a strongly NIP structure is strongly NIP (cf. \cite[Claim~3.14, 3)]{shelah2}) and any reduct of a dp-minimal structure is dp-minimal (cf. \cite[Observation~3.7]{onshuus}).
	Since any weakly o-minimal theory is dp-minimal (cf. \cite[Corollary~4.3]{dolich}), we obtain the following hierarchy:
	
	\textbf{o-minimal $\to$ weakly o-minimal $\to$ dp-minimal $\to$ strongly NIP $\to$ NIP}
	
	In particular, any divisible ordered abelian group and any real closed field are strongly NIP.

	\section{Preliminaries on Ordered Abelian Groups}\label{sec:densdivhull}
	
	In this section, we will present some preliminary results on ordered abelian groups which will be used throughout the other sections.
	
	Let $G$ and $H$ be ordered abelian groups with $G \subseteq H$. We say that \textbf{$G$ is dense in $H$} or that the extension $G \subseteq H$ is dense if for any $a,b \in H$ there is $c \in G$ such that $a<c<b$.
	If $\Z$ is a convex subgroup of $G$, then we say that $G$ is \textbf{discretely ordered}. Throughout this section, we denote the \textbf{natural valuation} on a given ordered abelian group by $v$.\footnote{See \cite[p.~9]{kuhlmann} for a definition of the natural valuation, noting that an ordered abelian group is an ordered $\Z$-module.} The value set $vG$ is ordered by $v(g_1) < v(g_2)$ if $v(g_1) \neq v(g_2)$ and $|g_1| > |g_2|$. We say that an extension of ordered abelian groups $G \subseteq H$ is \textbf{immediate} if it is immediate with respect to the natural valuation.\footnote{See \cite[p.~3]{kuhlmann} for a definition of an immediate extension.}
	Let $\gamma \in vG$, and let $G^\gamma$ and $G_\gamma$ be the following convex subgroups of $G$: $$G^\gamma = \setbr{g \in G \mid v(g) \geq \gamma} \text{ and } G_\gamma = \setbr{g \in G \mid v(g) > \gamma}.$$
	The \textbf{archimedean component} $B(G,\gamma)$ of $G$ corresponding to $\gamma$ is given by $B(G,\gamma) = G^\gamma/G_\gamma$. If no confusion arises, we will only write $B_\gamma$ instead of $B(G,\gamma)$. Note that $B_\gamma$ is an ordered group with the order induced by $G$.
	A valuation $w$ on $G$ is \textbf{convex} if for any $g_1,g_2 \in G$ with $0<g_1\leq g_2$, we have $w(g_1) \geq  w(g_2)$. Note that there is a one-to-one correspondence between non-trivial convex subgroups of $G$ and final segments of $vG$ (see \cite[p.~50~f.]{kuhlmann}).
	
	We say that $G$ has a \textbf{left-sided limit point} $g_0$ in $H$ if for any  $g_1 \in H$ with $g_1>0$ the intersection of $(g_0-g_1,g_0)$ with $G$ is non-empty. Similarly, $g_0$ is a \textbf{right-sided limit point} if for any $g_1 \in H$ with $g_1>0$ the intersection of $(g_0,g_0+g_1)$ with $G$ is non-empty. A \textbf{limit point} is a point which is a left-sided or a right-sided limit point.
	The \textbf{divisible hull} of $G$ is denoted by $\div{G}$. Note that $G$ and $\div{G}$ have the same rank, i.e. $vG = v\div{G}$. The \textbf{closure} of $G$ in $\div{G}$ with respect to the order topology is denoted by $\cl(G)$. Note that $G$ is dense in $\div{G}$ if and only if $\cl(G) = \div{G}$. Note further that $G$ has a limit point in $\div{G} \setminus G$ (i.e. there is some $a \in \div{G} \setminus G$ such that $a$ is a limit point of $G$ in $\div{G}$) if and only if $G$ is not closed in $\div{G}$.
	
	For any ordered abelian groups $G_1$ and $G_2$, we denote the \textbf{lexicographic sum} of $G_1$ and $G_2$ by $G_1 \oplus G_2$. This is the abelian group $G_1 \times G_2$ with the lexicographic ordering $(a,b) < (c,d)$ if $a<c$, or $a=c$ and $b<d$. Let $(\Gamma,\leq)$ be an ordered set and for each $\gamma \in \Gamma$, let $A_\gamma$ be an archimedean ordered abelian group. For any element $s$ in the product group $\prod_{\gamma \in \Gamma} A_\gamma$, define the \textbf{support of $s$} by $\supp(s) = \{\gamma \in \Gamma \mid s(\gamma) \neq 0\}$. The \textbf{Hahn product} $\H_{\gamma \in \Gamma}A_\gamma$ is the subgroup of $\prod_{\gamma \in \Gamma} A_\gamma$ consisting of all elements with well-ordered support. Moreover, $\H_{\gamma \in \Gamma}A_\gamma$ becomes an ordered group under the order relation $s > 0 :\Leftrightarrow s(\min\supp (s)) > 0$. We express elements $s$ of $\H_{\gamma \in \Gamma}A_\gamma$ by $s = \sum_{\gamma \in \Gamma} s_\gamma \mathds{1}_\gamma$, where $s_\gamma = s(\gamma)$ and $\mathds{1}_\gamma$ is the characteristic function of $\gamma$ mapping $\gamma$ to $1$ and everything else to $0$. The \textbf{Hahn sum} $\coprod_{\gamma \in \Gamma} A_\gamma$ is the ordered subgroup of $\H_{\gamma \in \Gamma}A_\gamma$ consisting of all elements with finite support.
	\\

	In \Autoref{sec:defconvval}, we address the question what convex valuations are $\Lor$-definable in ordered fields. In analogy to \thref{constr:val}, we will show in \thref{prop:subgroupdef} that for any densely ordered abelian group which is not dense in its divisible hull, there exists a proper non-trivial convex $\Log$-definable subgroup. We will repeatedly use the following characterisation of densely ordered abelian groups.
	
	\begin{proposition}\thlabel{prop:denseeq}
		Let $G$ be an ordered abelian group. Then the following are equivalent:
		
		\begin{enumerate}[wide, labelwidth=!, labelindent=6pt]
			\item \label{prop:denseeq:1} $G$ is densely ordered.
			
			\item \label{prop:denseeq:2} $0$ is a limit point of $G$ in $G$.
			
			\item \label{prop:denseeq:3} $0$ is a limit point of $G$ in $\div{G}$.
			
			\item  \label{prop:denseeq:4} Either $vG$ has no last element, or $vG$ has a last element $\gamma$ and $B_\gamma$ is densely ordered.
		\end{enumerate} 
	\end{proposition}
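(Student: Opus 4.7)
The plan is to establish the four equivalences via the cycle $(1)\Rightarrow(4)\Rightarrow(3)\Rightarrow(2)\Rightarrow(1)$, with the structural condition $(4)$ serving as the bridge that upgrades the local density statement $(2)$ into the stronger statement $(3)$ about density in the divisible hull. The implication $(2)\Rightarrow(1)$ is a translation argument: given $a<b$ in $G$, applying $(2)$ to $b-a>0$ gives $g'\in G$ with $0<g'<b-a$, hence $a<a+g'<b$. The implication $(3)\Rightarrow(2)$ is immediate from $G\subseteq \div{G}$. For $(1)\Rightarrow(4)$, I would argue contrapositively: if $vG$ has a last element $\gamma$, then $G_\gamma=\{0\}$ and hence $B_\gamma=G^\gamma$; if moreover $B_\gamma$ has a smallest positive element $b$, then no $g\in G$ can lie strictly between $0$ and $b$, since $0<g<b$ forces $v(g)\geq v(b)=\gamma$, hence $v(g)=\gamma$ by maximality of $\gamma$, placing $g$ in $B_\gamma=G^\gamma$ and contradicting the minimality of $b$.

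For $(4)\Rightarrow(3)$, fix $d\in\div{G}$ with $d>0$. Since $v\div{G}=vG$, we have $v(d)\in vG$. If $v(d)$ is not the last element of $vG$ (which is automatic when $vG$ has no last element), pick $g\in G$ with $v(g)>v(d)$; then $g$ lies in a strictly smaller archimedean class than $d$, so $|g|<|d|$, and one takes $\pm g$. Otherwise $v(d)=\gamma$ is the last element, and using that $v(g/n)=v(g)$ for $n\in \N$ one identifies $(\div{G})^\gamma$ with $\div{G^\gamma}=\div{B_\gamma}$, so $d\in\div{B_\gamma}$. By $(4)$, $B_\gamma$ is archimedean and densely ordered, hence embeds as a dense subgroup of $\R$ and is therefore dense in $\div{B_\gamma}$, producing $g\in B_\gamma\subseteq G$ with $0<g<d$.

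The main obstacle is this last case, where one must pass through the archimedean embedding $B_\gamma \hookrightarrow \R$ and invoke the fact that any densely ordered archimedean abelian group is dense in its divisible hull (because a subgroup of $\R$ with no smallest positive element must be dense in $\R$). The remaining cases are routine once one is careful with the orientation convention on $vG$, in which a larger valuation corresponds to a smaller absolute value.
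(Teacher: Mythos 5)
Your proof is correct, but it routes the hard implication differently from the paper. The paper proves the equivalences pairwise and, in particular, gets from density of $G$ to density at $0$ in $\div{G}$ by a direct pigeonhole argument: given $g\neq 0$ and $N$, it picks $N$ points $0<c_1<\ldots<c_N<|g|$ and takes the minimum of the consecutive differences, which is forced to be $<\frac{|g|}{N}$; this needs no case split on $vG$ and no structure theory. You instead close the cycle $(1)\Rightarrow(4)\Rightarrow(3)\Rightarrow(2)\Rightarrow(1)$ and prove $(4)\Rightarrow(3)$ by splitting on whether $v(d)$ is the last element of $vG$, identifying $(\div{G})^\gamma$ with $\div{B_\gamma}$ in the critical case, and invoking Hölder's embedding of the archimedean component $B_\gamma$ into $\R$ together with the discrete-or-dense dichotomy for subgroups of $\R$. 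Both arguments are sound (your contrapositive for $(1)\Rightarrow(4)$ matches the paper's, and your use of "not densely ordered iff there is a least positive element" is licensed by the $(2)\Rightarrow(1)$ step you prove independently). The trade-off is that the paper's route is more elementary and self-contained, while yours makes the role of condition $(4)$ as the structural bridge explicit at the cost of importing Hölder's theorem; if you want to avoid that import, note that the paper's minimum-of-gaps trick proves $(1)\Rightarrow(3)$ outright and renders the archimedean embedding unnecessary.
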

	
	\begin{proof}
		We may assume that $G \neq \{0\}$, as in the case $G = \{0\}$ the e\-quiv\-a\-lences are clear.
		The equivalence of (\ref{prop:denseeq:1}) and (\ref{prop:denseeq:2}) is an easy consequence of the definition of a dense order on an abelian group. 
		
		Obviously, (\ref{prop:denseeq:3}) implies (\ref{prop:denseeq:2}). For the converse, suppose that (\ref{prop:denseeq:1}) holds. Let $g \in G\setminus\setbr{0}$ and $N \in \N$. We need to find $c \in G$ with $0 < c < \frac {|g|} N$. Since $G$ is densely ordered, there are $c_1,\ldots,c_N \in G$ such that $0<c_1<\ldots<c_N <|g|$. Let $c = \min\setbr{c_{i+1}-c_i \mid i \in \setbr{0,\ldots,N}}$, where we set $c_0 = 0$ and $c_N = |g|$. We obtain $0<c<2c<\ldots<Nc<|g|$ and thus $0<c<\frac{|g|}N$, as required.
		
		It remains to show that (\ref{prop:denseeq:1}) and (\ref{prop:denseeq:4}) are equivalent. Suppose that $vG$ has a last element $\gamma$ and $B_\gamma$ is not densely ordered. By (\ref{prop:denseeq:2}) applied to $B_\gamma$, there is some $g \in G$ with $g>0$ and $v(g) = \gamma$ such that there is no element in $B_\gamma$ strictly between $0 + G_\gamma$ and $g + G_\gamma$. Let $h\in G$ such that $0 < h \leq g$. Then $v(h) \geq v(g) = \gamma$. By maximality of $\gamma$, we have $v(h) = \gamma$. Thus, $0 + G_\gamma < h + G_\gamma \leq g + G_\gamma$. This implies $h + G_\gamma = g + G_\gamma$, i.e. $v(g-h) > \gamma$. Again, by maximality of $\gamma$, we obtain $g = h$. Hence, there is no element in $G$ stricly between $0$ and $g$, showing that $G$ is not densely ordered. This shows that (\ref{prop:denseeq:1}) implies (\ref{prop:denseeq:4}). For the converse, suppose that (\ref{prop:denseeq:4}) holds. We will show that $0$ is a limit point of $G$ in $G$. Let $g \in G$ with $g > 0$. If $v(g)$ is not maximal, then let $h \in G$ with $h > 0$ and $v(h) > v(g)$. Then $0<h<g$, as required. Otherwise, $\gamma = v(g)$ is the maximum of $vG$, and by assumption, $B_\gamma$ is densely ordered. Thus, there is some $h \in G$ with $v(h) = \gamma$ such that $0 + G_\gamma < h + G_\gamma < g + G_\gamma$, whence $0 < h < g$.
	\end{proof}
	
	\begin{proposition}\thlabel{prop:subgroupdef}
		Let $G$ be a densely ordered abelian group which is not dense in $\div{G}$.
		Then $G$ has a proper non-trivial convex subgroup which is $\Log$-definable with one parameter.
	\end{proposition}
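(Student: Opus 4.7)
My plan is to locate a value $\gamma \in vG$ such that the archimedean component $B(G, \gamma)$ is isomorphic to $\Z$, and then to $\Log$-define the convex subgroup $G_\gamma$ using a representative of a generator of $B(G, \gamma)$ as the single parameter.

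For the first step I would exploit that $vG = v\div{G}$ and that $B(\div{G}, \gamma) = \div{B(G, \gamma)}$ for every $\gamma \in vG$; the latter holds because every element of $\div{G}$ is a rational multiple of an element of $G$, so the quotient $B(\div{G}, \gamma) / B(G, \gamma)$ is torsion inside the divisible group $B(\div{G}, \gamma)$. Via these identifications, density of $G$ in $\div{G}$ is equivalent to density of each $B(G, \gamma)$ in its divisible hull. To extract the direction I need, I would pick $\alpha \in \div{G} \setminus \cl(G)$ and $\epsilon > 0$ in $\div{G}$ with $(\alpha - \epsilon, \alpha + \epsilon) \cap G = \emptyset$; using \thref{prop:denseeq} applied to the densely ordered $G$, I may shrink and assume $\epsilon \in G$. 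After translating $\alpha$ by a suitable element of $G$ I can arrange $v(\alpha) = v(\epsilon) =: \gamma$, and then the inequality $|g - \alpha| \geq \epsilon$ for every $g \in G^\gamma$ passes down to $B(\div{G}, \gamma)$ to show that $\ol{\alpha}$ lies outside an $\ol{\epsilon}$-neighbourhood of $B(G, \gamma)$. Hence $B(G, \gamma)$ is not dense in $B(\div{G}, \gamma)$, and since a non-trivial archimedean group failing this property must be isomorphic to $\Z$, I obtain $B(G, \gamma) \cong \Z$. By \thref{prop:denseeq} this $\gamma$ cannot be the maximum of $vG$, for otherwise $B(G, \gamma)$ would be densely ordered.

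With $\gamma$ in hand, choose $e \in G^\gamma$ with $e > 0$ and $e + G_\gamma$ a generator of $B(G, \gamma) \cong \Z$, so that every element of $G^\gamma$ is uniquely of the form $me + g'$ with $m \in \Z$ and $g' \in G_\gamma$. I claim that $G_\gamma = \{h \in G : -e < 2h < e\}$; the right-hand side is $\Log$-definable with parameter $e$. The inclusion $G_\gamma \subseteq \{h : -e < 2h < e\}$ is immediate since $v(h) > \gamma$ implies $n|h| < e$ for every $n \in \N$, in particular $|2h| < e$. For the reverse, suppose $h \notin G_\gamma$: if $v(h) < \gamma$ then $|2h| = 2|h|$ is archimedean-larger than $e$, so $|2h| > e$; if $v(h) = \gamma$, write $h = me + g'$ with $m \in \Z \setminus \{0\}$ and $g' \in G_\gamma$, and by the reverse triangle inequality in ordered abelian groups $|2h| \geq 2|m| e - |2g'| \geq 2e - |2g'| > e$ since $|2g'| < e$. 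Thus $G_\gamma$ is a proper (as $e \notin G_\gamma$) and non-trivial (as $\gamma$ is not the maximum of $vG$) convex subgroup defined by the claimed formula.

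The main obstacle is the first step: the componentwise characterisation of density of $G$ in $\div{G}$. Both the identification $B(\div{G}, \gamma) = \div{B(G, \gamma)}$ and the reduction placing the ``gap point'' $\alpha$ at the correct archimedean level $v(\epsilon)$ require some preparatory work; once the discrete component is located, producing the generator $e$ and verifying the defining formula is a clean computation exploiting that $B(G, \gamma) \cong \Z$ has a distinguished generator modulo $G_\gamma$.
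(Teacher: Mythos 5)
Your reduction in the first step is not valid: it is \emph{not} true that failure of density of $G$ in $\div{G}$ forces some archimedean component to fail density in its divisible hull (and hence to be $\cong \Z$). The paper's own \thref{ex:regnew}~(\ref{ex:regnew:2}) is a counterexample to exactly this claim: for $A = \setbr{a/2^n \mid a,n \in \Z}$ the group $G = A \oplus A$ is densely ordered and not dense in $\div{G} = \Q \oplus \Q$, yet both archimedean components equal $A$, which is densely ordered and dense in $\Q$; no component is discrete. Only one direction of your claimed equivalence holds (density of $G$ in $\div{G}$, i.e.\ regularity plus dense order, does force the components to be dense in their divisible hulls via divisibility of the quotients $G/G_\gamma$), but you need the converse. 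Concretely, the step that breaks is ``after translating $\alpha$ by a suitable element of $G$ I can arrange $v(\alpha) = v(\epsilon)$'': taking $\alpha = \brackets{\frac13, 0}$ in the example, every admissible $\epsilon$ must satisfy $v(\epsilon) = \gamma_2$ (the class of the second coordinate), since any $\epsilon$ with nonzero first coordinate gives an interval meeting $G$ by density of $A$ in $\Q$; but $\alpha - g$ has first coordinate $\frac13 - a \neq 0$ for every $g = (a,b) \in G$, so $v(\alpha - g) = \gamma_1 \neq \gamma_2$ for all translates. The gap point simply does not live at the archimedean level of the gap's width, and no discrete component exists to be found. (The second half of your argument --- defining $G_\gamma$ by $-e < 2h < e$ from a generator $e$ of a component $\cong \Z$, and ruling out that $\gamma$ is maximal via \thref{prop:denseeq} --- is fine, but it rests on the broken first half.)

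The paper's proof takes a different and more robust route that you should compare against: it fixes $g_0 \in \div{G} \setminus \cl(G)$ (arranged so that $v(g_0)$ is not maximal), writes $g_0 = g_1/N$ with $g_1 \in G$ to make the cut $D = \setbr{g \in G^{\geq 0} \mid Ng < g_1}$ definable with the single parameter $g_1$, and then takes the ``stabiliser'' $A = \setbr{g \in G^{\geq 0} \mid g + D \subseteq D}$. Convexity of $A$ and closure of $-A \cup A$ under addition are checked directly; $A \neq G^{\geq 0}$ is immediate, and $A \neq 0$ uses only that $g_0$ is \emph{not} a limit point of $G$ (via \thref{prop:denseeq}). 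This works uniformly whether the obstruction to density lives in a single archimedean component or, as in $A \oplus A$, arises from the interaction between components.
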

	
	\begin{proof}
		Let $g_0 \in \div{G} \setminus \cl(G)$ with $g_0>0$. If $\gamma = v(g_0)$ is the maximum of $vG$, then let $h \in G$ with $v(h) < \gamma$. Note that $v(g_0 + h) = v(h) < \gamma$ and that $g_0 + h$ is not a limit point of $G$ in $\div{G}$, as $g_0$ is not a limit point of $G$ in $\div{G}$. Hence, $g_0 + h \in \div{G} \setminus \cl(G)$. Hence, by replacing $g_0$ by $g_0 + h$ if necessary, we may assume that $v(g_0)$ is not the maximum of $vG$.
		
		Let $g_1 \in G$ with $g_1>0$ and $N\in \N$ such that $g_0 = \frac{g_1}N$. Consider the set 
		$$D = \setbr{g \in G^{\geq 0} \mid g < g_0} = \setbr{g \in G^{\geq 0} \mid Ng < g_1}.$$ This set is $\Log$-definable with the parameter $g_1$. Let $$A = \setbr{g \in G^{\geq 0} \mid g + D \subseteq D}.$$
		Again, $A$ is $\Log$-definable with the parameter $g_1$. Note that $A$ is convex. Obviously, $A \neq G^{\geq0}$, as for any $g \in D$ and $h \in G^{\geq0}$ with $h>g_0-g$ we have $g + h > g_0$ and thus $h \notin A$. Assume that $A = 0$. Then for any $g \in G^{> 0}$, let $g' \in G^{\geq 0}$ with $g' < g_0$ and $g + g' > g_0$ and set $c_g = g + g'$. Then $g_0 < c_g < g_0 + g$.
		Since by \thref{prop:denseeq}~(\ref{prop:denseeq:3}), $0$ is a limit point of $G$ in $\div{G}$, for any $h \in \div{G}$ with $h > 0$, there exists some $g \in G$ such that  $0<g<h$. Thus, $g_0< c_g < g_0 + g < g_0 + h$. This shows that $g_0$ is a limit point of $G$ in $\div{G}$, contradicting the choice of $g_0 \notin \cl(G)$. Hence, $ A \neq 0$.
		
		Now for any $a,b \in A$ with $0<a<b$, we have $(a + b) + D \subseteq a + D \subseteq D$, whence $a+b \in A$. Moreover, $0<b-a<b$ and $-b<a-b<0$. Thus, by convexity, $b-a \in A$ and $a-b \in -A$. Similarly, for any $a,b \in -A$ with $a<b<0$, we have $a\pm b \in -A$ and $b-a \in A$. This shows that $H = -A \cup A$ is closed under addition and thus a $\Log$-definable convex subgroup of $G$. Since $0 \neq A \neq G^{\geq 0}$, we also have that $H$ is a proper non-trivial subgroup of $G$.
	\end{proof}
	
	We now consider regular ordered abelian groups. This well-known class of ordered abelian groups has been studied model theoretically in \cite{robinson} and algebraically in \cite{zakon}. 	An ordered abelian group is \textbf{regular} if it satisfies one of the equivalent conditions in \thref{fact:defregular} (cf. \cite[p.~14]{hong}, \cite[p.~137]{fehm} and \cite[p.~1148]{delon}).
	
	\begin{fact}\thlabel{fact:defregular}
		Let $G$ be an ordered abelian group. Then the following are equivalent:
		\begin{enumerate}[wide, labelwidth=!, labelindent=6pt]
			\item \label{fact:defregular1} For any prime $p \in \N$ and for any infinite convex subset $A \subseteq G$, there is a $p$-divisible element in $A$.
			
			\item \label{fact:defregular2} For any $n\in \N$ and any $a,b \in G$, if there are $g_1,\ldots,g_n \in G$ with $a \leq g_1 < \ldots < g_n \leq b$, then there  is some $c \in G$ with $a \leq nc \leq b$.
			
			\item For any non-trivial convex subgroup $H \subseteq G$, the quotient group $G/H$ is divisible.
		\end{enumerate}
	\end{fact}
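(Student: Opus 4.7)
The plan is to establish the cyclic implications $(\ref{fact:defregular1}) \Rightarrow (\ref{fact:defregular2}) \Rightarrow$ and back; concretely, I will prove $(\ref{fact:defregular1}) \Rightarrow$ (the third condition) $\Rightarrow (\ref{fact:defregular2}) \Rightarrow (\ref{fact:defregular1})$.

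For $(\ref{fact:defregular1}) \Rightarrow$ (the third condition), given a non-trivial convex subgroup $H \subseteq G$, an element $g \in G$ and a prime $p$, the coset $g + H$ is convex and infinite, so by $(\ref{fact:defregular1})$ it contains a $p$-divisible element $pc$; hence $\ol{g} = p\ol{c}$ in $G/H$. As $G/H$ is torsion-free (being an ordered abelian group), $p$-divisibility for every prime $p$ upgrades to divisibility.

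For (the third condition) $\Rightarrow (\ref{fact:defregular2})$, given $a < b$ in $G$, $n \geq 2$ and $g_1 < \ldots < g_n$ in $[a,b]$, I would let $H$ be the smallest convex subgroup of $G$ containing $b-a$ and split into two cases. If $H$ admits a proper non-trivial convex subgroup $H'$, then minimality of $H$ forces $b-a \notin H'$, so $\ol{a} < \ol{b}$ strictly in $G/H'$. Since $G/H'$ is divisible by assumption, it is densely ordered and $n$-divisible, so one finds $\ol{c} \in G/H'$ with $\ol{a} < n\ol{c} < \ol{b}$; any lift $c \in G$ satisfies $a < nc < b$, because the strict inequalities in $G/H'$ translate to $nc - a$ and $b - nc$ exceeding every element of $H'$. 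The principal obstacle is the remaining case, where $H$ itself is archimedean and thus embeds into $\R$ as either a cyclic or a dense subgroup. Here divisibility of $G/H$ yields $c_0 \in G$ with $nc_0 - a \in H$; torsion-freeness of $G/H$ identifies the fibre of solutions as exactly $c_0 + H$, so the possible values of $nc - a$ range over a single coset of $nH$ in $H$. The hypothesis that $[0,b-a]$ contains $n$ distinct elements $g_i - a$ then rules out pathologies: in the cyclic case with smallest positive generator $h_0$, the interval $[0,b-a]$ contains all $n$ standard representatives $0, h_0, \ldots, (n-1)h_0$ of $H/nH$; in the dense case, $nH$ already has arbitrarily small positive elements, so every coset of $nH$ is dense in $\R$ and hence meets $[0,b-a]$. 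Either way, $(nc_0 - a) + nH$ meets $[0, b-a]$, producing $c \in G$ with $nc \in [a,b]$.

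For $(\ref{fact:defregular2}) \Rightarrow (\ref{fact:defregular1})$ the argument is immediate: for an infinite convex $A \subseteq G$ and a prime $p$, pick any $p$ distinct elements $g_1 < \ldots < g_p$ of $A$, apply $(\ref{fact:defregular2})$ to obtain $c \in G$ with $g_1 \leq pc \leq g_p$, and invoke convexity of $A$ to conclude $pc \in A$.
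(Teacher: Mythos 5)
Your proposal is correct. Note, however, that the paper does not prove this statement at all: it is recorded as a \emph{Fact} with pointers to Delon--Farr\'e, Hong, and Fehm--Jahnke, so there is no internal proof to compare against; the sources typically obtain the equivalence alongside the Robinson--Zakon theory of regular groups rather than by a direct cycle of implications. Your self-contained argument is a genuine addition. The cycle $(\ref{fact:defregular1})\Rightarrow$ (third) $\Rightarrow(\ref{fact:defregular2})\Rightarrow(\ref{fact:defregular1})$ checks out: cosets of a non-trivial convex subgroup are infinite convex sets, which gives the first implication; the last implication is the one-line convexity argument you give. The real content is (third) $\Rightarrow(\ref{fact:defregular2})$, and your handling is sound: taking $H$ to be the smallest convex subgroup containing $b-a$, the non-archimedean case reduces to density of the divisible quotient $G/H'$ together with the observation that a strictly positive class modulo a convex subgroup lifts to an element exceeding all of $H'$; the archimedean case correctly reduces, via torsion-freeness of $G/H$, to showing that the coset $(nc_0-a)+nH$ meets $[0,b-a]$, which your counting of the $n$ elements $g_i-a\in H\cap[0,b-a]$ (they lie in $H$ by convexity, a step worth making explicit) settles in the cyclic case, and density of $nH$ under the H\"older embedding settles in the dense case. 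The only cosmetic omissions are the trivial $n=1$ case of $(\ref{fact:defregular2})$ and the explicit remark that a convex subgroup of a convex subgroup is convex in $G$ (needed to apply the third condition to $H'$); neither is a gap.
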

	
	The following fact is due to \cite{robinson} and \cite{zakon}.
	
	\begin{fact}\thlabel{fact:regrobinson}
		Let $G$ be an ordered abelian group. Then the following hold:
		
		\begin{enumerate}[wide, labelwidth=!, labelindent=6pt]
			\item \label{fact:regrobinson:1} $G$ is regular if and only if it has an archimedean model, i.e. there is some archimedean ordered abelian group $H$ such that $H \equiv G$.
			
			\item $G$ is regular and discretely ordered if and only if it is a $\Z$-group, i.e. $G \equiv \Z$ as ordered groups.
		\end{enumerate}
	\end{fact}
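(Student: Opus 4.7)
The plan is to establish (1) first and derive (2) from it. Throughout, note that regularity (by condition (\ref{fact:defregular2}) of \thref{fact:defregular}, a first-order $\Log$-schema) and discreteness (expressible as ``there exists a least positive element'') both transfer along elementary equivalence.

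For the backward direction of (1), suppose $G \equiv H$ with $H$ archimedean. An archimedean ordered abelian group has no convex subgroup other than $\{0\}$ and itself, so condition (3) of \thref{fact:defregular} holds vacuously for $H$: the only non-trivial convex subgroup is $H$ itself, and $H/H$ is trivially divisible. Hence $H$ is regular and, by first-order transfer, so is $G$.

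The forward direction of (1) is the main obstacle. The approach is to invoke the Robinson-Zakon classification: each completion of the $\Log$-theory of regular ordered abelian groups is determined by its Szmielew-type invariants, namely the indices $[G:pG]$ for each prime $p$ (with values in $\N \cup \{\infty\}$), the discrete-or-dense distinction, and in the discrete case the residue class of the least positive element modulo each prime. Given these invariants for a regular $G$, one constructs an archimedean $H$ realising the same invariants: in the dense case as a suitable subgroup of $\R$ (for instance, one obtained by localising $\Z$ at the appropriate sets of primes), and in the discrete case necessarily $H \cong \Z$. By the classification theorem, $H \equiv G$.

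For (2), if $G$ is regular and discrete then by (1) some archimedean $H \equiv G$ exists; discreteness transfers to $H$, and every discrete archimedean ordered abelian group is isomorphic to $\Z$, since its least positive element generates an infinite cyclic convex subgroup which by archimedeanness exhausts the group. Thus $G \equiv \Z$. Conversely, $\Z$ itself is archimedean and discrete, hence regular and discrete, and both properties transfer to any $G \equiv \Z$.
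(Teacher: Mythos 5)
The paper offers no proof of this statement: it is recorded as a \emph{Fact} and attributed wholesale to the cited works of Robinson--Zakon and Zakon, so there is no in-paper argument to compare yours against. Your reconstruction is correct where it is self-contained. The backward direction of (1) is right: an archimedean group has no proper non-trivial convex subgroups, so condition (3) of \thref{fact:defregular} holds vacuously, and regularity in the form of condition (\ref{fact:defregular2}) is a first-order $\Log$-scheme that transfers along $\equiv$. Part (2) is also handled correctly, since discreteness (equivalently, the existence of a least positive element, which generates a convex copy of $\Z$) is first-order, and a discrete archimedean group is exhausted by the cyclic group generated by its least positive element. The forward direction of (1), however, is precisely the substance of the Robinson--Zakon theorem, and you establish it by invoking their classification of completions of the theory of regular ordered abelian groups by the invariants $[G:pG]$ together with the discrete/dense distinction; this is a legitimate reduction, but it delegates the entire difficulty to the same sources the paper cites, so at that point your argument is a citation rather than a proof --- which is exactly the paper's own treatment. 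Two small inaccuracies in your sketch: localising $\Z$ at a set of primes only realises $[G:pG]\in\{1,p\}$, so to realise $[G:pG]=p^k$ with $k\geq 2$ or $[G:pG]=\infty$ one must pass to archimedean subgroups of $\R$ of higher rational rank (e.g.\ the group generated by $k$ rationally independent reals over the localisation of $\Z$ away from $p$); and in the discrete case the residue of the least positive element modulo $p$ is not an additional invariant, since it is always a generator of $G/pG$ --- the unique discrete completion is Presburger arithmetic, consistent with part (2).
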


	We can deduce from \thref{fact:regrobinson}~(\ref{fact:regrobinson:1}) that any Hahn product whose rank has no last element and which is not divisible does not have an archimedean model. A similar result holds for ordered Hahn fields (see \thref{rmk:hahnfieldnoarch}).
	
	\begin{proposition}\thlabel{prop:hahnnoarch}
		Let $(\Gamma,<)$ be a totally ordered set without a last element, and for any $\gamma \in \Gamma$, let $A_\gamma \neq \{0\}$ be an ordered abelian group. Suppose that the Hahn product $G = \H_{\gamma \in \Gamma}  A_\gamma$ is non-divisible. Then $G$ has no archimedean model.
	\end{proposition}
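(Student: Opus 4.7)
My plan is to combine Fact~3.5(1) with the equivalent form of regularity given in Fact~3.4(3): it suffices to produce a non-trivial convex subgroup $H \subseteq G$ for which $G/H$ is not divisible, since then $G$ fails to be regular and therefore has no archimedean model.

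The first step is a coordinate-wise analysis of $p$-divisibility in a Hahn product. Since ordered abelian groups are torsion-free, a $p$-th root in $G$ is unique; and for $g = \sum_{\gamma \in \Gamma} g_\gamma \mathds{1}_\gamma \in G$, a direct check shows that $g$ is $p$-divisible in $G$ if and only if $g_\gamma$ is $p$-divisible in $A_\gamma$ for every $\gamma$ (the well-ordered support of the putative root is automatic, as its support is contained in $\supp g$). Because $G$ is non-divisible, there exist a prime $p$ and an index $\gamma_0 \in \Gamma$ such that $A_{\gamma_0}$ is not $p$-divisible. Next, using that $\Gamma$ has no last element, the set $F := \{\gamma \in \Gamma : \gamma > \gamma_0\}$ is a non-empty final segment of $\Gamma$ that excludes $\gamma_0$. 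The standard correspondence between final segments of $vG = \Gamma$ and convex subgroups of $G$ (recalled at the start of Section~3) then yields the convex subgroup $H := \{g \in G : \supp g \subseteq F\}$, which is non-trivial because any $\gamma_1 \in F$ together with a non-zero element of $A_{\gamma_1}$ produces a non-zero element of $H$.

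It remains to show $G/H$ is not $p$-divisible. Picking $a \in A_{\gamma_0}$ with no $p$-th root in $A_{\gamma_0}$ and setting $g := a \mathds{1}_{\gamma_0}$, I would verify that $g + H$ has no $p$-th root in $G/H$: if $ph - g \in H$ for some $h \in G$, then $\gamma_0 \notin \supp(ph-g)$, so $(ph-g)_{\gamma_0} = 0$ forces $p h_{\gamma_0} = a$, contradicting the choice of $a$. Thus $G/H$ is not divisible, so $G$ is not regular by Fact~3.4(3), and by Fact~3.5(1) has no archimedean model. The argument is elementary; the only conceptual point is the coordinate-wise characterisation of $p$-divisibility in Hahn products, and the two hypotheses enter at complementary steps — the non-triviality of every $A_\gamma$ both locates the witness and populates $H$, while the absence of a maximum in $\Gamma$ ensures $F$ is non-empty so that $H$ is non-trivial.
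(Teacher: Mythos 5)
Your proof is correct and follows essentially the same route as the paper: both pass to non-regularity via \thref{fact:regrobinson}~(\ref{fact:regrobinson:1}) and exhibit the same convex subgroup $H$ supported on $\Gamma^{>\gamma_0}$ whose quotient is non-divisible. You merely supply details the paper leaves implicit (the coordinate-wise characterisation of divisibility, the non-triviality of $H$, and the explicit witness in $G/H$), all of which check out.
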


	\begin{proof}
		By  \thref{fact:regrobinson}~(\ref{fact:regrobinson:1}), we need to show that $G$ is not regular. Since $G$ is non-divisible, there is some $\gamma_0 \in \Gamma$ such that $A_{\gamma_0}$ is non-divisible. Then for $H= \H_{\gamma \in \Gamma^{>\gamma_0}} A_\gamma$ we have $G / H \cong \H_{\gamma \in \Gamma^{\leq \gamma_0}} A_\gamma$, which is not divisible. Hence, $G$ is not regular.
	\end{proof}

	In \Autoref{sec:density}, we will study the class of ordered fields which are dense in their real closure. The analogue in the ordered abelian group case is the class of ordered abelian groups which are dense in their divisible hull. The following proposition shows that this is exactly the class of regular densely ordered abelian groups.
	
	\begin{proposition}\thlabel{prop:regimpliesdense}
		Let $G$ be an ordered abelian group. Then $G$ is regular and densely ordered if and only if $G$ is dense in $\div{G}$.
	\end{proposition}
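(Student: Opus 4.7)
The plan is to prove the two implications separately, using different characterisations of regularity from \thref{fact:defregular}. I may safely assume $G \neq \{0\}$ throughout, so that $\div{G}$ is a non-trivial divisible, hence densely ordered, group.

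For the direction ``dense in $\div{G}$ implies regular and densely ordered'', density of $G$ as an ordered group is immediate: given $a<b$ in $G$, density of $G$ in $\div{G}$ produces some $c \in G$ with $a<c<b$. To verify regularity I would use condition~(3) of \thref{fact:defregular}. Let $H \subseteq G$ be a non-trivial convex subgroup and fix $h_0 \in H$ with $h_0>0$. For $g \in G$ and $n \in \N$, the elements $g/n - h_0/n$ and $g/n + h_0/n$ lie in $\div{G}$, so density of $G$ in $\div{G}$ provides some $h \in G$ strictly between them. Multiplying by $n$ gives $-h_0 < nh - g < h_0$, whence $nh - g \in H$ by convexity of $H$, i.e.\ $g + H = n(h + H)$ in $G/H$. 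Thus $G/H$ is divisible and $G$ is regular.

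For the converse, I would rely on characterisation~(\ref{fact:defregular2}) of \thref{fact:defregular}. Given $a<b$ in $\div{G}$, use density of $\div{G}$ to pick $a',b' \in \div{G}$ with $a<a'<b'<b$, and choose $N \in \N$ with $Na', Nb' \in G$. Since $G$ is densely ordered, iterating density produces $g_1, \ldots, g_N \in G$ with $Na' < g_1 < \ldots < g_N < Nb'$. Regularity then yields $c \in G$ with $Na' \leq Nc \leq Nb'$, i.e.\ $a' \leq c \leq b'$ in $\div{G}$, and hence $a < c < b$.

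I do not expect a serious obstacle; the only subtlety is obtaining strict inequalities at the end of the forward direction, which is exactly why the interpolation step $a<a'<b'<b$ is inserted before applying the regularity condition, whose conclusion is only a weak inequality.
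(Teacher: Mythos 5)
Your proof is correct, and its overall shape matches the paper's: both directions are routed through the equivalent characterisations of regularity in \thref{fact:defregular}. The differences are worth noting. For ``dense in $\div{G}$ implies regular'' the paper verifies condition~(\ref{fact:defregular1}) (every infinite convex subset contains a $p$-divisible element, obtained by squeezing some $g$ between $\frac{a}{p}$ and $\frac{b}{p}$), whereas you verify condition~(3) directly by showing each quotient $G/H$ by a non-trivial convex subgroup is divisible; both are short, and yours has the mild advantage of producing the divisibility of the quotients explicitly rather than through the cited equivalence. For the converse, both you and the paper use condition~(\ref{fact:defregular2}), but your interpolation step $a<a'<b'<b$ is a genuine improvement in care: the paper passes from $a \leq nc \leq b$ straight to $\frac{a}{n} < c < \frac{b}{n}$, which as written only yields the weak inequalities $\frac{a}{n} \leq c \leq \frac{b}{n}$ (problematic exactly when $\frac{a}{n}$ or $\frac{b}{n}$ already lies in $G$), while your version delivers the strict inequalities that the definition of density requires. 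So: same strategy, a slightly different choice of which equivalent form of regularity to check in one direction, and a small gap in the published argument that your write-up closes.
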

	
	\begin{proof}
		Since for divisible ordered abelian groups the conclusion in trivial, we assume that $G$ is non-divisible. 
		
		Suppose that $G$ is regular and densely ordered. Let $a,b \in G$ and $n \in \N$ such that $\frac an < \frac bn$. Since $G$ is densely ordered, there are $g_1,\ldots,g_n \in G$ with $a \leq g_1 < \ldots < g_n \leq b$. By \thref{fact:defregular}~(\ref{fact:defregular2}), there  is some $c \in G$ with $a \leq nc \leq b$. Hence, $\frac an < c < \frac bn$, as required.
		
		Conversely, suppose that $G$ is dense in $\div{G}$. Then $G$ is densely ordered as $\div{G}$ is densely ordered. Let $p \in \N$ be prime, $A \subseteq G$ an infinite convex subset of $G$ and $a,b \in A$ with $a<b$. Then there is some $g \in G$ with $\frac a p < g < \frac bp$ and thus $a < pg < b$. Hence, $A$ contains a $p$-divisible element, as required by \thref{fact:defregular}~(\ref{fact:defregular1}).
	\end{proof}
	
	\begin{remark}\thlabel{rmk:denseindivregular}
	Note that \thref{fact:regrobinson} and \thref{prop:regimpliesdense} imply that an ordered abelian group is dense in its divisible hull if and only if it has a densely ordered archimedean model. 
	\end{remark}
	
	The following two examples of ordered abelian groups which are not regular will be used in \Autoref{sec:defconvval} for a comparison between $\Lor$- and $\Lr$-definability of a given henselian valuation.
	
	\begin{example}\thlabel{ex:regnew}
		\begin{enumerate}[wide, labelwidth=!, labelindent=6pt]
			\item  \label{ex:regnew:1}
			$\Z \oplus \Z$ is a discretely ordered group which is not regular.
			
			\item  \label{ex:regnew:2} Let	$A = \setbr{\left.\frac{a}{2^n} \ \right| a, n \in \Z}$.
			Note that $A$ is dense in $\div{A} = \Q$. Consider $G = A \oplus A$. Since there is no element in $G$ between $\left(\frac 1 3 , 0\right)$ and $\left(\frac 1 3, 1\right)$, which both lie in $\div{G} = \Q \oplus \Q$, it follows that $G$ is not dense in $\div{G}$ and thus not regular. However, $G$ has limit points in $\div{G}\setminus G$, e.g. $\left(0,\frac 1 3\right)$.
			
		\end{enumerate}
	\end{example} 
	
	It is known that any dense extension of ordered fields is immediate (cf. \cite[p.~29~f.]{kuhlmann}). In the following we will study extensions of ordered abelian groups $G \subseteq H$ with special focus on the case that $H = \div{G}$. Note that for any $\gamma \in vG$, we have $B(\gamma,\div{G}) = \div{B(\gamma,G)}$. Hence, the extension $G \subseteq \div{G}$ is immediate if and only if all archimedean components of $G$ are divisible.
	
	\begin{proposition}\thlabel{prop:densimpimm}
		Let $G\subseteq H$ be a dense extension of ordered abelian groups. Suppose that $vG$ has no last element. Then the extension $G \subseteq H$ is immediate.
	\end{proposition}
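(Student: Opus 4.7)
The plan is to verify the two ingredients of immediacy with respect to the natural valuation: equality of value sets ($vG = vH$) and equality of archimedean components ($B(G,\gamma) = B(H,\gamma)$ for every $\gamma \in vG$). Throughout, note that the natural valuation is defined purely in terms of archimedean equivalence, hence its restriction to $G$ coincides with the natural valuation of $G$; this means that verifying the two conditions above amounts to showing that every element of $H$ is archimedean equivalent to some element of $G$, and that every coset of $H_\gamma$ in $H^\gamma$ is represented by an element of $G$.

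First I would handle the value set. Pick $h \in H$ with $h>0$. By density of $G$ in $H$ applied to the interval $(h,2h)$, there is $c \in G$ with $h < c < 2h$. Then $|c| < 2|h|$ and $|h| < |c|$, so $c$ and $h$ are archimedean equivalent in $H$, i.e.\ $v(h) = v(c) \in vG$. Hence $vH \subseteq vG$, and the reverse inclusion is trivial.

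Next I would handle the archimedean components. Fix $\gamma \in vG$ and $h \in H$ with $v(h)=\gamma$; by symmetry assume $h > 0$. Since $vG$ has no last element, there exists $\gamma' \in vG$ with $\gamma' > \gamma$, and we may choose $g' \in G$ with $g'>0$ and $v(g') = \gamma'$; in particular $g' < h$ because $v(g')>v(h)$. By density applied to the interval $(h-g',\,h+g')$ of $H$, we find $g \in G$ with $h-g' < g < h+g'$, so $|g - h| < g'$ and consequently $v(g-h) \geq v(g') > \gamma$. Therefore $g \equiv h \pmod{H_\gamma}$; in particular $v(g) = \gamma$, so $g \in G^\gamma$, and $g$ represents the coset of $h$ in $B(H,\gamma) = H^\gamma/H_\gamma$. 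This shows that the canonical inclusion $B(G,\gamma) \hookrightarrow B(H,\gamma)$ is surjective, hence an isomorphism of ordered abelian groups. Combined with Step~1, this yields that $G \subseteq H$ is immediate.

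I do not anticipate any real obstacle here: the only non-trivial point is understanding why the hypothesis ``$vG$ has no last element'' is needed, and the proof of Step 2 makes this transparent — it is precisely what allows us to produce an element $g' \in G$ of strictly larger value than $\gamma$, so that density at scale $g'$ forces the approximation $g$ to agree with $h$ modulo $H_\gamma$. Without this assumption (i.e.\ if $\gamma$ were the maximum of $vG$), one could not shrink the density interval below $\gamma$, and the argument would genuinely fail, as illustrated by \thref{ex:regnew}(\ref{ex:regnew:2}).
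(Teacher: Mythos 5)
Your proof is correct and follows essentially the same route as the paper: the key step---using the absence of a last element in $vG$ to produce $g' \in G$ with $v(g') > v(h)$ and then applying density to the interval $(h-g',\,h+g')$ to get $g \in G$ with $v(g-h) \geq v(g') > v(h)$---is exactly the paper's argument, merely repackaged as a two-part verification (value sets, then archimedean components) instead of the single approximation criterion that for every $a \in H$ there is $b \in G$ with $v(a-b) > v(a)$. One small slip in your closing remark: the counterexample showing the necessity of the hypothesis is \thref{ex:nonarchdense} (namely $\Q \oplus A$, which is dense in its divisible hull yet not an immediate extension of it), not \thref{ex:regnew}~(\ref{ex:regnew:2}), where the density hypothesis itself already fails.
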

	
	\begin{proof}
		In order to show that $G \subseteq H$ is immediate, we need to show that for any $a \in H$ there exists $b \in G$ such that $v(a-b)> v(a)$. Note that the density of $G$ in $H$ implies $vG = vH$. Let $a \in H$ with $a>0$. Since $vG$ has no last element, there is some $c \in G^{>0}$ such that $v(c) > v(a)$. By density of $G$ in $H$, there is some $b \in G$ such that $a-c< b < a+c$. We obtain $v(a-b) \geq  v(c) > v(a)$, as required.
	\end{proof}
	
	\begin{corollary}\thlabel{cor:densimpimm}
		Let $G$ be an ordered abelian group such that $vG$ has no last element and $G$ is dense in $\div{G}$. Then the extension $G \subseteq \div{G}$ is immediate.
	\end{corollary}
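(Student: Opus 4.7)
The plan is to apply the preceding \thref{prop:densimpimm} directly, taking $H = \div{G}$. The hypotheses of the proposition require that the extension $G \subseteq H$ is dense and that $vG$ has no last element; both are precisely the hypotheses of the corollary. So the argument reduces to checking that no additional content is needed when specialising to $H = \div{G}$.

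The only point that deserves a moment's thought is that the proposition is stated for an abstract dense extension $G \subseteq H$, whereas here the value group of the ambient group is $v\div{G}$ rather than $vG$. But this is harmless: as recalled at the start of \Autoref{sec:densdivhull}, $\div{G}$ has the same rank as $G$, i.e. $vG = v\div{G}$, so the hypothesis ``$vG$ has no last element'' transfers verbatim. Hence \thref{prop:densimpimm} gives that $G \subseteq \div{G}$ is immediate with respect to the natural valuation.

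I do not expect any genuine obstacle here — the result is a corollary in the literal sense, the entire work having been done in \thref{prop:densimpimm} (finding, for $a \in \div{G}^{>0}$, an element $c \in G^{>0}$ with $v(c) > v(a)$ using that $vG$ has no last element, and then using density to approximate $a$ within $\pm c$).
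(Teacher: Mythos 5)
Your proposal is correct and matches the paper exactly: the paper's proof is the one-line application of \thref{prop:densimpimm} to $H = \div{G}$. Your additional remark that $vG = v\div{G}$ (so the hypothesis on the value set transfers) is a sensible sanity check that the paper leaves implicit.
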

	
	\begin{proof}
		Apply \thref{prop:densimpimm} to $H = \div{G}$.
	\end{proof}
	
	\thref{cor:densimpimm} does not hold in general in the case where $vG$ has a last element, as the following example will show.
	
	\begin{example}\thlabel{ex:nonarchdense}
		 Let $A$ be as in \thref{ex:regnew}~(\ref{ex:regnew:2}). For $G = \Q \oplus A$ we have $\div{G} = \Q \oplus \Q$. Moreover, $G$ is dense in $\div{G}$, as $A$ is dense in $\Q$. However, the extension is not immediate, as the archimedean components do not coincide.
	\end{example}

	We will now consider the converse direction, that is, under what conditions an immediate extension of ordered abelian groups is dense.
	
	\begin{lemma}\thlabel{lem:denseimm}
		Let $G,A_1,A_2,\ldots$ be ordered abelian groups such that $G\subseteq H = \H_{n \in \omega} A_n$ is immediate. Let $a \in \H_{n \in \omega} A_n$. Then there exists a sequence $(d_n)_{n \in \omega}$ in $G$ such that for any $k \in \omega$ and any $i \leq k$ we have $(d_0+\ldots+d_k)(i) = a(i)$.
	\end{lemma}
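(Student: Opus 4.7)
The plan is to construct the sequence $(d_n)_{n \in \omega}$ inductively, ensuring at each stage that the partial sum $s_k := d_0 + \ldots + d_k$ agrees with $a$ on all coordinates $\leq k$. The whole argument rests on the standard characterisation of immediate extensions: $G \subseteq H$ is immediate with respect to the natural valuation if and only if for every $h \in H \setminus \{0\}$ there is $b \in G$ with $v(h - b) > v(h)$, where $v$ is the natural valuation. On the Hahn product $H = \H_{n \in \omega} A_n$ this valuation is simply $v(h) = \min \supp(h) \in \omega$, so the condition $v(h - b) > v(h) = m$ translates into the concrete coordinate statement $(h - b)(i) = 0$ for all $i \leq m$.

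First I would handle the base case $k = 0$. If $a(0) = 0$, set $d_0 = 0$. Otherwise $v(a) = 0$; applying immediacy to $a$ yields $d_0 \in G$ with $v(a - d_0) > 0$, which forces $d_0(0) = a(0)$. Next, for the inductive step, assume $d_0, \ldots, d_k \in G$ have been chosen with $s_k(i) = a(i)$ for $i \leq k$. Let $r = a - s_k \in H$, so that $\supp(r) \subseteq \{k+1, k+2, \ldots\}$. If $r(k+1) = 0$ (in particular if $r = 0$), set $d_{k+1} = 0$. Otherwise $v(r) = k+1$, and immediacy applied to $r$ produces $d_{k+1} \in G$ with $v(r - d_{k+1}) > k+1$; this gives $d_{k+1}(i) = r(i) = 0$ for all $i \leq k$, together with $d_{k+1}(k+1) = r(k+1) = a(k+1) - s_k(k+1)$. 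Adding coordinate-wise, $s_{k+1}(i) = s_k(i) = a(i)$ for $i \leq k$, and $s_{k+1}(k+1) = s_k(k+1) + r(k+1) = a(k+1)$, closing the induction.

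There is no serious obstacle here; the only point worth flagging is keeping track of the distinction between the valuation being $> k$ (which discards the whole head up to coordinate $k$) versus the hoped-for coordinate match at position $k+1$. Both are handled uniformly by invoking immediacy on the remainder $r = a - s_k$, exploiting the fact that the value set of $H$ is contained in the discrete set $\omega$ so that "$v > k$" and "support starts at $\geq k+1$" coincide.
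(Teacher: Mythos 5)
Your proof is correct and follows essentially the same route as the paper's: an induction in which immediacy is applied to the remainder $a - (d_0 + \cdots + d_k)$ to extract $d_{k+1}$ agreeing with that remainder up to coordinate $k+1$. The only difference is that you explicitly dispose of the degenerate cases where the remainder vanishes at the relevant coordinate (setting $d_{k+1}=0$), a point the paper's proof glosses over; this is a harmless refinement.
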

	
	\begin{proof}
		Since $G \subseteq H$ is immediate, there is some $d_0 \in G$ such that $v(a-d_0) > v(a)$. Then $d_0(0) = a_0$. Suppose that $d_0,\ldots,d_k \in G$ are already constructed such that $d' = d_0+\ldots+d_k$ satisfies $d'(i) = a(i)$ for $i\leq k$. Again, since $G \subseteq H$ is immediate, there is some $d_{k+1} \in G$ such that $v((a-d')-d_{k+1}) > v(a-d') \geq k+1$. Thus, $d_{k+1}(i) = (a-d')(i) = 0$ for $i\leq k$ and $d_{k+1}(k+1) = (a-d')(k+1)$. We obtain $(d_0+\ldots + d_{k+1})(i) = a(i)$ for $i\leq k$ and $(d_0+\ldots + d_{k+1})(k+1) = (d' + d_{k+1})(k+1) = a(k+1)$, as required.
	\end{proof}
	
	\begin{proposition}\thlabel{prop:denseimm}
		Let $G \subseteq H$ be an immediate extension of ordered abelian groups such that $H$ is densely ordered and $vH \subseteq \omega$. Then $G$ is dense in $H$.
	\end{proposition}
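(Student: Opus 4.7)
The plan is to first extract an approximation principle in the spirit of \thref{lem:denseimm}: \emph{for every $a \in H$ and every $n \in \omega$ there exists $c \in G$ such that either $c = a$ or $v(a-c) > n$}. This can be derived by iterating the immediacy hypothesis inside $H$: set $a_0 = a$, and while $a_i \neq 0$, use immediacy to pick $c_i \in G$ with $v(a_i - c_i) > v(a_i)$ and let $a_{i+1} = a_i - c_i$. Since the values $v(a_i)$ lie in $vH \subseteq \omega$ and strictly increase, after at most $n+1$ steps one hits either $a_i = 0$ or $v(a_i) > n$, and then $c := c_0 + \cdots + c_{i-1}$ works. (An alternative is to embed $H$ immediately into a Hahn-product envelope with archimedean components $B(H,n)$ and invoke \thref{lem:denseimm} directly.)

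Now fix $a,b \in H$ with $a<b$, and set $\epsilon = b-a$ and $\gamma = v(\epsilon) \in vH = vG$. By \thref{prop:denseeq} applied to $H$, either $\gamma$ is not the maximum of $vH$, or $B(H,\gamma)$ is densely ordered. In the first case, choose $\gamma' \in vH$ with $\gamma' > \gamma$ and $g \in G^{>0}$ with $v(g) = \gamma'$, and apply the approximation principle to $a$ with $n = \gamma'$ to produce $c \in G$ with $c = a$ or $v(a-c) > v(g)$. If $c = a$, then $a+g \in G$ lies in $(a,b)$ because $g>0$ and the archimedean comparison $v(g) > v(\epsilon)$ yields $g < \epsilon$. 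If $c \neq a$, then $|c-a| < g$ archimedeanly, so $c \in (a-g,\,a+g) \setminus \{a\}$; take $c$ itself if $c > a$, and $c + g$ if $c < a$. In each situation the resulting element lies in $G \cap (a,b)$.

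If instead $\gamma = \max vH$, the approximation principle applied with any $n > \gamma$ forces $c = a$, because $v(a-c) \in vH$ cannot exceed $\gamma$; hence $a \in G$ and, by the same argument, $b \in G$, so $\epsilon \in G$. Because $\gamma$ is maximal, $G_\gamma = \{0\}$ and therefore $B(G,\gamma) = G^\gamma$; by immediacy this coincides with $B(H,\gamma)$, which is densely ordered by \thref{prop:denseeq}. So there exists $g \in G^\gamma$ with $0 < g < \epsilon$, and $c := a+g$ lies in $G \cap (a,b)$.

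The main obstacle I anticipate is the maximal case $\gamma = \max vH$: one has to recognize that the approximation principle now forces $a$ and $b$ to lie in $G$ outright, so that the density of $G$ between $a$ and $b$ must come entirely from the top archimedean component $B(G,\gamma) = B(H,\gamma)$, transferred from $H$ by immediacy. The non-maximal case is more routine, the only delicate point being the sign-based adjustment $c \mapsto c+g$.
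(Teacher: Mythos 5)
Your argument is correct and follows essentially the same strategy as the paper's proof: approximate $a$ by iterated applications of immediacy up to any prescribed level of $vH \subseteq \omega$ (the paper packages this as \thref{lem:denseimm} after passing to a Hahn product via the Hahn Embedding Theorem), then perturb by an element of $G$ whose value exceeds $v(b-a)$, splitting into cases according to whether such a value exists. Your intrinsic formulation and your case split on whether $v(b-a)$ is maximal in $vH$ (rather than on whether $vG$ has a last element) are, if anything, slightly cleaner than the paper's coordinate-based version.
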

	
	\begin{proof}
		Let $\Gamma = vH$. By the Hahn Embedding Theorem (cf. \cite[p.~14]{kuhlmann}), we may consider $H$ as a subgroup of the Hahn product $\H_{n \in \omega} B_n$, where we set $B_n = \{0\}$ for $n \notin \Gamma$. Note that $H \subseteq  \H_{n \in \omega} B_n$ is an immediate extension.
		Let $a,b \in H$ with $0< a < b$. 
		By \thref{lem:denseimm}, there exists a sequence $(d_n)_{n \in \omega}$ in $G$ such that for any $k \in \omega$ and any $i \leq k$ we have $(d_0+\ldots+d_k)(i) = a(i)$. Suppose that $k$ is the last element of $vG$. Then by \thref{prop:denseeq}~(\ref{prop:denseeq:4}), $B_k$ is densely ordered. Let $c \in G$ with $v(c) = k$ and $a(k) < c(k) < b(k)$. Then $d' = d_1+\ldots+d_{k-1} + c \in G$ and $a < d' < b$, as required. Now suppose that $vG$ has no last element. Then let $c \in G$ with $v(c) = m > k$ and $c(m) > a(m)$. Then $d' = d_1+\ldots+d_{m-1} + c \in G$ and $a < d' < b$, as required. 
	\end{proof}
	
	\begin{corollary}\thlabel{cor:denseimm}
		Let $G$ be an ordered abelian group. Suppose that the extension $G \subseteq \div{G}$ is immediate and that $vG \subseteq \omega$. Then $G$ is dense in $\div{G}$.
	\end{corollary}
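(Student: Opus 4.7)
The plan is to derive this as a direct application of \thref{prop:denseimm} with $H = \div{G}$, so it amounts to verifying the three hypotheses of that proposition in our setting. The immediacy of $G \subseteq \div{G}$ is part of the assumption, so nothing has to be shown there.

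Next, I would note that $\div{G}$ is densely ordered. Indeed, either $\div{G} = \{0\}$ (in which case the density conclusion is vacuous) or $\div{G}$ is a non-trivial divisible ordered abelian group, and divisibility immediately forces density of the order: given $a < b$ in $\div{G}$ the element $(a+b)/2$ lies strictly between them.

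Finally, I would recall that by the remark made earlier in the excerpt, $G$ and $\div{G}$ have the same rank, i.e.\ $v\div{G} = vG$. Combined with the standing assumption $vG \subseteq \omega$, this gives $v\div{G} \subseteq \omega$. With all three hypotheses verified, \thref{prop:denseimm} applies and yields that $G$ is dense in $\div{G}$.

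There is no genuine obstacle here; the whole point of stating the corollary separately is that \thref{prop:denseimm} was formulated for an abstract pair $G \subseteq H$, and one simply wants to record the specialisation to $H = \div{G}$, which is the case of interest for the later study of ordered fields that are dense in their real closure.
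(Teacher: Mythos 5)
Your proposal is correct and matches the paper's proof, which simply applies \thref{prop:denseimm} with $H = \div{G}$; you merely spell out the verification of the hypotheses (density of the order on $\div{G}$ from divisibility, and $v\div{G} = vG \subseteq \omega$) that the paper leaves implicit.
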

	
	\begin{proof}
		Apply \thref{prop:denseimm} to $H = \div{G}$.
	\end{proof}
	
	Note that withouth the condition $vG \subseteq \omega$ in \thref{cor:denseimm}, the conclusion that $G$ is dense in $\div{G}$ does not hold in general, as the following example will show.
	
	\begin{example}
		Let $H$ be the Hahn product ${\H}_{\gamma \in \omega + 1} \Q$. Let $H'$ be the Hahn sum $H' = \coprod_{\gamma \in \omega + 1} \Q \subseteq H$. Note that $H'\subseteq H$ is an immediate extension (cf. \cite[p.~3]{kuhlmann}). It follows that for any ordered abelian groups $G_1$ and $G_2$ with $H'\subseteq G_1 \subseteq G_2 \subseteq H$, also the extension $G_1\subseteq G_2$ is immediate.
		
		Let $G\subseteq H$ be given by 
		$$
		G = H' + a\Z \text{, where }a = \sum_{\gamma \in \omega} \mathds{1}_\gamma.$$
		Now $\div{G} = H' + a\Q  \subseteq H$, and the extension $G \subseteq \div{G}$ is immediate. Let $c = \frac 1 2 a + \frac 1 3 \mathds{1}_\omega$ and $d = \frac 1 2 a + \frac 2 3 \mathds{1}_\omega$. Then $c,d \in \div{G}$ with $0<c<d$. However, there is no element in $G$ strictly between $c$ and $d$. Thus $G$ is not dense in $\div{G}$.
	\end{example}
	
	\begin{remark}
		Both in \thref{cor:densimpimm} and in \thref{cor:denseimm}, we imposed a condition on $vG$. Moreover, we provided counterexamples when this condition is not satified. In conclusion, there is exactly one case in which the condition on the value set of $G$ in \thref{cor:densimpimm} and \thref{cor:denseimm} are both satisfied, namely when $vG \cong \omega$. We obtain the following: Let $G$ be an ordered abelian group and $vG \cong \omega$. Then the extension $G\subseteq \div{G}$ is dense if and only if it is immediate.
	\end{remark}

	The final results of this section will be used in \Autoref{sec:lrdef} to compare $\Lr$- and $\Lor$-definability of henselian valuations with real closed residue field in almost real closed fields.
	
	\begin{proposition}\thlabel{prop:convdivlimit}
		Let $G$ be an ordered abelian group. Suppose that for every prime $p \in \N$, there is a $p$-divisible convex subgroup $G_p\neq \{0\}$ of $G$. Then $G$ is closed in $\div{G}$.
	\end{proposition}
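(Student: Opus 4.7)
The plan is to argue by contradiction. Suppose $G$ is not closed in $\div{G}$, so there is a limit point $a \in \div{G} \setminus G$; by symmetry I may assume $a$ is a left-sided limit point. Since $a \in \div{G}$, there is a smallest $n \geq 2$ with $na \in G$. Choose a prime $p$ dividing $n$ and write $n = pm$. The element $b := ma$ still lies in $\div{G} \setminus G$ by minimality of $n$, satisfies $pb = na \in G$, and remains a left-sided limit point of $G$: given any $\varepsilon > 0$ in $\div{G}$, divisibility produces $\varepsilon/m \in \div{G}$ with $\varepsilon/m > 0$, and any $g \in G \cap (a - \varepsilon/m, a)$ furnished by $a$ yields $mg \in G \cap (b - \varepsilon, b)$. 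Replacing $a$ by $b$, I may henceforth assume $pa \in G$.

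Next I use the hypothesis. Fix a non-trivial $p$-divisible convex subgroup $G_p$ of $G$ and pick $h \in G_p$ with $h > 0$. The limit point property applied to this $h$ produces $g \in G$ with $|a - g| < h$. Multiplying by $p$ gives $|pa - pg| < ph$, and $pa - pg$ lies in $G$ since both $pa$ and $pg$ do. As $ph \in G_p$, convexity of $G_p$ inside $G$ forces $pa - pg \in G_p$.

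Because $G_p$ is $p$-divisible, there exists $k \in G_p$ with $pk = pa - pg$. Then $p(a - g - k) = 0$ in the torsion-free group $\div{G}$, so $a = g + k \in G + G_p = G$, contradicting $a \notin G$. The main obstacle is the first step: one has to verify that the limit point property transports from $a$ to the integer multiple $ma$ and that $ma$ is still outside $G$, in order to reduce the problem from an arbitrary integer $n$ to a single prime $p$ matching the hypothesis. Once this reduction is in place, the divisibility-plus-convexity mechanism of absorbing the error $pa - pg$ inside $G_p$ closes the proof in a few lines.
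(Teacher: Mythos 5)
Your proof is correct, and its engine is the same as the paper's: approximate the bad point by some $g\in G$ to within a positive element of a divisible convex subgroup, use convexity to force the (scaled) difference into that subgroup, then use divisibility to pull the point back into $G$ for a contradiction. The two arguments differ in how they package this. The paper argues directly: it writes an arbitrary element of $\div{G}\setminus G$ as $\frac aN$, forms the intersection $H=\bigcap_i G_{p_i}$ over the prime divisors of $N$ (which, being the smallest of the $G_{p_i}$, is $N$-divisible by convexity), and shows the whole interval $\left(\frac aN-h,\frac aN+h\right)$ misses $G$, so no point of $\div{G}\setminus G$ is even a limit point. You instead start from a hypothetical limit point and reduce to a single prime by passing from $a$ to $ma$ where $n=pm$ is minimal with $na\in G$; the price is the (correctly verified) check that the limit-point property and non-membership in $G$ survive multiplication by $m$, and the gain is that you never need the intersection subgroup or its $N$-divisibility. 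Both are complete; the paper's version yields the slightly stronger conclusion explicitly (a full neighbourhood of each point of $\div{G}\setminus G$ avoiding $G$), while yours is marginally more self-contained at the prime level.
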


	\begin{proof}
		If $G$ is divisible, then the conclusion is trivial. Otherwise, let $a \in G$ and $N \in \N$ such that $\frac a N \in \div{G}\setminus G$. Let $N = p_1\ldots p_m$ be the prime factorisation of $N$. Consider the non-trivial $N$-divisible convex subgroup $H = \bigcap_{i=1}^m G_{p_i}$. Let $h \in H$ with $h>0$. Consider the interval $I=\brackets{\frac aN - h, \frac aN + h}$ in $\div{G}$. Assume that $ I \cap G \neq \emptyset$. Then for any $g \in I \cap G$ we have $v(Ng-a) = v\brackets{g-\frac aN} \geq v(h)$ and thus $Ng- a \in H$. Since $H$ is $N$-divisible, we obtain $g - \frac aN \in H$ and thus $\frac aN\in G$, a contradiction. This gives us  $I \cap G = \emptyset$, as required.
	\end{proof}
	
	\begin{corollary}\thlabel{cor:convdivlimit}
		Let $G$ be an ordered abelian group. Suppose that $G$ has a convex divisible subgroup $H \neq \{0\}$. Then $G$ is closed in $\div{G}$.
	\end{corollary}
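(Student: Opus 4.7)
The plan is to derive this as an immediate specialisation of \thref{prop:convdivlimit}. Since $H$ is divisible, it is in particular $p$-divisible for every prime $p \in \N$, so setting $G_p := H$ for every prime $p$ gives a family of non-trivial $p$-divisible convex subgroups of $G$ as required by the hypothesis of \thref{prop:convdivlimit}. Applying that proposition yields that $G$ is closed in $\div{G}$.

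There is essentially no obstacle here: the only thing to check is that a divisible abelian group is $p$-divisible for each prime $p$, which is immediate from the definition of divisibility, and that convexity of $H$ in $G$ transfers to convexity of the same set $G_p = H$ in $G$, which is trivial. Thus the proof reduces to a single sentence invoking \thref{prop:convdivlimit}.
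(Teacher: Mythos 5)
Your proposal is correct and is exactly the paper's own proof: set $G_p = H$ for every prime $p$ and invoke \thref{prop:convdivlimit}. Nothing further is needed.
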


	\begin{proof}
		For any prime $p$, let $G_p = H$. The conclusion follows from \thref{prop:convdivlimit}.
	\end{proof}
	
	The following example will show that the converse of \thref{prop:convdivlimit} does not hold.
	
	\begin{example}\thlabel{ex:conversedivclosed}
		Let $2=p_0<p_1<p_2<\ldots$ be a complete list of all prime numbers. For $n \in \N$, let $A_n$ be the following ordered abelian group
		$$A_n = \setbr{\left. \frac{a}{p_1^{m_1}\ldots p_n^{m_n}}  \ \right| a,m_1,\ldots,m_n \in \Z }.$$
		Then $A_n$ is $p_i$-divisible for $i=1,\ldots,n$. 
		Let $G = \coprod_{n \in \N} A_n$. Note that $\div{G} = \coprod_{n \in \N} \Q$. Since $A_n$ is not $2$-divisible for any $n \in \N$, there is no non-trivial $2$-divisble convex subgroup of $G$. Moreover, for any prime $p_i \neq 2$, the maximal $p_i$-divisible subgroup of $G$ is $ \coprod_{n \in \N_{\geq i}} A_n$.
		
		Let $s \in \div{G} \setminus G$ and $N = \min\setbr{ n \in \N \mid s(n) \notin A_n }$. Then for any $a \in \Q$ with $a> 0$ there is no element in $G$ between $s-a\mathds{1}_{N+1}$ and $s+a\mathds {1}_{N+1}$.
		Hence, $s$ is not a limit point of $G$ in $\div{G}$. Since $s$ was arbitrary, we obtain $\cl(G) = G$.
	\end{example}

	\section{Definable Convex Valuations}\label{sec:defconvval}
	
	In this section, we firstly investigate what covex valuations are $\Lor$-de\-fin\-a\-ble in ordered fields and secondly we compare our results to known $\Lr$-definability results of henselian valuations with special focus on almost real closed fields.

	\subsection{$\Lor$-definability}\label{subsec:def}
	
	We are going to analyse the construction method of $\Lor$-definable convex valuations from \cite[Proposition~6.5]{jahnke}.
	
	\begin{fact}\thlabel{prop:defval}
		Let $(K,<)$ be an ordered field. Then at least one of the following holds.
		\begin{enumerate}[wide, labelwidth=!, labelindent=6pt]
			\item $K$ is dense in its real closure.
			\item $K$ admits a non-trivial $\L_{\mathrm{or}}$-definable convex\footnote{\cite[Proposition~6.5]{jahnke} only states that $K$ admits a non-trivial $\Lor$-definable valuation, but the proof indeed gives a construction method for a non-trivial $\Lor$-definable \emph{convex} valuation.} valuation.
		\end{enumerate}
	\end{fact}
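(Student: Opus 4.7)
The plan is to adapt the construction of Proposition~\ref{prop:subgroupdef} to the field setting: use the gap witnessing failure of density in $\rc{K}$ to build a proper non-trivial convex $\Lor$-definable additive subgroup of $(K,+)$, and then upgrade it to a convex subring via a "dilation-stabiliser" construction.

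Suppose (1) fails, so there exist $a_0 \in \rc{K} \setminus \cl(K)$ and $\delta \in \rc{K}^{>0}$ with $(a_0 - \delta, a_0 + \delta) \cap K = \emptyset$. After replacing $a_0$ by a translate $a_0 + n$ ($n \in \Z$) or by $1/a_0$, we may assume $a_0$ is sandwiched by $K$-elements. Since $a_0 \in \rc{K}$, its minimal polynomial $p(x) \in K[x]$ is squarefree (as $\Char K = 0$); let $\alpha_1 < \ldots < \alpha_r$ be the real roots of $p$ in $\rc{K}$, with $a_0 = \alpha_j$. Pick $q, q' \in K$ with $\alpha_{j-1} < q < a_0 < q' < \alpha_{j+1}$ (with the obvious conventions for $j = 1$ or $j = r$). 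On $(q, q') \cap K$ the polynomial $p$ does not vanish and changes sign exactly at $a_0$, so
$$D \ := \ \{g \in K : q < g < q' \text{ and } p(g)\cdot p(q) > 0\}$$
is an $\Lor$-definable subset of $K$ coinciding with the $K$-cut $\{g \in K : q < g < a_0\}$.

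Mimicking Proposition~\ref{prop:subgroupdef}, set $A := \{h \in K^{\geq 0} : h + D \subseteq D\}$, an $\Lor$-definable convex subset of $K^{\geq 0}$. Since $D$ is bounded above by $a_0$, we have $A \neq K^{\geq 0}$; the gap of size $\delta$ around $a_0$ forces $A \neq \{0\}$ (any positive $K$-element smaller than $\delta$ lies in $A$, exactly as in the group-theoretic argument of Proposition~\ref{prop:subgroupdef}). Hence $H := (-A) \cup A$ is a proper non-trivial convex $\Lor$-definable additive subgroup of $K$. I then upgrade $H$ to a convex subring by
$$O \ := \ \{x \in K : x \cdot H \subseteq H\}.$$
Convexity of $H$ yields convexity of $O$; the identity $(x+y)H \subseteq xH + yH \subseteq H$ yields closure under addition; associativity of multiplication yields closure under multiplication; and $\Z \subseteq O$ is immediate. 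Since $O$ is a convex subring of $K$ containing $1$, it is automatically the valuation ring of a convex valuation on $K$, and picking any $h_0 \in H\setminus\{0\}$ and any $b \in K \setminus H$ shows $b\,h_0^{-1} \notin O$, so this valuation is non-trivial.

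The main technical obstacle is ensuring $A \neq \{0\}$: the gap $\delta$ lives in $\rc{K}$, and translating $D$ into itself requires a genuine positive $K$-element smaller than $\delta$. If no such element exists (i.e.\ $\delta$ is smaller than every positive element of $K$), the construction must be adjusted --- e.g.\ by further preprocessing $a_0$ (scaling by a $K$-element, or replacing it by a different element of $\rc{K}\setminus\cl(K)$ whose gap is $K$-visible) before defining $D$. The careful handling of these parameter choices is presumably the substantive content of the construction given in \cite[Proposition~6.5]{jahnke}.
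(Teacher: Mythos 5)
Your overall strategy is the one the paper imports from \cite[Proposition~6.5]{jahnke} and records as \thref{constr:val}: realise the lower cut of a point of $\rc{K}\setminus\cl(K)$ as an $\Lor$-definable set $D$, pass to the additive stabiliser $A$ of $D$, and then to the multiplicative stabiliser of $-A\cup A$, which is a non-trivial convex valuation ring. The stabiliser part of your argument is correct. The genuine gap is in your definability argument for $D$: you assume one can choose $q,q'\in K$ separating $a_0$ from the neighbouring real roots of its minimal polynomial, and this can fail. Take $K=\Q(t)$ with $t$ a positive infinitesimal and $a_0=\sqrt{2+\sqrt{t}}$. Then $p(x)=(x^2-2)^2-t$ is irreducible over $K$ and $a_0\notin\cl(K)$ (every finite element of $\Q(t)$ has rational standard part, so none is infinitely close to $\sqrt{2}$); but the adjacent root $\sqrt{2-\sqrt{t}}$ is also infinitely close to $\sqrt{2}$, so no $q\in K$ lies between it and $a_0$. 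Worse, $p$ is strictly positive on all of $K$, so for any admissible $q,q'$ your set $\setbr{g\in K \mid q<g<q' \wedge p(g)p(q)>0}$ is the whole of $(q,q')\cap K$ rather than the cut below $a_0$, and the construction collapses (one gets $A=\{0\}$). The robust way to define the cut --- and the mechanism the paper itself exploits in \thref{lem:densedc} --- is via definable Skolem functions and quantifier elimination for real closed fields: $a_0\in\dcl(K;\rc{K})$, so $a_0=f(\ul{a})$ for some $0$-definable function $f$ and $\ul{a}\in K$, and a quantifier-free formula equivalent to $z<f(\ul{x})$ defines $\setbr{z\in K\mid z<a_0}$ inside $K$.

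By contrast, the point you single out as ``the main technical obstacle'', namely $A\neq\{0\}$, is not an obstacle at all. Since $\rc{K}/K$ is algebraic, every element of $\rc{K}$ is bounded in absolute value by an element of $K$; applying this to $1/\delta$ shows that $K^{>0}$ is coinitial in $\rc{K}^{>0}$, so there is $h\in K^{>0}$ with $h\leq\delta$, and for such $h$ one checks $h+D\subseteq D$ exactly as in \thref{prop:subgroupdef}. Deferring this to the cited reference leaves your write-up formally incomplete, but unlike the definability of $D$ it is immediately repaired.
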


	We will summarise the construction procedure of a non-trivial $\L_{\mathrm{or}}$-definable convex valuation ring of an ordered field which is not dense in its real closure given in \cite[p.~163~f.]{jahnke}. For an ordered field $(K,<)$, we denote its real closure by $\rc{K}$ and its topological closure in $\rc{K}$ under the order topology by $\cl(K)$.
	
	\begin{construction}\thlabel{constr:val}
		Let $(K,<)$ be an ordered field. Suppose that $K$ is not dense in $R=\rc{K}$. Let $s\in R\setminus \cl(K)$. Set $D_s:=\{z\in K \mid z < s\}$ and $A_s := \{x \in K^{\geq0} \mid x+D_s\subseteq D_s\}$. Set $\OO_s := \{x \in K \mid |x|A_s \subseteq A_s\}$. Then $\OO_s$ is a non-trivial $\Lor$-definable convex valuation ring of $K$.
	\end{construction}
	
	The following proposition shows that for an ordered Hahn field $K=k\pow{G}$, we can apply \thref{constr:val} to any $s\in R\setminus K$.
	
	\begin{proposition}\thlabel{prop:hahnclosed}
		Let $(k,<)$ be an ordered field and $G\neq \{0\}$ an ordered abelian group. Then $\cl\brackets{k\pow{G}} = k\pow{G}$.
	\end{proposition}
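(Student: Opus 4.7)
My plan is to exploit the classical fact that the Hahn field $K = k\pow{G}$ equipped with its valuation $\vmin$ is maximally valued (Krull; see \cite{kuhlmann}), so that no element of $R := \rc{K}$ lying outside $K$ can be arbitrarily well approximated by elements of $K$ in the $\vmin$-topology. Convexity of the extended valuation should then transfer this valuation-theoretic separation to the order topology. Concretely, I would first extend $\vmin$ uniquely to a henselian valuation $v$ on $R$, which is convex on $R$ by \thref{fact:hensconv}; since $R/K$ is algebraic, $vR = \div{G}$, and since $G \neq \{0\}$, $\div{G}$ is a non-trivial divisible ordered abelian group and hence has no largest element.

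Next, fix $s \in R \setminus K$ and show that the approximation set $S := \setbr{v(s - a) \mid a \in K}$ is bounded above in $\div{G}$. Assume it is not. Then one can select a sequence $(a_n)_{n \in \omega}$ in $K$ with $v(s - a_n)$ strictly increasing and cofinal in $\div{G}$. The ultrametric inequality gives $v(a_m - a_n) = v(s - a_m)$ for $m < n$, so $(a_n)$ is a pseudo-Cauchy sequence in $(K, \vmin)$; maximality supplies a pseudo-limit $b \in K$, and the standard estimate $v(s - b) \geq v(s - a_n)$ for every $n$ combined with cofinality forces $v(s - b) = \infty$, i.e.\ $s = b \in K$, contradicting $s \notin K$.

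To finish, pick any upper bound $\gamma_0 \in \div{G}$ of $S$ and choose $\epsilon \in R$ with $\epsilon > 0$ and $v(\epsilon) > \gamma_0$ (possible since $\div{G}$ has no largest element). For every $a \in K$, the inequality $v(s - a) \leq \gamma_0 < v(\epsilon)$ combined with convexity of $v$ forces $|s - a| > \epsilon$; hence $(s - \epsilon, s + \epsilon) \cap K = \emptyset$ and $s \notin \cl(K)$. As $s \in R \setminus K$ was arbitrary, $\cl(k\pow{G}) = k\pow{G}$. The only non-routine input is the maximality of Hahn fields with respect to $\vmin$, which is classical and can be cited; everything else is bookkeeping with the ultrametric inequality and convexity translating $\vmin$-largeness into order-largeness.
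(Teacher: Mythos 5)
Your argument is correct in substance but takes a genuinely different route from the paper's. The paper works inside the real closed Hahn field $\rc{k}\pow{\div{G}}$, into which $R=\rc{K}$ embeds: given $s\in R\setminus K$ it isolates the monomial $at^{g_0}$ of least exponent witnessing $s\notin K$ and observes that every element of the interval $(s-t^{g_1},s+t^{g_1})$, for $g_1\in G$ with $g_1>g_0$, carries that same monomial and hence also lies outside $K$; this exhibits an explicit neighbourhood of $s$ disjoint from $K$. You instead invoke Krull's theorem that $(k\pow{G},\vmin)$ is maximally valued, use pseudo-Cauchy sequences to show that your set $S=\setbr{v(s-a)\mid a\in K}$ is bounded above in $\div{G}$, and then convert this valuation-theoretic gap into an order-theoretic one via convexity of the unique henselian extension of $\vmin$ to $R$. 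Your proof is less explicit but more general: it really shows that any ordered field which is maximal with respect to a convex valuation with non-trivial value group is closed in its real closure, whereas the paper's truncation argument is tied to the series representation; the price is reliance on the (classical but nontrivial) maximality theorem. The one point you should repair is the indexing of your pseudo-Cauchy sequence: if $\div{G}$ has uncountable cofinality there is no $\omega$-indexed strictly increasing sequence in $S$ cofinal in $\div{G}$, so you must take a transfinite sequence $(a_\rho)_{\rho<\lambda}$ with $\lambda$ the cofinality of $S$. Since Kaplansky's notion of pseudo-Cauchy sequence and the maximality of Hahn fields are formulated for arbitrary well-ordered index sets, the remainder of your argument (the ultrametric computation, the existence of a pseudo-limit $b\in K$, and the conclusion $v(s-b)=\infty$) goes through verbatim.
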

	
	\begin{proof}
		Let $K = k\pow{G}$ and $ R = \rc{K}$. We need to show that $R\setminus K$ is open in $R$. If $K$ is real closed, then $R\setminus K = \emptyset$. Hence, assume that $K$ is not real closed.
		Let $s \in R \setminus K$. Then $s$ is of the form $s = s_1 + at^{g_0} + s_2$ for some $s_1 \in k\pow{G^{<g_0}}$, $s_2 \in \rc{k}\pow{\brackets{\div{G}}^{>g_0}}$ and $at^{g_0} \notin K$. In other words, $at^{g_0}$ is the monomial of $s$ of least exponent which is not contained in $K$. Let $g_1 \in G^{>g_0}$. Then the open interval
		$$I=(s-t^{g_1},s+t^{g_1}) \subseteq R$$
		contains $s$. However, any element in $I$ contains a monomial of the form $at^{g_0}$ and is thus not contained in $K$. Hence, $s$ is contained in an open neighbourhood in $R \setminus K$, as required.
	\end{proof}
	
	\begin{remark}\thlabel{rmk:denseimmediatefield}
		Let $(k,<)$ be an ordered field and $G\neq \{0\}$ an ordered abelian group. \thref{prop:hahnclosed} shows, in particular, that if $k\pow{G}$ is not real closed, then it is not dense in its real closure.
		Note that this can also be shown as follows: 
		Suppose that $k\pow{G}$ is dense in $\rc{k\pow{G}}$. \cite[Lemma~1.31]{kuhlmann} shows that any dense extension of ordered field is immedate. Thus, $k\pow{G} \subseteq \rc{k\pow{G}}$ is an immediate extension with respect to the valuation $\vmin$. Hence, $k= \rc{k}$ and $G = \div{G}$. This implies that $k\pow{G}$ is real closed.
	\end{remark}
	
	Recall the notion of limit points for ordered abelian groups from \Autoref{sec:densdivhull}. We use a similar notion for (left-, right-sided) limit points of an extension of ordered fields.
	
	\begin{theorem}\thlabel{lem:defvalnew}\thlabel{thm:defval}
		Let $(L,<)$ be an ordered field and $w$ a henselian valuation on $L$. Suppose that at least one of the following holds.
		
		\begin{enumerate}[wide, labelwidth=!, labelindent=6pt]
			\item\label{lem:defvalnew:1} $wL$ is discretely ordered.
			
			\item\label{lem:defvalnew:2} $wL$ has a limit point in $\div{wL}\setminus wL$.
			
			\item\label{lem:defvalnew:3} $Lw$ has a limit point in $\rc{Lw}\setminus Lw$.
		\end{enumerate}
		Then $w$ is $\Lor$-definable in $L$. Moreover, in the cases (\ref{lem:defvalnew:1}) and (\ref{lem:defvalnew:2}), $w$ is definable by an $\Lor$-formula with one parameter.
	\end{theorem}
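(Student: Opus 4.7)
The plan is to apply \thref{constr:val} to a suitable $s \in \rc{L} \setminus \cl(L)$ in each of the three cases, chosen so that the resulting $\Lor$-definable convex valuation ring $\OO_s$ coincides with $\OO_w$. Henselianity of $w$ guarantees that $w$ extends uniquely to $\rc{L}$, so valuation-theoretic arguments on $\rc{L}$ are justified; by \thref{fact:hensconv}, $w$ is convex, so for nonzero $a,b \in \rc{L}$ one has $|a| \leq |b| \Leftrightarrow w(a) \geq w(b)$, which will control all gap estimates below.

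For cases~(\ref{lem:defvalnew:1}) and~(\ref{lem:defvalnew:2}), I would choose $s$ with $w(s) \in \div{wL} \setminus wL$. In case~(\ref{lem:defvalnew:1}), let $\pi \in L^{>0}$ be a uniformizer (so $w(\pi)$ is minimal positive in $wL$) and set $s = \sqrt{\pi}$, giving $w(s) = w(\pi)/2$. In case~(\ref{lem:defvalnew:2}), let $\gamma \in \div{wL} \setminus wL$ be the limit point (necessarily two-sided in an ordered abelian group, by the standard $g \mapsto 2g' - g$ trick), write $\gamma = \gamma'/N$ with $\gamma' \in wL$ and $N \geq 2$, pick $a \in L^{>0}$ with $w(a) = \gamma'$, and set $s = a^{1/N}$. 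For every $b \in L$ we have $w(b) \neq w(s)$ (as $w(s) \notin wL$), so $w(b-s) = \min(w(b), w(s)) \leq w(s)$; picking any $c \in L^{>0}$ with $w(c) > w(s)$ (e.g.\ $c = \pi$ or $c = a$) yields $|b - s| > c$, hence $(s - c, s + c) \cap L = \emptyset$ and $s \notin \cl(L)$. Since for $z \in L^{>0}$ the condition $z < s$ translates to $z^2 < \pi$ (resp.\ $z^N < a$), the set $D_s = L^{\leq 0} \cup \{z \in L^{>0} : z^2 < \pi\}$ (resp.\ $z^N < a$) is $\Lor$-definable with one parameter, and hence so is $\OO_s$ by \thref{constr:val}. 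A direct computation gives $A_s = \{x \in L^{\geq 0} : w(x) > w(s)\}$, and using the discreteness of $wL$ in case~(\ref{lem:defvalnew:1}) or the limit-point property of $\gamma$ in case~(\ref{lem:defvalnew:2}), one verifies $\OO_s = \OO_w$.

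For case~(\ref{lem:defvalnew:3}), I would apply henselianity to lift the limit point $\overline{s} \in \rc{Lw} \setminus Lw$ to $\rc{L}$: take a lift $p \in (\OO_w \cap L)[X]$ of the minimal polynomial $\overline{p}$ of $\overline{s}$ over $Lw$; since $\rc{L}$ is henselian with residue field $\rc{Lw}$, the simple root $\overline{s}$ lifts to $s \in \rc{L}$ with $w(s) = 0$ and residue $\overline{s}$. For $b \in L$ we have $\overline{b} \neq \overline{s}$ (as $\overline{b} \in Lw$), so either $w(b) \neq 0$ giving $w(b-s) \leq 0$, or $w(b) = 0$ and $\overline{b - s} = \overline{b} - \overline{s} \neq 0$ giving $w(b-s) = 0$; either way $|b - s|$ exceeds every positive element of $\MM_w$, whence $s \notin \cl(L)$. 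The cut $D_s$ is $\Lor$-definable in $L$ from the coefficients of $p$ (in general more than one parameter, by quantifier elimination for RCF applied to $p$). An analogous computation, now using the limit-point property of $\overline{s}$ on the residue side to prevent units from lying in $A_s$, yields $A_s = \MM_w \cap L^{\geq 0}$ and $\OO_s = \OO_w$. The main obstacle across all three cases will be the explicit verification that $\OO_s = \OO_w$ rather than a strict coarsening: each of the three hypotheses (discreteness of $wL$, limit point in $\div{wL}$, limit point in $\rc{Lw}$) is precisely what is needed to rule out elements $x$ with $w(x) < 0$ from lying in $\OO_s$.
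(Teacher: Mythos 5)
Your proposal is correct in substance but follows a genuinely different route from the paper. The paper does not work in $L$ at all: it passes via the Ax--Kochen--Ershov principle to the elementarily equivalent Hahn field $Lw\pow{wL}$, carries out (a simplified form of) \thref{constr:val} there on explicit elements ($s=t^{g_0}$ for $g_0\in\div{wL}\setminus wL$, respectively the interval $(a-1,a)$ around a residue-field limit point), verifies $\OO_s$ equals the valuation ring of $\vmin$ by direct computation with supports, and then pulls the defining formula back to $L$ by elementary equivalence (existentially quantifying the parameter). You instead stay inside $L$ and $\rc{L}$, using the unique henselian extension of $w$ to $\rc{L}$ (residue field $\rc{Lw}$, value group $\div{wL}$) together with $N$-th roots and Hensel's lemma to manufacture a concrete $s\in\rc{L}\setminus\cl(L)$ with $w(s)\notin wL$ (cases (1),(2)) or $\ol{s}\notin Lw$ (case (3)), and then run \thref{constr:val} on it; the computations of $A_s$ and $\OO_s$ you assert do go through, and your explicit formulas $z^2<\pi$, $z^N<a$ correctly yield the one-parameter claim. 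What each approach buys: yours avoids AKE and produces the defining parameters explicitly in $L$; the paper's avoids any facts about the valued-field structure of $\rc{L}$ and keeps all verifications at the level of formal series. Two small caveats on your write-up: the stated equivalence $|a|\leq|b|\Leftrightarrow w(a)\geq w(b)$ is false as a biconditional (only $0<|a|\leq|b|\Rightarrow w(a)\geq w(b)$ holds for a convex valuation --- e.g.\ $a=2$, $b=1$ --- though every use you make of it is of the correct contrapositive $w(a)<w(b)\Rightarrow|a|>|b|$); and in case (3) your argument that $s\notin\cl(L)$ needs $w$ non-trivial (the trivial case being vacuous anyway) and an explicit reduction to the left-sided case, analogous to the paper's replacement of $a$ by $-a$, rather than an appeal to two-sidedness.
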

	
	\begin{proof}
		Let $k=Lw$ and $G=wL$. 
		Note that $L$ is not real closed, as in each case either $k$ is not real closed or $G$ is non-divisible. 
		By the Ax--Kochen--Ershov Principle, $(L,<, v) \equiv (k\pow{G},<, \vmin)$. Let $K = k\pow{G}$, $v = \vmin$ and $R = \rc{K}$.
		We will apply \thref{constr:val} with some simplifications to define the valuation ring $k\pow{G^{\geq 0}}$ of $v$ in $K$. \thref{prop:hahnclosed} shows that we can apply the construction procedure to any element in $s \in  R \setminus K$.
		
		First suppose that $G$ is non-divisible. Let $g_0 \in \div{G}\setminus G$ and $s = t^{g_0}$.
		Consider the $\Lor$-definable set $D_s' = \setbr{x \in K^{\geq 0} \mid x < t^{g_0}}$. Since $g_0 \in \div{G}$, there is some $h \in G$ and $N \in \N$ such that $g_0 = \frac{h}{N}$. Thus, the set $D_s'$ is defined by the $\Lor$-formula with one parameter $$x \geq 0 \wedge x^N < t^h.$$ Note that for any $x \in K^{\geq0}$, we have $x \in D_s'$ if and only if $v(x) > g_0$. Thus, $D_s' = k\pow{G^{>g_0}}^{\geq0}$. 
		Let $\OO_s = \{x \in K \mid |x|D_s' \subseteq D_s'\}$. Note that this set is $\Lor$-definable with one parameter. By definition, $\OO_s$ contains exactly those elements in $K$ such that for any $y\in K^{\geq 0}$ with $v(y) > g_0$ we have \begin{align}v(x)+v(y) = v(xy) > g_0. \label{eq:cond1}\end{align} In particular, for any $x \in K$ with $v(x) \geq 0$, condition \eqref{eq:cond1} holds. Thus, $k\pow{G^{\geq0}} \subseteq \OO_s$. To show the other set inclusion, we will make a case distinction, also specifying the element $g_0$ for the densely ordered case.
		
		Suppose that $G$ is discretely ordered. Let $g_1 \in G$ be the least element greater than $g_0$ and let $g_2 \in G$ be the least element greater than $g_0-g_1$. Then $g_2+g_1$ is the least element greater than $g_0$. By choice of $g_1$, this gives us $g_2+g_1 = g_1$ and thus $g_2 = 0$.
		Let $x \in \OO_s$. Since $t^{g_1} \in D_s'$, we have $v(xt^{g_1}) = v(x) + g_1 > g_0$. Hence, $v(x) > g_0 - g_1$. By choice of $g_2$ as the least element greater than $g_0 - g_1$, we obtain $v(x) \geq g_2 = 0$. This implies $\OO_s \subseteq k\pow{G^{\geq0}}$, as required.
		
		Suppose that $G$ has a limit point in $\div{G}\setminus G$. In this case, we choose $g_0 \in \div{G}\setminus G$ such that $g_0$ is a limit point of $G$. We may assume that $g_0$ is a right-sided limit point, as otherwise we can replace it by $-g_0$. Let $x \in K \setminus k\pow{G^{\geq0}}$, i.e. $v(x) < 0$. Since $g_0$ is a right-sided limit point of $G$ in $\div{G}$, the interval $(g_0, g_0 - v(x))\subseteq \div{G}$ contains some element $g_1 \in G$. Thus, $g_1 > g_0$ but $v(x) + v(t^{g_1}) = v(x) + g_1 < g_0$. This shows that $x$ does not fulfil condition \eqref{eq:cond1}, whence $x \notin \OO_s$. We thus obtain $\OO_s \subseteq k\pow{G^{\geq0}}$.
		
		Now suppose that $k$ is not real closed and has a limit point $a$ in $\rc{k}\setminus k$. We may assume that $a$ is a left-sided limit point, as otherwise we can replace it by $-a$. Then $D_a' = \{x \in K \mid a- 1 < x < a\}$ consists exactly of the elements of the form $b + r$, where $b\in k$ such that $ a-1 < b < a$ and $r \in k\pow{G^{>0}}$. In other words, $D_a' = I + k\pow{G^{>0}}$, where $I$ is the convex set $(a-1,a)$ in $k$. Note that $I$ is non-empty, as $a$ is a left-sided limit point of $k$. Let $A_a'$ be the $\Lor$-definable set $\setbr{x \in K^{\geq 0} \mid x + D_a' \subseteq D_a'}$. Since $k\pow{G^{>0}}$ is closed under addition, we have $k\pow{G^{>0}} + D_a' \subseteq D_a'$. Thus, $k\pow{G^{>0}}^{\geq 0} \subseteq A_a'$. For the other inclusion, let $x \in K^{\geq0} \setminus k\pow{G^{>0}}$, i.e. $v(x) \leq 0$ and $x \geq 0$. If $v(x) > 0$, then $x + b \notin D_a'$ for any $b \in I$. Thus, $x \notin A_a'$. Suppose that $v(x) = 0$. Then $x$ is of the form $c + r$ with $c \in k^{>0}$ and $r \in k\pow{G^{>0}}$. If $c \geq 1$, then $x + b \notin D_a'$ for any $b \in I$, whence $x \notin A_a'$. If $c < 1$, let $b \in k\cap (a-c,a)$, which exists, as $a$ is a left-sided limit point of $k$. Then $x + b = (c+b) + r > a + r$. Thus, $x + b \notin D_a'$ and $x \notin A_a'$. 
		Hence, we have shown that $A_a' \subseteq k\pow{G^{>0}}^{\geq0}$. 
		
		Now $(-A_a' \cup A_a') = k\pow{G^{>0}}$ is the maximal ideal of the valuation ring $k\pow{G^{\geq 0}}$. Thus, the valuation ring $k\pow{G^{\geq 0}} = \{x \in K \mid x(-A_a' \cup A_a') \subseteq (-A_a' \cup A_a')\}$ is $\Lor$-definable.
		
		Now for any of the three cases, there is an $\Lor$-formula $\varphi(x,\ul{y})$ (where in the cases (\ref{lem:defvalnew:1}) and (\ref{lem:defvalnew:2}) $\ul{y}$ is just one free variable) such that $(K,<,v) \models \exists \ul{y}\forall x \ (\varphi(x,\ul{y}) \leftrightarrow v(x) \geq 0)$. By elementary equivalence, there is some $\ul{b} \in L$ such that $(L,<,w) \models \forall x \ (\varphi(x,\ul{b}) \leftrightarrow w(x) \geq 0)$. In other words, $\varphi(x,\ul{b})$ defines $w$ in $L$, as required.
	\end{proof}

	\thref{lem:defvalnew} is not a full characterisation of all $\Lor$-definable henselian valuations on an ordered field. Indeed, we can choose $K = \R\pow{G}$ for $G$ as in \thref{ex:conversedivclosed}. Then $\vmin = v_0$ (see p.~\pageref{def:v0}) satisfies neither of the conditions of \thref{lem:defvalnew}, but $v_0$ is even $\Lr$-definable by \thref{fact:thm44} for $p=2$.

	\begin{corollary}\thlabel{cor:kdensedef}
		Let $(L,<)$ be an ordered field and $w$ a henselian valuation on $L$. Suppose that $Lw$ is not real closed and dense in $\rc{Lw}$.
		Then $w$ is $\Lor$-definable in $L$.
	\end{corollary}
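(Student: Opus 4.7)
The plan is to reduce the statement directly to condition (\ref{lem:defvalnew:3}) of \thref{thm:defval}. Since $Lw$ is assumed to be not real closed, the set $\rc{Lw} \setminus Lw$ is non-empty, so I would pick any element $a \in \rc{Lw} \setminus Lw$. The only thing left to check is that $a$ is a limit point of $Lw$ in $\rc{Lw}$, and then \thref{thm:defval} gives the $\Lor$-definability of $w$ immediately.

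For the limit point claim, I would argue as follows. Since $Lw$ is dense in $\rc{Lw}$, for every $\varepsilon \in \rc{Lw}$ with $\varepsilon > 0$ the open intervals $(a - \varepsilon, a)$ and $(a, a + \varepsilon)$ in $\rc{Lw}$ each contain some element of $Lw$ (in fact, both do, but we only need one side). Hence $a$ is (both a left- and right-sided) limit point of $Lw$ in $\rc{Lw}$, so condition (\ref{lem:defvalnew:3}) of \thref{thm:defval} is satisfied.

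I do not anticipate any real obstacle: the corollary is essentially a translation of the hypotheses into the language of \thref{thm:defval}(\ref{lem:defvalnew:3}). The only subtlety worth flagging is the implicit use of the fact that ``dense in the real closure'' means density in the order topology of $\rc{Lw}$, which is exactly the notion of limit point used in the theorem; once this is noted the argument is a one-line application.
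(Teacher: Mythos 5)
Your proposal is correct and is essentially identical to the paper's own proof: the paper also deduces the corollary directly from \thref{thm:defval}~(\ref{lem:defvalnew:3}) by observing that density of $Lw$ in $\rc{Lw}$ makes every point of $\rc{Lw}\setminus Lw$ a limit point of $Lw$. Your extra remark verifying the limit-point condition from the definition of density is a harmless elaboration of the same one-line argument.
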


	\begin{proof}
		This follows immediately from \thref{lem:defvalnew}~(\ref{lem:defvalnew:3}), as any point in $\rc{Lw}\setminus Lw$ is a limit point of $Lw$ in $\rc{Lw}$.
	\end{proof}
	
	\begin{remark}\thlabel{rmk:defthm}
		\begin{enumerate}[wide, labelwidth=!, labelindent=6pt]
			\item\label{rmk:defthm:1} Let $(k,<)$ be an archimedean ordered field. Then $\Q \subseteq k \subseteq \rc{k} \subseteq \R$. Since $\Q$ is dense in $\R$, also $k$ is dense in $\rc{k}$. In other words, any archimedean ordered field is dense in its real closure. Thus, a special case of \thref{cor:kdensedef} is the following: Let $(L,<)$ be a non-archimedean ordered field and let $w$ be a henselian valuation on $L$ such that $(Lw,<)$ is archimedean and not real closed. Then $w$ is $\Lor$-definable in $L$.
			
			\item \thref{lem:defvalnew}~(\ref{lem:defvalnew:2}) implies a similar version of \thref{cor:kdensedef} if $wL$ is non-divisible and dense in $\div{wL}$. However, we will see in \Autoref{sec:lrdef} that under this condition we already have that $w$ is $\Lr$-definability without parameters.
		\end{enumerate}
	\end{remark}
	
	\noindent {\bf Comparison to $\Lr$-definability.}\label{sec:lrdef} 
	There is a vast collection of results giving conditions on $\Lr$-definability of henselian valuations in pure fields, many of which are from recent years (see e.g. \cite{delon, hong, jahnke4, prestel, jahnke5}). A survey on $\Lr$-definability of henselian valuations is given in \cite{fehm}. We will give a brief account of the known $\Lr$-definability result of henselian valuations in the case that the value group is regular (cf. \cite[Theorem~4]{hong}) and compare this to \thref{lem:defvalnew}.
	
	\begin{fact}\emph{(See \cite[Theorem~4]{hong}.)}\thlabel{fact:hong}
		Let $K$ be a field and $v$ a henselian valuation on $K$. Suppose that $vK$ is regular and non-divisible. Then $v$ is parameter-free $\Lr$-definable in $K$.
	\end{fact}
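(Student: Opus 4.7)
The strategy is to exploit the rigid structure of regular non-divisible ordered abelian groups and combine it with the machinery of canonical $p$-henselian valuations. The first step is to select an appropriate prime. By \thref{fact:regrobinson}~(\ref{fact:regrobinson:1}), $vK$ has an archimedean model, and every archimedean ordered abelian group $A$ satisfies $|A/pA|\leq p$ for each prime $p$; this property is first-order and transfers to $vK$. Non-divisibility of $vK$ then yields a prime $p$ with $|vK/pvK|=p$, which we fix for the remainder of the argument.

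The second step is to analyse the convex subgroup structure of $vK$ with respect to $p$. I would show that no non-trivial convex subgroup $H\subseteq vK$ can be $p$-divisible: by regularity and \thref{fact:defregular}, $vK/H$ is divisible, so $vK = pvK + H$; if additionally $H = pH$, then $H \subseteq pvK$, forcing $vK = pvK$, contrary to the choice of $p$. Hence every proper non-trivial coarsening of $v$ has $p$-divisible value group, while $v$ itself is singled out among the henselian valuations on $K$ by the failure of $p$-divisibility on its value group.

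The third step is to invoke the definability theorem for canonical $p$-henselian valuations (Jahnke--Koenigsmann), which provides a parameter-free $\Lr$-formula defining the canonical $p$-henselian valuation $v_K^p$ whenever $K$ admits a non-trivial $p$-henselian valuation and is not $p$-closed. Since $v$ is henselian and hence $p$-henselian, with $vK/pvK \neq 0$ witnessing non-$p$-closedness of $K$, these hypotheses hold. I would then identify $\mathcal{O}_v$ with $\mathcal{O}_{v_K^p}$ (or with an $\emptyset$-definable coarsening or refinement thereof) using the convex-subgroup rigidity from the second step: the canonical $p$-henselian valuation must detect the non-$p$-divisibility, and the analysis above shows this can only happen at the level of $v$.

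The main obstacle is the third step, namely matching $v$ precisely to a canonical $\emptyset$-definable valuation. This requires a case distinction depending on whether $Kv$ contains a primitive $p$-th root of unity, which determines whether $v_K^p$ is specified as the coarsest or the finest $p$-henselian valuation with a suitable residue condition. In either case one must verify, using the rigidity of convex subgroups established above, that $v_K^p = v$, and in borderline cases recover $v$ from $v_K^p$ by composing with a further parameter-free $\Lr$-definable henselian coarsening coming from the $\Z$-group structure of $vK/H_0$, where $H_0$ is the (trivial or canonical) maximal $p$-divisible convex subgroup.
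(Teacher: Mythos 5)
First, note that the paper contains no proof of this statement: it is imported as a black box from Hong's Theorem~4, so there is no in-paper argument to compare against, and your sketch has to be judged as a proof of Hong's theorem itself. Its broad strategy --- fix a prime $p$ with $pvK\neq vK$, use regularity to rule out non-trivial $p$-divisible convex subgroups, then appeal to the definability theory of canonical $p$-henselian valuations --- is indeed the standard route to results of this kind, and your second step is correct and is the right structural input. Your first step, however, contains a false claim: an archimedean ordered abelian group need not satisfy $|A/pA|\leq p$. For instance $A=\Z+\Z\sqrt{2}\subseteq\R$ is archimedean and free of rank $2$, so $|A/2A|=4$; hence ``$|vK/pvK|=p$'' is not available. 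This is harmless only because all you actually use is the existence of a prime $p$ with $pvK\neq vK$, which follows at once from non-divisibility without any appeal to \thref{fact:regrobinson}.

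The genuine gap is your third step, which is where essentially all of the content of the theorem sits and which you leave as an acknowledged obstacle rather than resolve. Three concrete problems. (i) Parameter-free definability of the canonical $p$-henselian valuation $v_K^p$ is directly available only when $\zeta_p\in K$, $p=2$, or $p=\Char(K)$; for odd $p$ with $\zeta_p\notin K$ one must pass to $K(\zeta_p)$ and descend along this Galois extension via an interpretation argument, a step your sketch omits entirely. (ii) The statement is about arbitrary fields, so $\Char(Kv)=p$ can occur and the Kummer-theoretic picture must be replaced by the Artin--Schreier one. (iii) Most seriously, your rigidity analysis does not single out $v$: it shows that no proper \emph{coarsening} of $v$ has non-$p$-divisible value group, but proper \emph{refinements} of $v$ also have non-$p$-divisible value groups, and $v_K^p$ is strictly finer than $v$ whenever $Kv$ admits a non-trivial $p$-henselian valuation with non-$p$-closed residue field (e.g.\ $K=\Q_5\pow{\Z}$ with the $t$-adic valuation $v$, where $vK=\Z$ is regular and non-divisible but the $5$-adic valuation on $Kv=\Q_5$ forces $v_K^p$ below $\OO_v$). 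In that situation one must recover $\OO_v$ as a specific coarsening of $\OO_{v_K^p}$ and prove that the corresponding convex subgroup of $v_K^pK$ is parameter-free definable; that is precisely the work the actual proof has to do, and ``composing with a further definable coarsening coming from the $\Z$-group structure'' names the destination without supplying the argument. As it stands the proposal is a plausible road map, not a proof.
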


	\thref{ex:regnew}~(\ref{ex:regnew:1}) shows that there are discretely ordered abelian groups which are not regular. \thref{ex:regnew}~(\ref{ex:regnew:2}) exhibits a densely ordered abelian group $G$ which is not regular but has limit points in $\div{G} \setminus G$. This shows that there are ordered fields such that the cases (\ref{lem:defvalnew:1}) and (\ref{lem:defvalnew:2}) of \thref{lem:defvalnew} are not already covered by \thref{fact:hong}. 
	
	\subsection{Almost Real Closed Fields}\label{sec:arc}
	
	Algebraic and model theoretic properties of the class of almost real closed fields have been studied in \cite{delon}. Moreover, \cite[Theorem~4.4]{delon} gives a complete characterisation of $\Lr$-definable henselian valuations in almost real closed fields. In the following, we will firstly prove some useful properties of almost real closed fields in the language $\Lor$ and secondly compare $\Lr$- and $\Lor$-definability of henselian valuations in almost real closed fields.
	
	\begin{definition} \thlabel{def:arc}
		Let $(K,<)$ be an ordered field, $G$ an ordered abelian group and $v$ a henselian valuation on $K$. We call $K$ an \textbf{almost real closed field (with respect to $v$ and $G$)} if $Kv$ is real closed and $vK = G$. 
	\end{definition}

	Depending on the context, we may simply say that $(K,<)$ is an almost real closed field without specifying the henselian valuation $v$ or the ordered abelian group $G = vK$. 

	\begin{remark}
		In \cite{delon}, almost real closed fields are defined as pure fields which admit a henselian valuation with real closed residue field. However, any such field admits an ordering, which is due to the Baer--Krull Representation Theorem (cf. \cite[p.~37~f.]{engler}). We consider almost real closed fields as ordered fields with a fixed order.
	\end{remark}
	
	Due to \thref{fact:hensconv} and the following fact, we do not need to make a distinction between convex and henselian valuations in almost real closed fields.
	
	\begin{fact}\emph{(See \cite[Proposition~2.9]{delon}.)}\thlabel{fact:convhens}
		Let $(K,<)$ be an almost real closed field. Then any convex valuation on $(K,<)$ is henselian.
	\end{fact}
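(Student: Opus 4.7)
The plan is to combine the fact that the convex valuations on an ordered field are totally ordered by inclusion with two standard closure properties of henselianity, namely preservation under coarsening and under composition; the non-trivial case then reduces to the principle that every convex valuation on a real closed field is henselian. Let $v$ be the henselian valuation witnessing that $(K,<)$ is almost real closed, so that $Kv$ is real closed, and let $w$ be an arbitrary convex valuation on $(K,<)$.

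First I would invoke the classical observation that the convex subrings of an ordered field form a chain under inclusion, so $\OO_v$ and $\OO_w$ are comparable. If $\OO_v\subseteq\OO_w$, then $w$ is a coarsening of the henselian valuation $v$, hence henselian. Otherwise $\OO_w\subsetneq\OO_v$, and one checks that $\MM_v\subseteq\MM_w$; thus $\OO_w/\MM_v$ is a non-trivial valuation ring of $Kv$, giving rise to a valuation $\bar w$ on $Kv$. Convexity of $w$ on $(K,<)$ transfers to convexity of $\bar w$ on $Kv$ under the order induced via $v$, and by construction $w$ is the composite of $v$ with $\bar w$. Granting henselianity of $\bar w$, the standard fact that compositions of henselian valuations are henselian then yields henselianity of $w$.

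The main obstacle is therefore the auxiliary claim that \emph{every convex valuation on a real closed field is henselian}, applied to $\bar w$ on $Kv$. I would establish this directly via a Hensel-from-IVP argument: given a monic polynomial $f\in\OO_{\bar w}[X]$ and an approximate root $\alpha\in\OO_{\bar w}$ with $\bar w(f(\alpha))>0=\bar w(f'(\alpha))$, a Taylor expansion shows that $f$ is strictly monotone on a sufficiently short interval around $\alpha$ inside $\alpha+\MM_{\bar w}$, and that $f$ changes sign on such an interval. The intermediate value property available in the real closed field $Kv$ then produces the required root of $f$ congruent to $\alpha$ modulo $\MM_{\bar w}$. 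With this lemma in hand the two-case dichotomy closes, and the remaining manipulations — comparability of convex valuation rings, the ideal inclusion $\MM_v\subseteq\MM_w$, and recognising $w$ as the composite $v\circ\bar w$ — are routine soft algebra.
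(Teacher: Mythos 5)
The paper offers no proof of this statement; it is imported verbatim from Delon--Farr\'e (Proposition~2.9), and your argument is correct and follows essentially the same standard route as that source: comparability of convex valuation rings, closure of henselianity under coarsening and composition, and the reduction to the fact that convex valuations on a real closed field are henselian via the intermediate value property. The only step you leave implicit is that $\OO_v$ is itself convex (so that it is comparable with $\OO_w$), which is exactly \thref{fact:hensconv} quoted earlier in the paper.
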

	
	We will start by showing that several model theoretic results from \cite{delon} in the language $\Lr$ also apply to almost real closed fields in the language $\Lor$.
	\cite[Proposition~2.8]{delon} implies that the class of almost real closed fields in the language $\Lr$ is closed under elementary equivalence. We can easily deduce that this also holds in the language $\Lor$.
	
	\begin{proposition}\thlabel{prop:arcelem}
		Let $(K,<)$ be an almost real closed field and let $(L,\linebreak <) \equiv (K,<)$. Then $(L,<)$ is an almost real closed field.
	\end{proposition}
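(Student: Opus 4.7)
The plan is to reduce to the pure field case and then invoke \cite[Proposition~2.8]{delon}. The observation that makes this work is that $\Lor$ is a language extension of $\Lr$, so $(L,<) \equiv (K,<)$ in $\Lor$ immediately gives $L \equiv K$ in $\Lr$ by passing to reducts. Hence any statement about elementary classes of pure fields transfers verbatim.

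First I would unpack the hypothesis: by \thref{def:arc}, $(K,<)$ almost real closed means there exists a henselian valuation $v$ on $K$ with $Kv$ real closed. Forgetting the order, $K$ is an almost real closed field in the sense of \cite{delon}. Then by \cite[Proposition~2.8]{delon}, which asserts that the property of admitting some henselian valuation with real closed residue field is preserved under $\Lr$-elementary equivalence, the field $L$ admits a henselian valuation $w$ with real closed residue field $Lw$.

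It remains to verify that $w$ fits into the ordered setting of \thref{def:arc}, i.e.\ that we may view $(L,<)$ together with $w$ as almost real closed. For this I would invoke \thref{fact:hensconv}: since $(L,<)$ is an ordered field and $w$ is henselian, $w$ is automatically convex on $(L,<)$. Thus $(L,<)$ is almost real closed with respect to $w$ and $wL$.

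There is no real obstacle here; the only thing to keep an eye on is the minor bookkeeping between the two conventions for the term \emph{almost real closed field} (the pure-field definition used in \cite{delon} versus the ordered-field definition \thref{def:arc} used in this paper). The bridge between them is precisely \thref{fact:hensconv}, together with the trivial fact that the $\Lr$-reduct of an $\Lor$-elementary equivalence is again elementary.
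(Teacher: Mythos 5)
Your proposal is correct and follows exactly the paper's own argument: pass to the $\Lr$-reduct, apply \cite[Proposition~2.8]{delon} to obtain a henselian valuation on $L$ with real closed residue field, and conclude via \thref{def:arc}. The extra appeal to \thref{fact:hensconv} is harmless but not needed, since \thref{def:arc} only requires a henselian valuation with real closed residue field and does not mention convexity.
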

	
	\begin{proof}
		Since $L \equiv K$, we obtain by \cite[Proposition~2.8]{delon} that $L$ admits a henselian valuation $v$ such that $Lv$ is real closed. Hence, $(L,<)$ is almost real closed.
	\end{proof}
	
	\begin{corollary}\thlabel{cor:arcarc}
		Let $(K,<)$ be an ordered field. Then $(K,<)$ is almost real closed if and only if $(K,<) \equiv (\R\pow{G},<)$ for some ordered abelian group $G$. 
	\end{corollary}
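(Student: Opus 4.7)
The plan is to prove both directions using the Ax--Kochen--Ershov principle for the forward direction and the already-established \thref{prop:arcelem} for the backward direction.

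For the forward direction, suppose $(K,<)$ is almost real closed with respect to a henselian valuation $v$ and ordered abelian group $G = vK$, so that $Kv$ is real closed. I would consider the ordered valued field $(\R\pow{G},<,\vmin)$. By the construction of Hahn fields as recalled in \Autoref{sec:prelim}, $\vmin$ is henselian and convex, its value group is $G$, and its residue field is $\R$, which is real closed. Thus both $(K,<,v)$ and $(\R\pow{G},<,\vmin)$ are henselian ordered valued fields whose residue fields are real closed (hence elementarily equivalent as ordered fields, by completeness of $\Trcf$) and whose value groups are both equal to $G$. The Ax--Kochen--Ershov principle then yields $(K,<,v) \equiv (\R\pow{G},<,\vmin)$, and taking the $\Lor$-reduct gives $(K,<) \equiv (\R\pow{G},<)$.

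For the backward direction, assume $(K,<) \equiv (\R\pow{G},<)$ for some ordered abelian group $G$. Since $(\R\pow{G},<)$ carries the henselian valuation $\vmin$ with real closed residue field $\R$, it is by definition almost real closed. Applying \thref{prop:arcelem} transfers this property to $(K,<)$.

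The only subtlety is to make sure that in the forward direction we genuinely have $Kv \equiv \R$; this follows from Tarski's theorem that the theory of real closed ordered fields is complete. No step should present a real obstacle, as both directions are essentially immediate consequences of results already in hand.
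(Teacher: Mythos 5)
Your proposal is correct and follows exactly the paper's own argument: the forward direction via the Ax--Kochen--Ershov principle (comparing $(K,<,v)$ with $(\R\pow{G},<,\vmin)$, using completeness of the theory of real closed ordered fields for the residue fields) and the backward direction via \thref{prop:arcelem}. The paper states this in two sentences; you have merely filled in the routine details.
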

	
	\begin{proof}
		The forward direction follows from the Ax--Kochen--Ershov Principle. The backward direction is a consequence of \thref{prop:arcelem}.
	\end{proof}

	\begin{corollary}\thlabel{prop:arcarc}
		Let $(k,<)$ be an almost real closed field and $G$ an ordered abelian group. Then $(k\pow{G},<)$ is almost real closed.
	\end{corollary}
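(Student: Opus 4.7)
The plan is to directly exhibit, on $k\pow{G}$, a henselian valuation whose residue field is real closed, thereby verifying \thref{def:arc}. Since $(k,<)$ is almost real closed, we fix at the outset a henselian valuation $u$ on $k$ with $ku$ real closed. On the Hahn field $k\pow{G}$ we already have the canonical valuation $\vmin$, which is henselian and has residue field $k$ itself.

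The key idea is to compose these two valuations. Let $\pi\colon \OO_{\vmin}\to k$ denote the residue map on $k\pow{G}$. Define $w$ to be the valuation on $k\pow{G}$ with valuation ring $\pi^{-1}(\OO_u)$, that is, the standard composition $u\circ\vmin$. By construction, the residue field of $w$ is canonically isomorphic to $\OO_u/\MM_u = ku$, which is real closed; its value group is the lexicographic sum of $uk$ and $G$, but this is not needed for the conclusion.

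It remains to check that $w$ is henselian. This is the standard fact that a composition of henselian valuations is henselian: whenever $v_1$ is henselian on a field $K$ and $v_2$ is henselian on $Kv_1$, then $v_2\circ v_1$ is henselian on $K$ (see \cite{engler}). Applied to $v_1=\vmin$ on $k\pow{G}$ and $v_2=u$ on $k$, this yields henselianity of $w$. Thus $w$ is a henselian valuation on $(k\pow{G},<)$ with real closed residue field, and hence $(k\pow{G},<)$ is almost real closed.

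I do not foresee any genuine obstacle: the entire argument rests on the well-known stability of henselianity under composition together with the routine identification of the residue field of a composed valuation. One could alternatively try to invoke \thref{cor:arcarc} to replace $k$ by $\R\pow{H}$ for some $H$ and then use an isomorphism $\R\pow{H}\pow{G}\cong\R\pow{G\oplus H}$, but the composition-of-valuations route is cleaner and avoids checking that elementary equivalence is preserved under passing to Hahn fields in the language $\Lor$.
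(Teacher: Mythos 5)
Your proof is correct, but it takes a genuinely different route from the paper's. The paper argues model-theoretically: by the Ax--Kochen--Ershov principle, $(k\pow{G},<,\vmin)\equiv(\R\pow{vk}\pow{G},<,\vmin)\cong(\R\pow{G\oplus vk},<,\vmin)$, the latter being visibly almost real closed, and then invokes \thref{cor:arcarc} (ultimately Delon--Farr\'e's result that the class of almost real closed fields is closed under elementary equivalence) to transfer the property back to $k\pow{G}$ --- exactly the alternative you mention and set aside at the end. Your composition-of-valuations argument is purely valuation-theoretic: it needs neither AKE nor the elementary-equivalence closure of the class, and it has the added benefit of exhibiting an explicit henselian valuation $w=u\circ\vmin$ on $k\pow{G}$ witnessing the definition, with residue field $ku$ real closed; henselianity of the composite is indeed standard (Engler--Prestel, Corollary 4.1.4). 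The one cosmetic slip is the description of the value group of $w$, which is the lexicographic sum $G\oplus uk$ with $uk$ as the convex subgroup (not ``$uk$ and $G$'' in that order), but as you note this plays no role in the conclusion. The paper's route is preferred there only because \thref{cor:arcarc} and \thref{prop:arcelem} are already in place and the statement is cast as a corollary of them; your argument is self-contained and arguably more elementary.
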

	
	\begin{proof}
		Let $v$ be a henselian valuation on $k$ such that $kv$ is real closed. By the Ax--Kochen--Ershov principle, we have $(k\pow{G},<,\vmin) \equiv (\R\pow{vk}\pow{G},<,\linebreak \vmin)$.
		Now $(\R\pow{vk}\pow{G},<) \cong (\R\pow{G \oplus vk},<)$, which is an almost real closed field. By \thref{cor:arcarc}, $(k\pow{G},<)$ is almost real closed.
	\end{proof}
	
	Let $(K,<)$ be an almost real closed field. We denote by $V(K)$ the set of all henselian valuations on $K$ with real closed residue field, by $v_1$ the maximum of $V(K)$, i.e. the finest valuation in $V(K)$, and by $v_0$\label{def:v0} be the minimum of $V(K)$, i.e. the coarsest valuation in $V(K)$. These exist by \cite[Proposition~2.1]{delon}.
	By the remarks in \cite[p.~1147~f.]{delon}, $v_0$ is the only possible $\Lr$-definable henselian valuation in $V(K)$. Also in the language $\Lor$, there is at most one definable valuation in $V(K)$.

	\begin{proposition}\thlabel{prop:rcuniquedef}
		Let $(K,<)$ be an almost real closed field and $v \in V(K)$. Suppose that $v$ is $\Lor$-definable in $K$. Then $v$ is the only $\Lor$-definable valuation in $V(K)$. 
	\end{proposition}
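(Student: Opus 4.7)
The plan is to assume for contradiction that $w\in V(K)$ is a second $\Lor$-definable valuation with $w\neq v$, pass to the induced valuation on the real closed residue field $Kv$, and reach a contradiction with the o-minimality of real closed fields in $\Lor$.

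First I would use that the set of convex valuations on an ordered field is linearly ordered by coarseness, so $v$ and $w$ are comparable; by swapping their roles if necessary I may assume $v<w$. Then $\OO_w\subsetneq\OO_v$, and standard valuation theory gives $\MM_v\subsetneq\MM_w\subseteq\OO_w$. Hence $w$ induces a non-trivial convex valuation $\ol w$ on $Kv$ with valuation ring $\OO_{\ol w}=\OO_w/\MM_v$, which is a proper subring of the real closed field $Kv$. Since $v,w\in V(K)$, the valuation $\ol w$ still has real closed residue field, so in particular $\OO_{\ol w}$ is a non-trivial proper convex subring.

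Next I would transfer the definability downstairs. Since $\OO_v$ is $\Lor$-definable in $K$, the ordered residue field $(Kv,<)$ is $\Lor$-interpretable in $(K,<)$, and since $\OO_w$ is also $\Lor$-definable and $\MM_v\subseteq\OO_w$, the quotient $\OO_{\ol w}$ is exactly the image under the residue map $\OO_v\to Kv$ of the $\Lor$-definable set $\OO_w\cap\OO_v=\OO_w$, hence an $\Lor$-definable subset of $Kv$. This clean transfer of definability through the interpretation is the main technical point of the proof.

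Finally I would contradict o-minimality. Real closed fields are o-minimal in $\Lor$, so every $\Lor$-definable subset of $Kv$ is a finite union of points and intervals with endpoints in $Kv\cup\{\pm\infty\}$. It then suffices to check that no such set can be a proper convex subring of $Kv$: for any $x\in Kv\setminus\OO_{\ol w}$ convexity together with the ring property gives $\OO_{\ol w}\subseteq(-|x|,|x|)$, so $\OO_{\ol w}$ is bounded and its supremum $M\in Kv$ exists and satisfies $M\geq 1>0$; but $2a\in\OO_{\ol w}$ for every $a\in\OO_{\ol w}$ forces $a\leq M/2$, hence $M\leq M/2$, the desired contradiction.
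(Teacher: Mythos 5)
Your overall strategy --- reduce to the case $\mathcal{O}_w\subsetneq\mathcal{O}_v$ (legitimate, since convex valuation rings of an ordered field are linearly ordered by inclusion and the hypotheses on $v$ and $w$ are symmetric), push $w$ down to a non-trivial convex valuation $\overline{w}$ on the real closed field $Kv$, and contradict o-minimality of $(Kv,<)$ --- is coherent, and your final step is correct: no proper non-trivial convex subring of a real closed field is $\Lor$-definable (the paper makes the same observation in \thref{rmk:mainrmk}). However, the step you yourself single out as ``the main technical point'' is a genuine gap, not a clean formality. From the fact that $(Kv,<)$ is interpretable in $(K,<)$ and that $\mathcal{O}_w$ is $\Lor$-definable in $K$, you may only conclude that $\mathcal{O}_{\overline{w}}$ is definable \emph{in the structure induced on the quotient sort $Kv$ by the interpretation}, i.e.\ that its preimage under the residue map is definable in $K$. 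To conclude that it is definable \emph{in the pure ordered field $(Kv,<)$ with parameters from $Kv$} --- which is what the o-minimality argument requires --- you need the residue field to be stably embedded and pure in $(K,<,v)$: every $K$-definable subset of $(Kv)^n$ must be definable by an $\Lor$-formula over $Kv$. This is a non-trivial relative quantifier elimination statement (true for henselian valued fields of equicharacteristic $0$, and in suitable form for convex valuations on ordered fields), and the defining formula for $\mathcal{O}_w$ quantifies over all of $K$ and uses parameters that need not lie in $\mathcal{O}_v$, so there is no syntactic way to ``restrict'' it to the residue field. Without citing or proving such a result, the induced structure on $Kv$ could a priori be richer than the o-minimal ordered field structure, and the contradiction evaporates.

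The paper's proof avoids this issue entirely: by the Ax--Kochen--Ershov principle, $(K,<,v)\equiv(\R\pow{vK},<,\vmin)$, and since the residue field $\R$ is archimedean, $\vmin$ is outright the \emph{finest} convex valuation on $\R\pow{vK}$ --- no strictly finer convex valuation exists, definable or not. The existence of an $\Lor$-formula defining a convex valuation strictly finer than $v$ is a first-order property (given the defining formulas of $v$ and $w$), so it transfers to $\R\pow{vK}$ and yields an immediate contradiction; one then concludes that any $\Lor$-definable member of $V(K)$ is the finest $\Lor$-definable convex valuation on $K$, hence unique. If you want to keep your residue-field approach, you must supply the stable embeddedness input; otherwise the elementary-equivalence route is both shorter and self-contained given the tools already in the paper.
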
	
	
	\begin{proof}
		By the Ax--Kochen--Ershov Principle, we have $$(K,<,v) \equiv (\R\pow{vK},<,\vmin).$$ Since $v$ is $\Lor$-definable in $K$, there exists an $\Lor$-formula $\varphi(x,\ul{y})$ such that $$K \models \exists \ul{y} \forall x\  (\varphi(x,\ul{y}) \leftrightarrow v(x) \geq 0).$$ By elementary equivalence, there exists $\ul{b} \in \R\pow{vK}$ such that $$\R\pow{vK} \models \forall x\  (\varphi(x,\ul{b}) \leftrightarrow \vmin(x) \geq 0).$$ Hence, $\vmin$ is $\Lor$-definable in $\R\pow{vK}$. 
		
		Let $\psi(x,\ul{y})$ be an $\Lor$-formula and $\ul{c} \in K$ such that $\psi(x,\ul{c})$ defines a convex valuation $w$ in $K$. Assume, for a contradiction, that $w$ is strictly finer than $v$, i.e. $\OO_w \subsetneq \OO_v$. This implies
		$$(K,<,v) \models \forall x \ (\psi(x,\ul{c}) \to v(x) \geq 0) \wedge \exists z\ ( \neg \psi(z,\ul{c}) \wedge v(z) \geq 0 ).$$
		By elementary equivalence, there is some $\ul{c}' \in \R\pow{vK}$ such that
		$\psi(x,\ul{c}')$ defines a convex valuation $w'$ in $\R\pow{vK}$ with $\OO_{w'} \subsetneq \OO_{\vmin}$. This contradicts that fact that $\vmin$ is the finest convex valuation on $\R\pow{vK}$. Hence, $v$ is the finest $\Lor$-definable convex valuation in $K$.

		Let $v' \in V(K)$ be $\Lor$-definable. Arguing as above, $v'$ is the finest $\Lor$-definable convex valuation on $K$. This gives us $v' = v$, as required.
	\end{proof}

	\noindent {\bf Comparison of $\Lr$- and $\Lor$-definability of henselian valuations in almost real closed fields.}
	Let $p$ be a prime number. A valuation $v$ on $K$ is called {$p$-Kummer henselian} if Hensel's Lemma holds for polynomials of the form $x^p-a$ for $a \in \OO_v$. A field $L$ is called {$p$-euclidean} if $L = \pm L^p$. Let $V_p(K)$ be the set of all $p$-Kummer henselian valuations of $K$ with $p$-euclidean residue field. Denote by $v_p$ the minimum of $V_p(K)$ (cf. \cite[p.~1126]{delon}).
	
	\begin{fact}\thlabel{fact:thm44}\emph{(See \cite[Theorem~4.4]{delon}.)}
		Let $(K,<)$ be an almost real closed field and $v$ a henselian valuation on $K$. Then $v$ is $\Lr$-definable in $K$ if and only if $vK$ is $\Log$-definable in $v_1K$ and $v \leq v_p$ for some prime $p$. Moreover, $v_0$ is $\Lr$-definable if and only if there is a prime $p$ such that $v_0K$ has no non-trivial $p$-divisible convex subgroups. 
	\end{fact}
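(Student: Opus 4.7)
The plan is to reduce, via Ax--Kochen--Ershov together with \thref{cor:arcarc}, to the case $(K,<) = (\R\pow{G},<)$ with $G = v_1 K$ and $v_1 = \vmin$. Under this reduction the set $V(K)$ is in natural bijection with the convex subgroups of $G$: to $v \in V(K)$ we associate the convex subgroup $H_v \subseteq G$ satisfying $vK \cong G/H_v$, namely the image under $v_1$ of the units of $\OO_v$. A key structural remark is that, because the residue field $\R$ is o-minimal and the orderings of $K$ are parametrised by Baer--Krull characters on $G/2G$, every $\Lr$-definable valuation ring of $K$ coarser than $v_1$ is captured by an $\Log$-definable convex subgroup of $G$, and conversely.

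For the backward direction, suppose $vK$ is $\Log$-definable in $v_1K$ and $v \le v_p$ for some prime $p$. I would first invoke the canonical $p$-henselian valuation theory (Koenigsmann, Jahnke--Koenigsmann) to conclude that $v_p$ is parameter-free $\Lr$-definable in $K$, using that $v_p$ is the coarsest $p$-Kummer henselian valuation with $p$-euclidean residue. Since $v \le v_p$, the valuation $v$ corresponds to a convex subgroup $\overline H \subseteq v_pK$, and the hypothesis lifts to $\overline H$ being $\Log$-definable. Interpreting $v_p K$ inside $K$ through $v_p$, one then defines $\OO_v$ as the preimage under $v_p$ of the non-negative coset representatives modulo $\overline H$.

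For the forward direction, assume $\OO_v$ is $\Lr$-definable. The $\Log$-definability of $vK = G/H_v$ in $v_1K$ follows by pushing the positive part of $\OO_v$ forward under $v_1$ and reading off the convex subgroup $H_v$ from the resulting $\Log$-definable set. The upper bound $v \le v_p$ for some prime $p$ is the crux: if $v$ coarsens no $v_p$, then for every prime $p$ the residue field $Kv$ fails to be $p$-euclidean, and via the Baer--Krull description of orderings this yields $\Lr$-automorphisms of $K$ which fail to preserve $\OO_v$, contradicting $\Lr$-definability. For the moreover clause, applying the first part with $v = v_0$ makes the $\Log$-definability condition automatic (the associated convex subgroup is trivial), so $\Lr$-definability of $v_0$ is equivalent to $v_0 \le v_p$ for some prime $p$; under the coarsening-to-convex-subgroup correspondence, $v_0 \le v_p$ translates precisely to $v_0K$ having no non-trivial $p$-divisible convex subgroup. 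The main obstacle is securing the bound $v \le v_p$: this relies on genuinely $\Lr$-level canonicity of $p$-henselian valuations rather than the ordered-field constructions of \Autoref{subsec:def}.
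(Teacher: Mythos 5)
This statement is imported into the paper as a \emph{Fact}, quoted from \cite[Theorem~4.4]{delon}; the paper contains no proof of it, so there is no internal argument to compare yours against. Judged against the proof in the cited source, your outline follows the same broad strategy — reduce to $\R\pow{G}$ by Ax--Kochen--Ershov, identify $V(K)$ with convex subgroups of $v_1K$, obtain definability of $v$ by composing a definable $v_p$ with a definable convex subgroup of the value group, and rule out definability when $v \not\leq v_p$ for every prime $p$ — but three steps are not actually secured.

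First, the definability of $v_p$ for odd $p$ cannot be quoted from the standard canonical $p$-henselian valuation theory: that machinery requires a primitive $p$-th root of unity in $K$, which no formally real field contains for odd $p$. Delon and Farré introduce the notions of $p$-Kummer henselian valuation and $p$-euclidean residue field precisely to repair this, and establishing $\Lr$-definability of $v_p$ in that modified setting is a genuine piece of work, not a citation. Second, in the forward direction you propose to find $\Lr$-automorphisms of $K$ that move $\OO_v$; an arbitrary almost real closed field can be rigid, so no such automorphism need exist. The argument has to pass to a sufficiently saturated elementary extension, where the failure of $v \leq v_p$ for all $p$ produces distinct henselian coarsenings realising the same type over any given parameter set. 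Third, your parenthetical in the moreover clause is incorrect: the convex subgroup of $v_1K$ associated with $v_0$ is the maximal \emph{divisible} convex subgroup, which is by no means trivial in general, and its $\Log$-definability is not automatic. It does follow once $v_0 \leq v_p$ holds, because that subgroup then coincides with the maximal $p$-divisible convex subgroup, which is $\emptyset$-definable in $\Log$; your translation of $v_0 \leq v_p$ into the absence of non-trivial $p$-divisible convex subgroups of $v_0K$ is correct.
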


	Recall that $v_0$ is the only possible $\Lr$-definable valuation in $V(K)$. If the ordering on an almost real closed field $(K,<)$ is $\Lvf$-definable for $v = v_0 \in V(K)$, then we obtain a complete characterisation of $\Lor$-definable convex valuations in $K$.
	
	\begin{lemma}\thlabel{lem:orderingdef2}
		Let $(K,<)$ be an ordered field and let $v$ be a henselian valuation on $K$ such that $Kv$ is $2$-euclidean (i.e. root-closed for positive elements) and $vK$ is $2$-divisible. Then the ordering $<$ is parameter-free $\Lvf$-definable in $K$.
		In particular, if $v$ is $\Lr$-definable in $K$, then any $\Lor$-definable subset of $K$ is already $\Lr$-definable.
	\end{lemma}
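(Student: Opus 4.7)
My plan is to show the stronger statement that every positive element of $K$ is a square. Once this is established, the ordering is parameter-free definable in pure $\Lr$ (and a fortiori in $\Lvf$) by the formula $x > 0 \leftrightarrow x \neq 0 \wedge \exists y\,(x = y^2)$, since squares are nonnegative in any ordered field. The ``in particular'' clause then follows because an $\Lor$-formula can be rewritten as an $\Lvf$-formula by replacing every occurrence of $<$ with its $\Lvf$-definition, and then as an $\Lr$-formula by replacing the predicate $\OO_v$ by its assumed $\Lr$-definition.

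To carry out the main reduction, first note that $v$ is convex in $(K,<)$ by \thref{fact:hensconv}, so $Kv$ is an ordered field and hence of characteristic $0$; in particular $v(2)=0$. Given $x \in K$ with $x>0$, since $vK$ is $2$-divisible we may pick $g \in vK$ with $2g = v(x)$, and then some $z \in K^\times$ with $v(z) = g$. The element $u := x/z^2$ lies in $\OO_v^\times$, and since $v$ is convex and $x,z^2 >0$, the residue $\ol{u}$ is a positive nonzero element of $Kv$.

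Now invoke the assumption that $Kv$ is root-closed for positive elements: there is $\ol{y} \in Kv^\times$ with $\ol{y}^2 = \ol{u}$. Lift $\ol{y}$ to some $y \in \OO_v^\times$ and apply Hensel's lemma to the polynomial $f(T) = T^2 - u$. One has $v(f(y)) = v(y^2 - u) > 0$ (since residues agree) and $v(f'(y)) = v(2y) = 0$, so henselianity yields $t \in \OO_v^\times$ with $t^2 = u$. Then $x = (tz)^2$, as desired. This is the only step that uses the full hypothesis; it is not really an obstacle, just a standard Hensel lift, and the minor care needed is ensuring $v(2)=0$ so that $v(f'(y))=0$, which is guaranteed by convexity of $v$ as noted above.

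Having shown $K = K^{\geq 0} \cup (-K^{\geq 0})$ with $K^{\geq 0} = \{y^2 \mid y \in K\}$, the parameter-free $\Lr$-formula $x \neq 0 \wedge \exists y\,(x = y^2)$ defines $K^{>0}$, hence defines the ordering as $x < y \leftrightarrow (y - x) \neq 0 \wedge \exists t\,(y - x = t^2)$. This completes both assertions of the lemma.
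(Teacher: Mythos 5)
Your proof is correct, but it takes a genuinely different route from the paper's and in fact proves something stronger. The paper never shows that positive elements are actual squares: it defines the positive cone by the $\Lvf$-formula $x = 0 \vee \exists y\ v(x-y^2) > v(x)$ (``$x$ agrees with a square up to an error of higher value''), verifies this by an explicit leading-monomial computation in the Hahn field $Kv\pow{vK}$ with $\vmin$ (taking $y = \sqrt{a_g}\,t^{g/2}$), and then transfers the statement to $(K,<,v)$ via the Ax--Kochen--Ershov principle; this avoids Hensel's lemma but produces a formula that genuinely mentions $\OO_v$, which is why the ``in particular'' clause needs $v$ to be $\Lr$-definable. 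You instead apply Hensel's lemma directly in $K$ (correctly noting $v(2)=0$ because $Kv$ is an ordered field of characteristic $0$ by \thref{fact:hensconv}) to conclude $K^{>0} = (K^\times)^2$, i.e.\ that $K$ is itself $2$-euclidean. This yields a parameter-free $\Lr$-definition of the ordering, which is formally stronger than the stated $\Lvf$-definability and makes the hypothesis ``$v$ is $\Lr$-definable'' in the second assertion superfluous: any $\Lor$-formula translates directly into $\Lr$ without ever rewriting $\OO_v$. Both arguments are sound; the paper's trades Hensel's lemma for an AKE transfer, while yours is more self-contained and extracts more from the same hypotheses.
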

	
	\begin{proof}
		Let $k=Kv$ and $G=vK$. Consider the $\Lvf$-formula $\varphi(x)$ given by $$x = 0 \vee \exists y\ v(x-y^2) > v(x).$$
		We will show that for any $a \in k\pow{G}$, the formula $\varphi(a)$ holds if and only if $a \geq 0$. 
		Let $a = a_gt^g + s \in k\pow{G}^\times$, where $a_g \in k^\times$, $s \in k\pow{G^{>g}}$ and $g = \vmin(a)$.
		Suppose that $\varphi(a)$ holds. Then there exists $y \in K^\times$ such that $\vmin(x-y^2) > g$. Hence, $a_g = y_g^2 > 0$, where $y_g$ is the coefficient of the monomial $t^g$ in $y$. Thus, $a > 0$.
		Now suppose that $a > 0$. Let $y = \sqrt{a_g}t^{g/2}$. Then $\vmin(a-y^2) = \vmin(s) > g = \vmin(a)$.
		
		By the Ax--Kochen--Ershov Principle, $(K,<,v) \equiv (Kv\pow{vK},<, \vmin)$. Hence, we obtain $K \models \forall x \ (x\geq 0 \leftrightarrow \varphi(x))$.
	\end{proof}
	
	\begin{proposition}\thlabel{prop:charlrdef}
		Let $(K,<)$ be an almost real closed field. Suppose that $v_0$ is $\Lr$-definable and that $v_0K$ is $2$-divisible. Let $w$ be a valuation on $K$. If $w$ is $\Lor$-definable, then it is $\Lr$-definable.
	\end{proposition}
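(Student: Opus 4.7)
The plan is to apply \thref{lem:orderingdef2} directly to the valuation $v_0$. The hypotheses we need are that the residue field $Kv_0$ is $2$-euclidean and that the value group $v_0K$ is $2$-divisible. The second is given by assumption, and the first is automatic: since $v_0 \in V(K)$, the residue field $Kv_0$ is real closed, and any real closed field is of course $2$-euclidean (in fact it is root-closed for all positive elements).

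With these hypotheses verified, \thref{lem:orderingdef2} tells us that the ordering $<$ on $K$ is parameter-free $\Lvf$-definable from $v_0$. Combining this with the assumption that $v_0$ itself is $\Lr$-definable, the ``in particular'' part of \thref{lem:orderingdef2} yields that every $\Lor$-definable subset of $K$ is already $\Lr$-definable. Applying this to the $\Lor$-definable valuation ring $\OO_w$, we conclude that $\OO_w$ is $\Lr$-definable, and hence so is $w$.

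There is essentially no obstacle here beyond checking that the two conditions of \thref{lem:orderingdef2} hold for $v_0$; the proposition is really a corollary packaging together that lemma with the definition of an almost real closed field and the hypothesis on $v_0K$. The only subtlety worth noting is the logical structure of the ``in particular'' clause in \thref{lem:orderingdef2}: the $\Lvf$-definability of $<$ from $v_0$ means that every $\Lor$-formula can be rewritten as an $\Lvf$-formula (with $v_0$ in place of $\OO_v$), and then the $\Lr$-definability of $\OO_{v_0}$ lets one substitute this $\Lr$-defining formula in place of each occurrence of the predicate $\OO_v$, producing an $\Lr$-formula defining the same set.
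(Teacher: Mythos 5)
Your proof is correct and follows exactly the paper's argument: apply \thref{lem:orderingdef2} to $v_0$, noting that the residue field $Kv_0$ is real closed (hence $2$-euclidean) and the value group $v_0K$ is $2$-divisible by hypothesis, so the ``in particular'' clause together with the $\Lr$-definability of $v_0$ gives the conclusion. Your closing remark on how the ``in particular'' clause works (rewriting $\Lor$-formulas as $\Lvf$-formulas and then substituting the $\Lr$-definition of $\OO_{v_0}$) is a correct and slightly more explicit account than the paper gives.
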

	
	\begin{proof}
		Since $v_0K$ is real closed, it is $2$-euclidean. By \thref{lem:orderingdef2}, any $\Lor$-definable valuation on $K$ is already $\Lr$-definable.
	\end{proof}
	
	\begin{remark}
 	We obtain the following characterisation of $\Lor$-definable convex valuations in certain almost real closed fields:
	Let $(K,<)$ be an almost real closed field. Suppose that the value group $v_0K$ is $2$-divisible and, for some prime $p$, it has no non-trivial $p$-divisible convex subgroup. Let $v$ be a convex valuation on $K$. By \thref{fact:convhens}, $v$ is henselian. Thus, by \thref{prop:charlrdef} and \thref{fact:thm44}, $v$ is $\Lor$-definable in $K$ if and only if $vK$ is $\Log$-definable in $v_1K$ and $v \leq v_p$ for some prime $p$.
	\end{remark}

	\thref{prop:charlrdef} shows in particular that for an almost real closed field, if $v_0$ is $\L_r$-definable, then so is any $\Lor$-definable henselian valuation. The question becomes whether there is an almost real closed field which admits a $\Lor$-definable henselian valuation which is not $\Lr$-definable. The final result of this sections shows that any henselian valuation in an almost real closed field satisfying the hypothesis of \thref{thm:defval} is already $\Lr$-definable. Note that any discretely ordered abelian group does not have a non-trivial $n$-divisible convex subgroup for any $n \geq 2$.
	
	\begin{proposition}
		Let $(L,<)$ be an almost real closed field with respect to a henselian valuation $w$ and an ordered abelian group $G$ such that either $G$ is discretely ordered or $G$ is not closed in $\div{G}$. Then $w = v_0$ and $v_0$ is $\Lr$-definable.
	\end{proposition}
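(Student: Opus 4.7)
The plan is to prove the statement in two stages: first establish that $w = v_0$, and then deduce $\Lr$-definability of $v_0$ from \thref{fact:thm44}.

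For the first stage, I would proceed by contradiction. Since $w \in V(L)$, we already have $v_0 \leq w$; assume that $v_0$ is strictly coarser than $w$. Then the convex subgroup $H$ of $G = wL$ corresponding to this coarsening is non-trivial. Concretely, $H$ is the value group of the induced henselian valuation $\bar{w}$ on the residue field $Lv_0$. Because $v_0 \in V(L)$, the field $Lv_0$ is real closed, and any henselian valuation on a real closed field has divisible value group (positive elements possess $p$-th roots for every prime $p$). Hence $H$ would be a non-trivial convex divisible subgroup of $G$. But both hypotheses forbid this: if $G$ is discretely ordered, the remark immediately preceding the statement says $G$ admits no non-trivial $n$-divisible convex subgroup for any $n \geq 2$; and if $G$ is not closed in $\div G$, then \thref{cor:convdivlimit} rules out the existence of any non-trivial convex divisible subgroup. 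Either hypothesis produces a contradiction, so $w = v_0$.

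For the second stage, I would apply \thref{fact:thm44}: it suffices to find a prime $p$ such that $v_0 L = G$ has no non-trivial $p$-divisible convex subgroup. If $G$ is discretely ordered, the same remark shows that every prime $p$ works. If $G$ is not closed in $\div G$, then the contrapositive of \thref{prop:convdivlimit} directly provides such a prime $p$. Thus $v_0$ is $\Lr$-definable in either case.

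The main subtlety is the ingredient invoked at the start of stage one, namely that every henselian valuation on a real closed field has divisible value group. This is a standard fact (not spelled out in the excerpt) but it is the only piece of input needed beyond the results already established; everything else is a direct assembly of \thref{cor:convdivlimit}, \thref{prop:convdivlimit}, \thref{fact:thm44}, and the observation about discretely ordered groups from the remark preceding the statement.
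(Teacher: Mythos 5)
Your proof is correct and follows essentially the same route as the paper: both arguments reduce the claim to showing that $G$ has no non-trivial divisible convex subgroup (giving $w = v_0$) and, for some prime $p$, no non-trivial $p$-divisible convex subgroup (giving $\Lr$-definability via \thref{fact:thm44}), using the remark on discretely ordered groups together with \thref{cor:convdivlimit} and \thref{prop:convdivlimit}. The only difference is that you re-derive the characterisation of $v_0$ as the unique henselian valuation with real closed residue field whose value group has no non-trivial divisible convex subgroup (via the coarsening argument and the divisibility of value groups of valuations on real closed fields), whereas the paper simply recalls this characterisation from Delon--Farr\'e.
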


	\begin{proof}
		Recall that $v_0$ is the unique henselian valuation on $L$ such that $Lv_0$ is real closed and $v_0L$ has no non-trivial divisible convex subgroup. Moreover, $v_0$ is $\Lr$-definable if and only if for some prime $p$, there is no $p$-divisible non-trivial convex subgroup of $v_0L$. If $wL = G$ is discretely ordered, both conditions are satisfied, and thus $w = v_0$ is $\Lr$-definable. If $G$ is not closed in $\div{G}$, then by \thref{cor:convdivlimit}, $G$ has no non-trivial convex subgroup, whence $w = v_0$, and by \thref{prop:convdivlimit}, there is a prime $p$ such that $G$ has no non-trivial $p$-divisible convex subgroup, whence $v_0$ is $\Lr$-definable.
	\end{proof}


	\section{Density in Real Closure}\label{sec:density}

	In \Autoref{sec:densdivhull} we considered ordered abelian groups which are dense in their divisible hull. Recall that an ordered abelian group is dense in its divisible hull if and only if it is densely ordered and regular. Recall further that the property of density in the divisible hull is preserved under e\-le\-men\-ta\-ry equivalence. In \thref{cor:kdensedef} we started considering the analogue for ordered fields, namely ordered fields which are dense in their real closure. Note that the theory of divisible ordered abelian groups and the theory of real closed fields share several model theoretic properties, such as completeness, o-minimality and quantifier elimination.
	In this section, we will firstly study the class of ordered fields which are dense in their divisible hull model theoretically and secondly give some connections to the literature in which these ordered fields have been studied.
	
	At first, we change to a more general setting of complete o-minimal theories.
	For a structure $\mathcal{M}$ and a subset $A \subseteq M$, denote the definable closure of $A$ in $\mathcal{M}$ by $\dcl(A;\mathcal{M})$.
	
	\begin{theorem}\thlabel{lem:densedc}
		Let $\L$ be a language expanding $\Log$ and let $\MM=(M,+,0,\linebreak <,\ldots)$ and $\NN=(N,+,0,<,\ldots)$ be ordered $\L$-structures such that $(M,+,0,\linebreak <)$ is a non-trivial ordered abelian group and $\MM \equiv \NN$. Suppose that there exists a complete o-minimal $\L$-theory $T\supseteq T_{\mathrm{doag}}$ admitting quantifier elimination such that there are $\MM',\NN' \models T$ with $\MM\subseteq \MM'$, $\NN\subseteq \NN'$, $\dcl(M;\MM')=M'$ and $\dcl(N;\NN')=N'$. Suppose further that $M$ is not dense $M'$. Then $N$ is also not dense in $N'$.
	\end{theorem}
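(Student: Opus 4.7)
The plan is to encode the failure of density of $M$ in $M'$ as an $\L$-sentence (with parameters) true in $\MM$, so that by $\MM\equiv\NN$ the corresponding sentence holds in $\NN$ with witnesses that produce a gap of $N$ in $N'$. First, I would choose $a,b\in M'$ with $a<b$ and $(a,b)\cap M=\emptyset$, and use $\dcl(M;\MM')=M'$ together with quantifier elimination in $T$ to obtain quantifier-free $\L$-formulas $\varphi_a(x,\ul y)$ and $\varphi_b(x,\ul z)$ and tuples $\ul m_a,\ul m_b\in M$ such that $\varphi_a(x,\ul m_a)$ and $\varphi_b(x,\ul m_b)$ define $\{a\}$ and $\{b\}$ in $\MM'$.

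Next I would invoke quantifier elimination in $T$ a second time to extract quantifier-free $\L$-formulas $\psi_a(\ul y)$, $\psi_b(\ul z)$, $\rho(\ul y,\ul z)$, $\xi_a(x,\ul y)$ and $\xi_b(x,\ul z)$ which are $T$-equivalent, respectively, to $\exists^{!}u\,\varphi_a(u,\ul y)$, $\exists^{!}v\,\varphi_b(v,\ul z)$, $\forall u\,\forall v\,(\varphi_a(u,\ul y)\wedge\varphi_b(v,\ul z)\to u<v)$, $\exists u\,(\varphi_a(u,\ul y)\wedge x\leq u)$, and $\exists v\,(\varphi_b(v,\ul z)\wedge x\geq v)$. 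The sentence of interest is then
\[
\theta(\ul y,\ul z)\ :=\ \psi_a(\ul y)\wedge\psi_b(\ul z)\wedge\rho(\ul y,\ul z)\wedge\forall x\,\bigl(\xi_a(x,\ul y)\vee\xi_b(x,\ul z)\bigr).
\]

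The verification that $\MM\models\theta(\ul m_a,\ul m_b)$ is then routine: the quantifier-free conjuncts $\psi_a,\psi_b,\rho$ transfer from $\MM'$ to $\MM$ via $\MM\subseteq\MM'$, and the universal clause holds in $\MM$ because, for each $m\in M$, quantifier-free preservation gives $\MM\models\xi_a(m,\ul m_a)$ iff $m\leq a$ in $\MM'$ and $\MM\models\xi_b(m,\ul m_b)$ iff $m\geq b$ in $\MM'$, so the gap hypothesis $(a,b)\cap M=\emptyset$ forces the disjunction for every $m\in M$. Hence $\MM\models\exists\ul y\,\ul z\,\theta$, and by $\MM\equiv\NN$ one obtains witnesses $\ul n_a,\ul n_b\in N$ satisfying $\theta$ in $\NN$. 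Unwinding in $\NN'$ via quantifier-free preservation across $\NN\subseteq\NN'$: $\psi_a,\psi_b$ produce unique $a',b'\in N'$ defined by $\varphi_a(\cdot,\ul n_a)$ and $\varphi_b(\cdot,\ul n_b)$, $\rho$ forces $a'<b'$, and the universal clause yields $(a',b')\cap N=\emptyset$, so $N$ is not dense in $N'$.

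The main conceptual obstacle that dictates this approach is that $a$ and $b$ will in general lie outside $M$, so the naive statement ``there exist $a,b$ with $(a,b)\cap M=\emptyset$'' cannot be interpreted directly in $\MM$. The workaround is the double use of quantifier elimination in $T$: it packages every condition mentioning $a$, $b$, or hypothetical points of the open interval $(a,b)$ as quantifier-free $\L$-formulas on the parameters $\ul y,\ul z$ and on the free variable $x$. These pass losslessly between $\MM$ and $\MM'$, and between $\NN$ and $\NN'$, which is precisely what allows $\MM\equiv\NN$ to transport the gap from one pair to the other.
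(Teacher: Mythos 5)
Your proof is correct, and it follows the same overall strategy as the paper: use $\dcl(M;\MM')=M'$ together with quantifier elimination in $T$ to turn the gap of $M$ in $M'$ into a first-order condition on parameters from $M$, transport it via $\MM\equiv\NN$, and decode the witnesses in $\NN'$ using quantifier-free preservation across $\NN\subseteq\NN'$. The execution differs in one substantive point. The paper works with two total $0$-definable functions $f,g$ with $f(\ul a)=\alpha$, $g(\ul a)=\beta$, and transfers only the single condition ``$f(\ul x)$ and $g(\ul x)$ cut $M$ the same way''; to guarantee that the witnesses recovered in $N$ still produce \emph{two distinct} points of $N'$, it must first replace $g$ by a function $g'$ disagreeing with $f$ everywhere off $\ul 0$ (using the ordered-group structure via $\max\{|x_1|,\dots,|x_m|\}$) and then split into cases according to whether the parameter tuple is zero, invoking completeness of $T$ in the zero case. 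You sidestep all of this by building the uniqueness conditions $\psi_a,\psi_b$ and the strict-ordering condition $\rho$ directly into the transferred sentence $\theta$, so distinctness of $a'$ and $b'$ comes for free from the witnesses; this removes the case distinction and, incidentally, makes no use of o-minimality, of completeness of $T$, or of the non-triviality of $(M,+,0,<)$. Both arguments are sound; yours is the leaner packaging, while the paper's function-based formulation is what it reuses almost verbatim in the recursive axiomatisation result (\thref{prop:omindenserec}), where having a fixed family of $0$-definable functions to quantify over is convenient.
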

	
	\begin{proof}
		Since $M$ is not dense in $M'$, there exist $\alpha,\beta \in M'$ such that $\alpha < \beta$ and $\alpha$ and $\beta$ produce the same cut on $M$, i.e. \[\setbr{x \in M \mid x < \alpha} = \setbr{x \in M \mid x < \beta}.\] 
		Since $M' = \dcl(M;\MM')$, there are $\L$-formulas $\varphi(\ul{x},y)$ and $\psi(\ul{x},y)$, each defining a $0$-definable function from $(M')^m$ to $M'$ for some $m \in \N$, such that for some $\ul{a} \in M$ we have
		\[\MM' \models \varphi(\ul{a},\alpha) \wedge \psi(\ul{a},\beta).\]
		Let $f$ and $g$ be the functions corresponding to $\varphi$ and $\psi$ respectively. We may assume that for any $\ul{x} \in (M')^m\setminus\{(0,\ldots,0)\}$, we have $f(\ul{x}) \neq g(\ul{x})$, as otherwise we can replace $g$ by the $0$-definable function $g'$ given by $$g'(\ul{x}) = \begin{cases}
		g(\ul{x}), & \text{if } g(\ul{x}) \neq f(\ul{x}),\\
		- g(\ul{x}), & \text{if } g(\ul{x}) = f(\ul{x}) \neq 0,\\
		\max\{|x_1|,\ldots,|x_m|\}, & \text{if } g(\ul{x}) = f(\ul{x}) = 0.
		\end{cases}$$
		
		Now let $\varphi'(\ul{x},z)$ be given by $z < f(\ul{x})$ and let $\psi'(\ul{x},z)$ be given by $ z < g(\ul{x})$.
		By quantifier elimination in $T$, we may take $\varphi''$ and $\psi''$  quantifier-free such that they are equivalent to $\varphi'$ and $\psi'$ respectively.
		Note that for any $b \in M'$ we have $\MM'\models \varphi''(\ul{a},b)$ if and only if $b < \alpha$, and $\MM' \models \psi''(\ul{a},b)$ if and only if $b < \beta$. Hence,
		\[\MM \models  \forall z(\varphi''(\ul{a},z) \leftrightarrow \psi''(\ul{a},z)).\]
		We need to make a case distinction to obtain some element $\ul{a}' \in N$ such that $f(\ul{a}') \neq g(\ul{a}')$.
		
		\textbf{Case (1):} $\ul{a} = 0$. Then \[\MM \models  \forall z(\varphi''(0,z) \leftrightarrow \psi''(0,z)).\] 
		By elementary equivalence, we obtain
		\[\NN \models \forall z(\varphi''(0,z) \leftrightarrow \psi''(0,z)).\]
		Now there are unique $\alpha',\beta' \in N'$ such that
		\[\NN' \models f(0)=\alpha' \wedge g(0)=\beta'.\]
		Since $\MM' \models  f(0) \neq g(0)$, we obtain by completeness of $T$ that $\NN' \models f(0) \neq g(0)$ and thus that $\alpha' \neq \beta'$.
		
		\textbf{Case (2):} $\ul{a} \neq 0$. Then 	\[\MM \models \exists \ul{x} \ (\ul{x} \neq 0 \wedge \forall z(\varphi''(\ul{x},z) \leftrightarrow \psi''(\ul{x},z))).\]
		By elementary equivalence, we obtain
		\[\NN \models \exists \ul{x}\ (\ul{x} \neq 0 \wedge \forall z(\varphi''(\ul{x},z) \leftrightarrow \psi''(\ul{x},z))).\]
		Let $\ul{a}' \in N$ with $\ul{a}' \neq 0$ such that  \[\NN \models \forall z\ (\varphi''(\ul{a}',z) \leftrightarrow \psi''(\ul{a}',z)).\]
		By assumptions on $f$ and $g$, there are unique $\alpha',\beta' \in N'$ such that $\alpha' \neq \beta'$ and
		\[\NN' \models f(\ul{a}')=\alpha' \wedge g(\ul{a}')=\beta'.\]
		This completes the case distinction.
		
		For any $b' \in {N}$ we have $b' < \alpha'$ if and only if $\NN' \models b' < f(\ul{a}')$. By definition of $\varphi'$, this holds if and only if $\NN' \models \varphi'(\ul{a}',b')$. Again, by quantifier elimination, this is equivalent to $\NN' \models \varphi''(\ul{a}',b')$. Since $\varphi''$ is quantifier-free, this holds if and only if $\NN \models \varphi''(\ul{a}',b')$. Similarly, we obtain that for any $b' \in N$ we have $b' < \beta'$ if and only if $\NN \models \psi''(\ul{a}',b')$. 
		
		Hence, we obtain that for any $b' \in N$ we have $b' < \alpha'$ if and only if $b' < \beta'$. This shows that $\alpha'$ and $\beta'$ produce the same cut in $N$, and hence, that $N$ is also not dense in $N'$. 
	\end{proof}
	
	\begin{corollary}\thlabel{thm:densitytransfers}	
		Let $(K,<)$ and $(L,<)$ be ordered fields such that $(K,<) \equiv (L,<)$. Suppose that $K$ is dense in $\rc{K}$. Then $L$ is dense in $\rc{L}$.
	\end{corollary}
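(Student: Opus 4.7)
The plan is to deduce this as a direct application of \thref{lem:densedc} via its contrapositive, taking $T$ to be the theory $T_{\mathrm{rcf}}$ of real closed fields in the language $\Lor$, which expands $\Log$.

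First, I would verify that the hypotheses of \thref{lem:densedc} are satisfied for the theory $T_{\mathrm{rcf}}$. It is a complete o-minimal $\Lor$-theory, admits quantifier elimination, and since every model of $T_{\mathrm{rcf}}$ is in particular a divisible ordered abelian group in its $\{+,0,<\}$-reduct, we have $T_{\mathrm{rcf}} \supseteq T_{\mathrm{doag}}$ in the sense required. Take $\MM = (L,<)$, $\NN = (K,<)$, $\MM' = \rc{L}$, and $\NN' = \rc{K}$. Since $(K,<) \equiv (L,<)$ by assumption, we have $\MM \equiv \NN$. Both $\MM$ and $\NN$ are non-trivial ordered abelian groups under their additive reducts, and both real closures $\rc{K}$ and $\rc{L}$ are models of $T_{\mathrm{rcf}}$ containing $K$ and $L$ respectively.

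Next I would verify the definable-closure condition, namely $\dcl(K;\rc{K}) = \rc{K}$ and $\dcl(L;\rc{L}) = \rc{L}$. This is a standard consequence of o-minimality and quantifier elimination in $T_{\mathrm{rcf}}$: every element of $\rc{K}$ is algebraic over $K$, and since algebraic elements in a real closed field are $\Lor$-definable over their minimal polynomial together with an interval isolating the root, each element of $\rc{K}$ lies in $\dcl(K;\rc{K})$; the analogous statement holds for $L$.

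Finally, I would argue by contrapositive. Suppose for contradiction that $L$ is not dense in $\rc{L}$. Applying \thref{lem:densedc} with the assignment above then yields that $K$ is not dense in $\rc{K}$, contradicting the hypothesis. Hence $L$ is dense in $\rc{L}$, as required. I do not anticipate any real obstacle here; the only subtlety worth being careful about is to invoke \thref{lem:densedc} in the right direction (with $\MM = (L,<)$ playing the role of the structure that is not dense in its extension), and to note explicitly that the reduct hypotheses of \thref{lem:densedc} are met because $T_{\mathrm{rcf}}$ subsumes $T_{\mathrm{doag}}$ on the ordered-group reduct.
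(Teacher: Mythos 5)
Your proposal is correct and matches the paper's own proof, which likewise applies \thref{lem:densedc} with $T=\Trcf$ (in contrapositive form, with the non-dense structure in the role of $\MM$) and notes that $\dcl(K;\rc{K})=\rc{K}$ since every element of the real closure is algebraic, hence $\Lor$-definable, over $K$. The extra hypothesis checks you spell out (completeness, quantifier elimination, o-minimality, and that $\Trcf$ subsumes $\Tdoag$ on the ordered-group reduct) are exactly what the paper leaves implicit.
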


	\begin{proof}
		This follows directly from \thref{lem:densedc} applied to $T = \Trcf$, noting that for an ordered field $(K,<)$, the definable closure of $K$ in $\rc{K}$ is $\rc{K}$.
	\end{proof}

	\begin{corollary}\thlabel{thm:densitytransfers2}	
		Let $(K,<)$ be an ordered field which has an ar\-chi\-me\-de\-an model. Then $K$ is dense in $\rc{K}$.
	\end{corollary}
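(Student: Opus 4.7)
The plan is to combine Remark~\ref{rmk:defthm}~(\ref{rmk:defthm:1}), which asserts that any archimedean ordered field is dense in its real closure, with the transfer principle established in Corollary~\ref{thm:densitytransfers}. Concretely, let $(L,<)$ be an archimedean ordered field with $(L,<) \equiv (K,<)$, which exists by hypothesis. Since $(L,<)$ is archimedean, we have $\Q \subseteq L \subseteq \rc{L} \subseteq \R$, and density of $\Q$ in $\R$ yields that $L$ is dense in $\rc{L}$.

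Now apply Corollary~\ref{thm:densitytransfers} with the roles of $K$ and $L$ reversed: from $(L,<) \equiv (K,<)$ and $L$ dense in $\rc{L}$, we conclude that $K$ is dense in $\rc{K}$. No further work is needed, and there is no real obstacle since both ingredients have already been established.
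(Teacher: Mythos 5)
Your proposal is correct and matches the paper's own proof exactly: the paper likewise invokes Remark \ref{rmk:defthm}~(\ref{rmk:defthm:1}) for density of the archimedean model in its real closure and then transfers this to $(K,<)$ via Corollary \ref{thm:densitytransfers}. No issues.
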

		
	\begin{proof}
		By \thref{rmk:defthm}~(\ref{rmk:defthm:1}), any archimedean ordered field is dense in its real closure. Thus, by \thref{thm:densitytransfers}, $K$ is dense in $\rc{K}$.
	\end{proof}

	\begin{remark}\thlabel{rmk:hahnfieldnoarch}
		Recall that by \thref{prop:hahnnoarch}, any non-divisible Hahn pro\-duct whose rank has no maximal element has no archimedean model. We obtain the following analogue for ordered Hahn fields:
		Let $(k,<)$ be an orderd field and let $G \neq \{0\}$ be an ordered abelian group. Suppose that $k\pow{G}$ is not real closed. By \thref{rmk:denseimmediatefield}, $k\pow{G}$ is not dense in $\rc{k\pow{G}}$, and thus $(k\pow{G},<)$ has no archimedean model.
	\end{remark}

	The class of ordered abelian groups which are dense in their divisible hull is recursively axiomatised by the axiom system for densely ordered regular abelian groups. In the following, we will show that also the class of ordered fields which are dense in their real closure can be recursively axiomatised. We will, again, first consider a model theoretically more general setting.

	\begin{proposition}\thlabel{prop:omindenserec}
		Let $\L$ be a language expanding $\Log$, let $T\supseteq T_{\mathrm{doag}}$ be a complete $\L$-theory which admits quantifier elimination and let $\Sigma \subseteq T$ be a theory extending the theory of ordered abelian groups.
		Then there is a theory $\Sigma' \supseteq \Sigma$ such that for any $\MM' \models T$ and any $\MM \subseteq \MM'$ with $\dcl(M;\MM') = M'$ we have that $\MM \models \Sigma'$ if and only if $\MM \models \Sigma$ and $M$ is dense in $M'$. Moreover, if $\Sigma$ and $T$ are recursive, we can also choose $\Sigma'$ to be recursive.
	\end{proposition}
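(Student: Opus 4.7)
The plan is to axiomatise density through a scheme indexed by pairs of $\L$-formulas representing pairs of elements of $M'$ definable over the same tuple from $M$; quantifier elimination in $T$ will then allow pulling the density condition back into $\L$-sentences evaluable in $\MM$.

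For each pair of $\L$-formulas $\varphi(\ul{x}, y)$ and $\psi(\ul{x}, y)$ (with $\ul{x}$ padded to a common length), I would let $\chi_{\varphi,\psi}(\ul{x})$ be the $\L$-formula expressing that $\varphi(\ul{x}, \cdot)$ and $\psi(\ul{x}, \cdot)$ each have a unique realisation in the $y$-variable and that the former is strictly less than the latter, and let $\delta_{\varphi,\psi}(\ul{x}, m)$ express that some realisations $y, y'$ of $\varphi$ and $\psi$ satisfy $y < m < y'$. Using QE for $T$, I would pick quantifier-free $\L$-formulas $\chi^*_{\varphi,\psi}$ and $\delta^*_{\varphi,\psi}$ equivalent modulo $T$ to $\chi_{\varphi,\psi}$ and $\delta_{\varphi,\psi}$, and set
$$\sigma_{\varphi,\psi} :\equiv \forall \ul{x}\, \bigl(\chi^*_{\varphi,\psi}(\ul{x}) \to \exists m\, \delta^*_{\varphi,\psi}(\ul{x}, m)\bigr).$$
Then $\Sigma' := \Sigma \cup \{\sigma_{\varphi,\psi} \mid \varphi, \psi \text{ are $\L$-formulas}\}$.

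To verify the biconditional, I would argue as follows. If $\MM \models \Sigma$ and $M$ is dense in $M'$, take $\ul{a} \in M$ with $\MM \models \chi^*_{\varphi,\psi}(\ul{a})$; preservation of quantifier-free formulas under $\MM \subseteq \MM'$ gives $\MM' \models \chi_{\varphi,\psi}(\ul{a})$, so $\varphi(\ul{a}, \cdot)$ and $\psi(\ul{a}, \cdot)$ determine $\alpha < \beta$ in $M'$; density then furnishes $m \in M$ with $\alpha < m < \beta$, yielding $\MM' \models \delta_{\varphi,\psi}(\ul{a}, m)$ and hence $\MM \models \delta^*_{\varphi,\psi}(\ul{a}, m)$, so $\sigma_{\varphi,\psi}$ holds. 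Conversely, if $\MM \models \Sigma'$, take $\alpha < \beta$ in $M'$. Since $\dcl(M; \MM') = M'$, after combining separately chosen parameter tuples and padding I may write $\alpha, \beta$ as the unique realisations of $\varphi(\ul{a}, \cdot), \psi(\ul{a}, \cdot)$ for a common $\ul{a} \in M$. Then $\MM' \models \chi_{\varphi,\psi}(\ul{a})$, so $\MM \models \chi^*_{\varphi,\psi}(\ul{a})$; applying $\sigma_{\varphi,\psi}$ yields $m \in M$ with $\MM \models \delta^*_{\varphi,\psi}(\ul{a}, m)$, i.e.\ $\alpha < m < \beta$ in $M'$, so $M$ is dense in $M'$.

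For the recursive clause, assuming $\Sigma$ and $T$ are recursive, the map $(\varphi, \psi) \mapsto \sigma_{\varphi,\psi}$ is computable: pairs of $\L$-formulas are recursively enumerable, and within a recursive theory with quantifier elimination one can compute a quantifier-free equivalent of any formula by enumerating QF candidates until $T$ proves equivalence (which is semi-decidable from a recursive axiomatisation and terminates by QE). Thus $\Sigma'$ admits a recursive axiomatisation. I expect the main technical care to lie in setting up $\chi_{\varphi,\psi}$ and $\delta_{\varphi,\psi}$ cleanly enough that their QF rewrites, evaluated in $\MM$, really test density at the corresponding pair of $M'$-points; once this transfer is in place the $\dcl$ hypothesis together with QF-preservation between $\MM$ and $\MM'$ make both directions of the biconditional routine.
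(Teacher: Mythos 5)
Your proof is correct and follows essentially the same route as the paper's: both use quantifier elimination in $T$ to turn the density condition at pairs of $\dcl$-points into quantifier-free (hence absolute between $\MM$ and $\MM'$) formulas, and take the resulting scheme together with $\Sigma$ as $\Sigma'$. The only difference is bookkeeping — the paper indexes the scheme by pairs of $0$-definable functions that differ away from $0$ and asserts that the two induced cuts on $M$ differ, whereas you guard arbitrary pairs of formulas by a quantifier-free functionality condition and assert strict betweenness directly, a harmless (arguably slightly cleaner) variant.
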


	\begin{proof}
		For any $0$-definable function $f$ in $T$, 
		let $\varphi_f(\ul{x},z)$ be a quatifier-free formula which is equivalent to $z < f(\ul{x})$ in $T$. Let $A$ be the set of all pairs $(f,g)$ of $0$-definable functions in $T$ with the same arity such that $T \models \forall \ul{x} \ (f(\ul{x}) = g(\ul{x}) \to \ul{x} = 0)$. In other words, $A$ consists of all pairs of $0$-definable functions which are distinct everywhere except possibly in $0$.
		Set 
		$$\Sigma' = \Sigma \cup \setbr{\forall \ul{x}\ (\ul{x} \neq 0 \to \neg \forall z\  (\varphi_f(\ul{x},z) \leftrightarrow \varphi_g(\ul{x},z) )) \mid (f,g) \in A}.$$ Note that if $\Sigma$ and $T$ are recursive, then so is $\Sigma'$, as it is decidable whether a given $\L$-formula defines a function. 
		
		Let $\MM' \models T$ and let $\MM \subseteq \MM'$ with $\dcl(M;\MM') = M'$. Suppose that $\MM \models \Sigma'$. We need to show that $M$ is dense in $M'$. Let $\alpha, \beta \in M'$ such that $\alpha < \beta$. Let $f$ and $g$ be $0$-definable functions and let $\ul{a} \in M$ such that $f(\ul{a}) = \alpha$ and $g(\ul{a}) = \beta$. Since also $\dcl(M\setminus\setbr{0};\MM') = M'$, we may assume that $\ul{a} \neq 0$. Hence, $\MM \models \neg \forall z\  (\varphi_f(\ul{a},z) \leftrightarrow \varphi_g(\ul{a},z) )$. This implies that for some $b \in M$ we either have $\beta \leq b < \alpha$ or $\alpha \leq b < \beta$. Hence, $M$ is dense in $M'$.
		Conversely, suppose that $\MM \models \Sigma$ and $M$ is dense in $M'$. Let $(f,g) \in A$ and $\ul{a} \in M$ with $\ul{a} \neq 0$. We need to show that $\MM \models \neg \forall z\  (\varphi_f(\ul{a},z) \leftrightarrow \varphi_g(\ul{a},z) )$. Let $\alpha = f(\ul{a})$ and $\beta = f(\ul{b})$. By assumption on $A$, we have $\alpha \neq \beta$, say $\alpha < \beta$. Since $M$ is dense in $M'$, there is $c \in M$ such that $\alpha < c < \beta$. Hence, $\MM \models \neg \varphi_f(\ul{a},c)$ but $\MM \models \varphi_g(\ul{a},c)$, as required.
		\end{proof}

	\begin{corollary}\thlabel{cor:denserec}
		There is a recursive $\Lor$-theory $\Sigma'$ which axiomatises the class of ordered fields which are dense in their real closure.
	\end{corollary}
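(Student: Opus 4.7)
The plan is to obtain \thref{cor:denserec} as a direct instantiation of \thref{prop:omindenserec}. I take $\L = \Lor$, choose $T = \Trcf$ (the theory of real closed ordered fields) and let $\Sigma$ be a recursive axiomatisation of the theory of ordered fields in $\Lor$. Then $\Trcf$ is complete, admits quantifier elimination in $\Lor$, extends $\Tdoag$, and is recursive; and $\Sigma \subseteq \Trcf$ satisfies the hypothesis that it extends the theory of ordered abelian groups. So \thref{prop:omindenserec} produces a recursive $\Lor$-theory $\Sigma' \supseteq \Sigma$ with the stated universal property.

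To conclude, I need to observe that the hypothesis $\dcl(M;\MM')=M'$ of \thref{prop:omindenserec} is automatically available in this setting: for any ordered field $(K,<)$, the real closure $\rc{K}$ is a model of $\Trcf$ containing $(K,<)$ and satisfies $\dcl(K;\rc{K}) = \rc{K}$ (every element of $\rc{K}$ is $0$-definable from $K$ as a specified root of a polynomial with coefficients in $K$, using the ordering to isolate the correct root). Therefore, applying \thref{prop:omindenserec} to $\MM = (K,<)$ and $\MM' = \rc{K}$, we have $(K,<) \models \Sigma'$ if and only if $(K,<) \models \Sigma$ and $K$ is dense in $\rc{K}$, i.e.\ if and only if $(K,<)$ is an ordered field which is dense in its real closure. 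Thus $\Sigma'$ axiomatises the desired class.

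There is no substantive obstacle; the only non-trivial point to verify is the identification $\dcl(K;\rc{K}) = \rc{K}$, which is a standard consequence of o-minimality of $\Trcf$ together with the fact that real closures are obtained by iteratively adjoining roots of polynomials over $K$.
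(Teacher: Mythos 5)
Your proposal is correct and follows exactly the paper's own route: instantiate \thref{prop:omindenserec} with $T=\Trcf$ and $\Sigma$ a recursive axiomatisation of ordered fields, using that $\dcl(K;\rc{K})=\rc{K}$ (a point the paper itself records in the proof of \thref{thm:densitytransfers}). No discrepancies.
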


	\begin{proof}
		We apply \thref{prop:omindenserec} to the recursive $\Lor$-theory $T = \Trcf$ and the axiom system $\Sigma$ for ordered fields in order to obtain a recursive $\Lor$-theory $\Sigma'$ with the required properties. 
	\end{proof}

	By \thref{thm:densitytransfers2}, any ordered field with an archimedean model is dense in its divisible hull. We will use \thref{cor:denserec} to show that the converse does not hold, i.e. that there exists an ordered field which is dense in its divisible hull but has no archimedean model.
	
	Let $(K,<)$ be an ordered field. An \textbf{integer part} of $K$ is a discretely ordered subring $(Z,<) \subseteq (K,<)$ with $1$ as least positive element such that for any $x \in K$ there exists $z\in Z$ with $z \leq x < z+1$.
	
	\begin{proposition}\thlabel{prop:ordfieldnoarchmodel}
		There is an ordered field $(K,<)$ such that $K$ is dense in $\rc{K}$ but $(K,<)$ has no archimedean model.
	\end{proposition}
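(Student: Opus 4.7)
The plan is to combine the recursive axiomatisation of ``dense in real closure'' from \thref{cor:denserec} with a computability-theoretic obstruction showing that ``having an archimedean model'' is a strictly more restrictive condition. Let $\Sigma'$ be the recursive $\Lor$-theory from \thref{cor:denserec}. By \thref{thm:densitytransfers2}, every archimedean ordered field is a model of $\Sigma'$, so it suffices to produce an $\Lor$-sentence $\varphi$ satisfied by every archimedean ordered field yet not a logical consequence of $\Sigma'$: any model $(K,<)$ of $\Sigma' \cup \{\neg \varphi\}$ is then dense in $\rc{K}$ while $\Th(K,<)$ cannot coincide with the theory of any archimedean ordered field, so $(K,<)$ has no archimedean model.

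To produce such a $\varphi$, consider the set $T_{\mathrm{arch}}$ of all $\Lor$-sentences true in every archimedean ordered field; then $\Sigma' \subseteq T_{\mathrm{arch}}$. The key observation is that in every archimedean ordered field $L$ the integer part coincides with the prime subring $\Z \subseteq L$, so one can encode arithmetical statements about $(\N,+,\cdot,<)$ into $\Lor$-sentences (using the numerals $\bar 0, \bar 1, \bar 1 + \bar 1, \ldots$ together with the integer-part relation $\bar n \leq x < \bar n + \bar 1$) whose truth values in any archimedean field are fixed by the standard model of arithmetic. This embeds true arithmetic into $T_{\mathrm{arch}}$, so $T_{\mathrm{arch}}$ is not recursively enumerable. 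Since the set of logical consequences of the recursive theory $\Sigma'$ \emph{is} recursively enumerable, the inclusion $\Sigma' \subseteq T_{\mathrm{arch}}$ is strict; a suitable $\varphi$ therefore exists, and a model $(K,<)$ of $\Sigma' \cup \{\neg\varphi\}$ is provided by G\"odel's completeness theorem.

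The main technical obstacle is to make the interpretation of arithmetic into $T_{\mathrm{arch}}$ precise: in a general model of $\Sigma'$ an integer part need not be $\Z$ and is not $\Lor$-definable, so one cannot simply relativise quantifiers to integers in a uniform way. The remedy is that across archimedean ordered fields the ``integer part'' assignment is uniformly determined by the order and the numerals, so when the translated sentence is evaluated in such a field it reduces to a genuine statement about $(\N,+,\cdot,<)$. Combined with the fact that $\Sigma'$, being a recursive theory extending the axioms of ordered fields, interprets Robinson arithmetic through its prime subring, this suffices to pick out a true arithmetical sentence whose $\Lor$-translation lies in $T_{\mathrm{arch}} \setminus \Sigma'$ and thus yields the desired $\varphi$.
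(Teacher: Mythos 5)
Your high-level reduction is sound: every archimedean ordered field is dense in its real closure and hence models $\Sigma'$, so the set $\mathrm{Cn}(\Sigma')$ of logical consequences of $\Sigma'$ is contained in $T_{\mathrm{arch}}$, and since $\mathrm{Cn}(\Sigma')$ is recursively enumerable, any proof that $T_{\mathrm{arch}}$ is not recursively enumerable yields the desired sentence $\varphi$ and, via the completeness theorem, the desired field. The gap lies in the only nontrivial step, the claim that $T_{\mathrm{arch}}$ is not r.e. As you yourself observe, $\Z$ is not $\Lor$-definable in a general archimedean ordered field --- it is not definable in $\R$, by o-minimality --- so there is no uniform way to relativise quantifiers to the integers, and numerals by themselves only let you express quantifier-free, hence decidable, arithmetic facts. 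Your proposed remedy does not close this: the statement that ``the integer-part assignment is uniformly determined by the order and the numerals'' is a semantic observation with no first-order content, and the assertion that $\Sigma'$ ``interprets Robinson arithmetic through its prime subring'' is false as stated, again because the prime subring is not definable in models of $\Sigma'$ such as $\R$.

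The missing ingredient is precisely the device the paper uses. Take J.~Robinson's $\Lor$-formula $\varphi(x)$ defining $\Z$ in $(\Q,<)$, let $\iota$ be the single $\Lor$-sentence asserting that $\varphi$ defines an integer part, and for an arithmetic sentence $\sigma$ let $\widetilde{\sigma}$ be $\sigma$ with all quantifiers relativised to $\varphi$. In an archimedean ordered field any integer part equals $\Z$, so an archimedean model of $\iota$ satisfies $\widetilde{\sigma}$ if and only if $\Z \models \sigma$; since $(\Q,<)\models \iota$, the sentence $\iota \to \widetilde{\sigma}$ belongs to $T_{\mathrm{arch}}$ if and only if $\Z \models \sigma$. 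This reduces true arithmetic to $T_{\mathrm{arch}}$, shows it is not r.e., and completes your argument. With this repair your proof becomes essentially the paper's: there, $\iota$ is adjoined to the axioms as $\Sigma_2$, one argues that not every relativised true arithmetic sentence can be provable from the recursive theory $\Sigma_1\cup\Sigma_2$ on pain of $\Th(\Z,+,-,\cdot,0,1,<)$ being decidable, and one then takes a model of $\Sigma_1\cup\Sigma_2\cup\{\widetilde{\sigma}\}$ for a suitable $\sigma$ false in $\Z$; your formulation in terms of $T_{\mathrm{arch}}$ versus $\mathrm{Cn}(\Sigma')$ is an equivalent packaging once the reduction is actually in place.
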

	
	\begin{proof}
		Let $\varphi(x)$ be an $\Lor$-formula defining $\Z$ in $(\Q,<)$ (cf. \cite[Theorem~3.1]{robinson2}). Let $\Sigma_1$ be the $\Lor$-theory $\Sigma'$ from \thref{cor:denserec} and let $\Sigma_2$ be a set of axioms stating that $\varphi(x)$ defines an integer part. For any $\Lor$-formula $\alpha$, let $\widetilde{\alpha}$ be the formula in which all quantifiers $\exists x$ and $\forall x$ are bounded by $\varphi(x)$, that is, all instances of subformulas of the form $\exists x \theta(x,\ul{y})$ are replaced by $\exists x (\varphi(x) \wedge \theta(x,\ul{y}))$ and all instances of subformulas of the form $\forall x \theta(x,\ul{y})$ are replaced by $\forall x (\varphi(x) \to \theta(x,\ul{y}))$. Let $\Sigma$ be the deductive closure of $\Sigma_1 \cup \Sigma_2$. Note that $\Sigma$ is recursive  and $(\Q,<) \models \Sigma$. 
		If for every $\Lor$-sentence $\alpha \in \Th(\Z,+,-,\cdot,0,1,<)$, we had $\Sigma \vdash \widetilde{\alpha}$, then $\Th(\Z,+,-,\cdot,0,1,<)$ would be decidable. Hence, there is an $\Lor$-sentence $\sigma$ such that $\neg \sigma \in \Th(\Z,+,-,\cdot,0,1,<)$ and $\widetilde{\sigma},\neg\widetilde{\sigma} \notin \Sigma$. Let $\widetilde{\Sigma}$ be the consistent $\Lor$-theory $\Sigma \cup \setbr{\widetilde{\sigma}}$ and let $(K,<) \models \widetilde{\Sigma}$. Assume that $(K,<)$ is archimedean. Since $(K,<) \models \Sigma_2$, the formula $\varphi(x)$ defines the integer part $\Z$ of $K$. But then $(K,<) \models \neg \widetilde{\sigma}$, a contradiction. Hence, $(K,<)$ cannot have any archimedean models. However, since $(K,<) \models \Sigma_1$, we have that $K$ is dense in $\rc{K}$.
	\end{proof}

	We now analyse algebraic properties of ordered fields which are dense in their real closure. For an ordered field $K$, let $\mathbf{A}_K$ be a group complement to $\OO_v$ in $K$, i.e. $\mathbf{A}_K$ is an ordered subgroup of $K$ such that $K = \mathbf{A} \oplus \OO_v$. Note that for an extension of ordered field $(K,<) \subseteq (L,<)$, the group complement $\mathbf{A}_K$ can be extended to a group completement $\mathbf{A}_L$. The following is a useful characterisation of dense extension of ordered field (cf. \cite[Lemma~1.32]{kuhlmann}).
	
	\begin{fact}
		Let $(K,<) \subseteq (L,<)$ be an extension of ordered fields. Then $K$ is dense in $L$ if and only if $\mathbf{A}_K = \mathbf{A}_L$.
	\end{fact}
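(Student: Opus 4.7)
The plan is to prove the two implications separately: the forward direction ($K$ dense in $L \Rightarrow \mathbf{A}_K = \mathbf{A}_L$) is a quick uniqueness argument, while the backward direction requires a case analysis on the $\mathbf{A}_K$-decomposition and, in its hardest case, an iteration on the valuation-theoretic layers of $\OO_v$.

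For the forward direction, since the construction ensures $\mathbf{A}_K \subseteq \mathbf{A}_L$, I would only need to verify the reverse inclusion. Given $\ell \in \mathbf{A}_L$, density of $K$ in $L$ produces $k \in K$ with $|k-\ell| < 1$, so that $k-\ell$ lies in $\OO_v^L$ (where I write $\OO_v^K$ and $\OO_v^L$ for the valuation rings of the natural valuation in $K$ and $L$, respectively). Decomposing $k = a + r$ in $K$ with $a \in \mathbf{A}_K$ and $r \in \OO_v^K \subseteq \OO_v^L$ yields $k - \ell = (a - \ell) + r$. Since $a - \ell \in \mathbf{A}_L$ and $r \in \OO_v^L$, this is the $\mathbf{A}_L$-decomposition of $k - \ell \in \OO_v^L$; by uniqueness of decomposition the $\mathbf{A}_L$-part vanishes, so $\ell = a \in \mathbf{A}_K$.

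For the backward direction, assume $\mathbf{A}_K = \mathbf{A}_L$ and take $\ell_1 < \ell_2$ in $L$. Decompose $\ell_i = a_i + r_i$ with $a_i \in \mathbf{A}_K \subseteq K$ and $r_i \in \OO_v^L$. If $a_1 \neq a_2$, the inequality forces $a_1 < a_2$, and the nonzero element $a_2 - a_1 \in \mathbf{A}_K$ is purely infinite; choosing $N \in \N$ with $|r_1|, |r_2| \leq N$, the element $k = a_1 + N + 1 \in K$ lies strictly between $\ell_1$ and $\ell_2$. If $a_1 = a_2$, subtracting the common value reduces the task to finding $k' \in K$ with $r_1 < k' < r_2$ for $r_1, r_2 \in \OO_v^L$. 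When $r_2 - r_1 \notin \MM_v^L$, the residues satisfy $\ol{r_1} < \ol{r_2}$ strictly in the residue field $Lv$, which is archimedean and contains $\Q$; any rational $q$ with $\ol{r_1} < q < \ol{r_2}$ then satisfies $r_1 < q < r_2$.

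The main obstacle is the remaining sub-case of an infinitesimal gap, $r_2 - r_1 \in \MM_v^L$. Here I would first observe that $\mathbf{A}_K = \mathbf{A}_L$ forces $\vnat_K(K^\times) = \vnat_L(L^\times)$: applying the decomposition to the infinite element $(r_2 - r_1)^{-1}$ shows that its purely infinite part lies in $\mathbf{A}_K \subseteq K$, and every negative value in $\vnat_L(L^\times)$ arises this way (positive values being handled via inverses). Hence $\gamma := \vnat_L(r_2 - r_1)$ is realized by some $s \in K^\times$, and after rescaling by $s$ the interval becomes non-infinitesimal on a coarser valuation layer, where the earlier residue-field argument applies. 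The proof then concludes by recursion on the valuation-theoretic filtration of $\OO_v^L$, ultimately bottoming out on the density of $\Q$ in each archimedean residue quotient.
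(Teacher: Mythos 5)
The paper does not actually prove this statement: it is quoted as a Fact from Kuhlmann's \emph{Ordered Exponential Fields} (Lemma~1.32), so there is no in-paper proof to compare yours against. Judged on its own merits, your argument is essentially correct. The forward direction is clean: since $\mathbf{A}_L$ is chosen to extend $\mathbf{A}_K$, uniqueness of the decomposition of $k-\ell$ inside the convex subring of finite elements of $L$ does force $\ell=a\in\mathbf{A}_K$. In the backward direction, Case~1 and the non-infinitesimal sub-case of Case~2 are fine, and your key observation for the remaining sub-case --- that $\mathbf{A}_K=\mathbf{A}_L$ forces $\vnat(K^\times)=\vnat(L^\times)$, so the value $\gamma$ of the gap is realised by some $s\in K^{>0}$ --- is exactly the right ingredient.

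Two points in that last sub-case need tightening. First, after dividing by $s$ the endpoints $r_1/s$ and $r_2/s$ may well be infinite (take $r_i=1+\epsilon_i$ with $\epsilon_i$ infinitesimal), so ``the earlier residue-field argument'' does not apply verbatim; you must first subtract the common purely infinite part of the rescaled endpoints, which lies in $\mathbf{A}_L=\mathbf{A}_K\subseteq K$ --- and it \emph{is} common to both precisely because their difference $(r_2-r_1)/s$ has value $0$, hence is finite, so the $\mathbf{A}_L$-components of the two rescaled endpoints agree. Second, the ``recursion on the valuation-theoretic filtration'' is an over-complication, and as stated it leaves termination unaddressed (the value set of $\vnat$ need not be well-ordered, so a blind descent through layers is not obviously finite). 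In fact no recursion is needed: a single rescaling brings the gap to value exactly $0$, a single translation by an element of $\mathbf{A}_K$ makes both endpoints finite with distinct residues, and density of $\Q$ in the archimedean residue field of the natural valuation finishes the proof; the element of $K$ you produce is $(a'+q)s$ for the appropriate $a'\in\mathbf{A}_K$ and $q\in\Q$. With that single-pass reorganisation the argument is complete and correct.
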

	
	Recall that any dense extension of ordered fields is immediate (cf. \cite[p.~29~f.]{kuhlmann}). In particular, if the extension of ordered fields $(K,<)\subseteq (\rc{K},<)$ is dense, then $Kv$ is real closed and $vK$ is divisible. The converse holds under the additional assumption that $vK$ is archimedean (cf. \cite[Lemma~19]{kuhlmann2}).
	
	\begin{fact}\thlabel{fact:immimpdense}
		Let $(K,<)$ be an ordered field such that $Kv$ is real closed and $vK$ is divisible and archimedean. Then $K$ is dense in $\rc{K}$.
	\end{fact}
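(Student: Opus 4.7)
The plan is to identify $\rc{K}$ with the henselization $K^h$ of $(K,v)$ and then use the $v$-topological density of $K$ in $K^h$ together with a comparison between the $v$-topology and the order topology on $\rc{K}$.

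Since the henselization is an immediate algebraic extension, $K^h v = Kv$ is real closed and $v K^h = vK$ is divisible. By a classical theorem (cf.\ \cite{engler}), any henselian valued field with real closed residue field and divisible value group is real closed, so $K^h$ is real closed. By the Baer--Krull Representation Theorem (cf.\ \cite[p.~37~f.]{engler}) there is a unique ordering on $(K^h,v)$ compatible with $v$, and it extends the given ordering on $K$. Hence $(K^h,<)$ is a real closed ordered extension of $(K,<)$, yielding $\rc{K}\subseteq K^h$. Conversely, $\rc{K}$ is real closed, and so its natural valuation is henselian; it restricts to $v$ on $K$, and $\rc{K}/K$ is algebraic, so the universal property of the henselization gives $K^h\subseteq \rc{K}$. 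Combining, $K^h = \rc{K}$.

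It is standard (cf.\ \cite{engler}) that $K$ is dense in its henselization $K^h$ with respect to the $v$-topology. Furthermore, the $v$-topology on $\rc{K}$ is finer than the order topology: given $c\in\rc{K}$ and $\epsilon\in\rc{K}^{>0}$, the $v$-open neighbourhood $\setbr{y\in\rc{K}\mid v(y-c)>v(\epsilon)}$ is contained in the interval $(c-\epsilon,c+\epsilon)$, because $v(y-c)>v(\epsilon)$ forces $n|y-c|<\epsilon$ for all $n\in\N$ by definition of the natural valuation. Hence $v$-density of $K$ in $\rc{K}$ entails density in the order topology, proving the claim.

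The main obstacle is the identification $\rc{K}=K^h$, which rests on the nontrivial theorem that a henselian valued field with real closed residue and divisible value group is itself real closed. Interestingly, the archimedean hypothesis on $vK$ does not appear essential in this route; an alternative proof constructing each $y\in\rc{K}$ via a pseudo-Cauchy sequence over $K$ (available by immediateness) would instead use archimedeanness to force the iterative approximations to reach any prescribed value in $vK$.
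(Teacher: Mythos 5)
The paper offers no proof of this statement (it is quoted from \cite[Lemma~19]{kuhlmann2}), so your argument has to stand on its own. Two of your three steps do: the identification $\rc{K}=K^h$ is correct (and, as you note, uses neither archimedeanity nor anything beyond Hensel, Baer--Krull and the fact that a henselian field with real closed residue field and divisible value group is real closed), and the comparison of topologies is correct (for the natural valuation, $v(y-c)>v(\epsilon)$ does imply $|y-c|<\epsilon$). The gap is the middle step: it is \emph{not} true for an arbitrary valued field that $K$ lies dense in $K^h$ in the $v$-topology. A counterexample of rank $2$: let $K=\Q(x,y)$ with $v$ the valuation with value group $\Z\oplus\Z$ (lexicographic), $v(x)=(0,1)$, $v(y)=(1,0)$ and residue field $\Q$. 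Hensel's lemma puts $a=\sqrt{1+x}$ into $K^h$. Letting $w$ be the rank-$1$ coarsening of $v$ obtained by projecting onto the first coordinate, one computes $Kw=\Q(\bar{x})$ with $\bar{x}$ transcendental, while $a$ has $w$-residue $\sqrt{1+\bar{x}}\notin\Q(\bar{x})$; hence $v(a-b)<v(y)$ for every $b\in K$, and the $v$-ball $\setbr{z\mid v(z-a)>v(y)}$ misses $K$ entirely.

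This is exactly where the archimedean hypothesis must enter, so your closing remark that it ``does not appear essential in this route'' is wrong. When $vK$ is archimedean the valuation has rank $1$, so the completion $\widehat{K}$ is henselian; the universal property of the henselization then embeds $K^h$ into $\widehat{K}$ over $K$, and density of $K$ in $\widehat{K}$ gives density in $K^h$. That is the justification your step needs, and it cannot be dispensed with: the example cited in the paper immediately after this statement (\cite[Remark~3.5]{biljakovic}) is an ordered field with $Kv$ real closed and $vK$ divisible but non-archimedean which is \emph{not} dense in $\rc{K}$ --- yet your first step still yields $\rc{K}=K^h$ for that field, so density in the henselization genuinely fails there. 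With the rank-$1$ justification inserted, your route is a valid and reasonably clean proof.
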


	\cite[Remark~3.5]{biljakovic}
	provides an example of an ordered field field $(K,<)$ such that $Kv$ is real closed and $vK$ can be chosen to be divisible and non-archimedean (i.e. $K\subseteq \rc{K}$ is immediate) but $\mathbf{A}_K = \mathbf{A}_{\rc{K}}$, whence $K$ is not dense in $\rc{K}$.
	
	We conclude this section by analysing density in real closure in terms of dense transcendence bases. For a field $F$, we denote the transcendence degree over its prime field by $\td(F)$. We say that $T \subseteq F$ is a transcendence basis of $F$ if it is a transcendence basis over its prime field.
	
	The following result is due to \cite[Lemma~2.3]{erdos}.
	
	\begin{fact}\thlabel{lem:lemmaerdos}
		Let $(K,<)$ be an uncountable ordered field. Then there exists a transcendence basis $T$ of $K$ over $\Q$ such that $T$ is dense in $K$.
	\end{fact}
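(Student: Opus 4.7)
The plan is to construct $T$ by transfinite recursion, selecting a transcendental element inside each non-empty open interval of $K$. The preparatory step is to show that every non-empty open interval of $K$ has the same cardinality as $K$. Given $a<b$ in $K$, the map
\[
\phi \colon K \to (a,b), \quad \phi(x) = \frac{a+b}{2} + \frac{b-a}{2}\cdot \frac{x}{1+|x|},
\]
is strictly increasing (since $x/(1+|x|)$ is strictly increasing on $K$ with image contained in the open interval $(-1,1)$), so $|K| \leq |(a,b)| \leq |K|$. Set $\kappa = |K|$, which is uncountable by hypothesis.

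Since there are at most $|K|^2 = \kappa$ pairs of endpoints, and at least $\kappa$ distinct open intervals in $K$ (e.g.\ the intervals $(0,c)$ for $c \in K^{>0}$), enumerate the non-empty open intervals of $K$ as $(I_\alpha)_{\alpha < \kappa}$. Carry out a transfinite recursion of length $\kappa$: at stage $\alpha$, let $F_\alpha = \Q(t_\beta \mid \beta < \alpha)$ and choose $t_\alpha \in I_\alpha$ transcendental over $F_\alpha$. This is possible because the relative algebraic closure of $F_\alpha$ in $K$ has cardinality $\max(|\alpha|,\aleph_0) < \kappa$, whereas $|I_\alpha| = \kappa$ by the previous step, so some element of $I_\alpha$ must lie outside that algebraic closure.

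Set $T_0 = \setbr{t_\alpha \mid \alpha < \kappa}$. By construction $T_0$ is algebraically independent over $\Q$, and it meets every non-empty open interval of $K$, hence is dense in $K$. Using Zorn's Lemma, extend $T_0$ to a transcendence basis $T$ of $K$ over $\Q$; since $T_0 \subseteq T$ is already dense in $K$, so is $T$.

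The main obstacle is the cardinality estimate for open intervals: this is the one place where uncountability of $K$ is genuinely used, since otherwise the algebraic closure of the previously chosen $t_\beta$ could catch up with $|I_\alpha|$ and stall the recursion. Once that estimate is secured, the remainder is a routine diagonalisation.
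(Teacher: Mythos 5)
Your proof is correct, and it is essentially the standard argument: the paper itself states this result as a Fact, citing Erd\H{o}s--Gillman--Henriksen without reproducing a proof, and your transfinite recursion over an enumeration of the intervals is exactly the expected route. It is also the natural uncountable analogue of the paper's own proof of the countable case (\thref{prop:lemmaerdos}), the only difference being that where the paper avoids stalling by appealing to the transcendence degree, you use the cardinality estimate $|(a,b)|=|K|>\max(|\alpha|,\aleph_0)$, which is precisely where uncountability enters, as you correctly point out.
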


	The arguments in \cite{erdos} can be generalised to countable ordered fields with countably infinite transcendence basis.
	
	\begin{proposition}\thlabel{prop:lemmaerdos}
		Let $(K,<)$ be an ordered field with $\td(K) = \aleph_0$. Then there exists a transcendence basis $T=\{t_1,t_2,\ldots\}$ of $K$ such that $T$ is dense in $K$.
	\end{proposition}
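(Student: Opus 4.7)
The strategy is a recursive construction in countably many stages that simultaneously ensures algebraic independence of $T$, that every element of $K$ is algebraic over $T$, and that $T$ meets every non-empty open interval of $K$. Since $\td(K) = \aleph_0$, the field $K$ is countable; enumerate $K = \{a_1, a_2, \ldots\}$ and the countable collection of pairs $\{(a, b) \in K^2 \mid a < b\}$ as $((c_n, d_n))_{n \in \N}$.

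The main ingredient, replacing the cardinality argument used in \thref{lem:lemmaerdos}, is the following key lemma: \emph{for any finitely generated subfield $F \subseteq K$ and any $a < b$ in $K$, there exists $t \in (a, b)$ transcendental over $F$.} To prove it, note that $F(a, b)$ is still finitely generated and thus has finite transcendence degree over $\Q$, so by $\td(K) = \aleph_0$ there exists $c \in K$ transcendental over $F(a, b)$. Replacing $c$ by $-c$ if necessary, assume $c > 0$, and set $t = a + (b - a)/(1 + c) \in (a, b)$. The relation $c = (b - a)/(t - a) - 1$ shows $c \in F(a, b, t)$, so transcendence of $c$ over $F(a, b)$ forces $t$ to be transcendental over $F(a, b)$, and hence over $F$.

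Using this lemma, I would construct $T = \{t_1, t_2, \ldots\}$ by induction, maintaining that $\{t_1, \ldots, t_n\}$ is algebraically independent over $\Q$. At the odd stage $2n - 1$, apply the key lemma with $F = \Q(t_1, \ldots, t_{2n-2})$ and interval $(c_n, d_n)$ to obtain $t_{2n-1} \in (c_n, d_n)$ transcendental over $F$. At the even stage $2n$, set $t_{2n} := a_n$ if $a_n$ is transcendental over $\Q(t_1, \ldots, t_{2n-1})$; otherwise, apply the key lemma to any fixed interval to pick some $t_{2n}$ transcendental over $\Q(t_1, \ldots, t_{2n-1})$. By construction, $T$ is algebraically independent. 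Each $a_n$ is algebraic over $T$: either $a_n = t_{2n} \in T$, or $a_n$ was already algebraic over $\Q(t_1, \ldots, t_{2n-1}) \subseteq \Q(T)$ at stage $2n$. Hence $T$ is a transcendence basis, and density of $T$ in $K$ follows from the odd stages, which place an element of $T$ into every interval with endpoints in $K$.

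The main obstacle is the key lemma: unlike the uncountable case of \thref{lem:lemmaerdos}, where a straightforward cardinality argument produces a transcendental in every interval, the countable setting requires an explicit construction, because the algebraic closure (in $K$) of a finitely generated subfield is itself countable and could, a priori, cover the given interval. The substitution $t = a + (b - a)/(1 + c)$ resolves this by using the field operations to transport an arbitrary positive transcendental $c \in K$ into the prescribed interval via an $F(a, b)$-rational bijection whose inverse keeps $c$ inside $F(a, b, t)$.
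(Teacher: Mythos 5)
Your proposal is correct and follows essentially the same route as the paper: enumerate the intervals of $K$ and recursively choose in each one an element transcendental over the finitely generated field built so far, the only difference being that you produce such an element by the explicit substitution $t = a + (b-a)/(1+c)$, whereas the paper argues by contradiction that an interval consisting entirely of elements algebraic over $\Q(t_1,\ldots,t_n)$ would force all of $K$ to be algebraic over that field. Your interleaved even stages, which guarantee that every element of $K$ becomes algebraic over $T$, are a welcome extra precaution: the paper's construction as written only directly yields a dense algebraically independent set, which must still be enlarged to a transcendence basis (density being preserved under enlargement).
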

	
	\begin{proof}
		Let $(I_n)_{n\in\N}$ be an enumeration of all intervals $(a,b) \subseteq K$. We construct a transcendence basis $\{t_1,t_2,\ldots\}$ of $K$ over $\Q$ such that $t_k \in I_k$ for any $k$.
		
		Let $t_1 \in I_1$ be an arbitrary element transcendental over $\Q$. Suppose that $t_1,\ldots,t_{n}$ have already been chosen for some $n$. If all elements in $I_{n+1}$ were algebraic over $\Q(t_1,\ldots,t_n)$, then also $K$ would be algebraic over $\Q(t_1,\ldots,t_n)$, contradicting its transcendence degree. Hence, we can choose $t_{n+1} \in I_{n+1}$ which is transcendental over $\Q(t_1,\ldots,t_n)$.
	\end{proof}

	\begin{corollary}\thlabel{cor:lemmaerdos}
		Let $(K,<)$ be an ordered field. Suppose that $\td(K) \geq \aleph_0$. Then $K$ is dense in $\rc{K}$ if and only if $K$ has a transcendence basis $T$ which is dense in $\rc{K}$.
	\end{corollary}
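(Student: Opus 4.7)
The plan is to split into cases by cardinality and invoke the existing density-of-transcendence-basis results, then transfer density from $K$ to $\rc{K}$ through transitivity.

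For the forward direction, assume $K$ is dense in $\rc{K}$. I first handle the uncountable case: by \thref{lem:lemmaerdos}, $K$ admits a transcendence basis $T$ that is dense in $K$. I then argue that density is transitive in the order-topological sense: if $T$ is dense in $K$ and $K$ is dense in $\rc{K}$, then given any $\alpha<\beta$ in $\rc{K}$, we first find $a\in K$ with $\alpha<a<\beta$ (by density of $K$ in $\rc{K}$), then pick $\varepsilon\in\rc{K}$ with $0<\varepsilon<\min(a-\alpha,\beta-a)$, apply density of $K$ in $\rc{K}$ once more to obtain $a'\in K$ with $\alpha<a'<a$ close enough that $a'\in(\alpha,\beta)\cap K$ lies in an interval $(c,d)\subseteq K$ with $(c,d)\subseteq(\alpha,\beta)$, and finally use density of $T$ in $K$ to find $t\in T\cap(c,d)\subseteq(\alpha,\beta)$. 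Thus $T$ is dense in $\rc{K}$.

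If instead $K$ is countable, then because $\td(K)\geq\aleph_0$ and $K$ itself is countable we have $\td(K)=\aleph_0$, so \thref{prop:lemmaerdos} applies and yields a transcendence basis $T$ of $K$ dense in $K$; the same transitivity argument produces density of $T$ in $\rc{K}$.

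The backward direction is immediate: if $T\subseteq K\subseteq\rc{K}$ and $T$ is dense in $\rc{K}$, then a fortiori $K$ is dense in $\rc{K}$, since any open interval of $\rc{K}$ meeting $T$ also meets $K$.

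There is no serious obstacle here; the only point requiring a line of care is the transitivity-of-density step, which follows at once from the definition of density in the order topology once one observes that an open interval in $\rc{K}$ contains an open interval with endpoints in $K$ (by density of $K$ in $\rc{K}$), into which the intermediate density of $T$ in $K$ then drops an element of $T$.
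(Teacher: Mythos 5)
Your proposal is correct and follows essentially the same route as the paper: the backward direction is the trivial "subset dense implies superset dense" observation, and the forward direction combines \thref{lem:lemmaerdos} (uncountable case) with \thref{prop:lemmaerdos} (countable case, where $\td(K)=\aleph_0$ is forced) to get a transcendence basis dense in $K$, then transfers density to $\rc{K}$ by the transitivity argument you spell out. The paper states the transitivity step without proof; your elaboration of it (an interval of $\rc{K}$ contains a subinterval with endpoints in $K$, which then meets $T$) is a correct filling-in rather than a departure.
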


	\begin{proof}
		Note that any transcendence basis of $K$ is a transcendence basis of $\rc{K}$. If $K$ has a transcendence basis $T$ which is dense in $\rc{K}$, then $K$ has a proper subset dense in $\rc{K}$ and thus $K$ itself is dense in $\rc{K}$. Conversely, suppose that $K$ is dense in $\rc{K}$. By \thref{lem:lemmaerdos} and  \thref{prop:lemmaerdos}, $K$ has a transcendence $T$ basis which is dense in $K$. Since $K$ is dense in $\rc{K}$, also $T$ is dense in $\rc{K}$.
	\end{proof}

	\section{Strongly NIP Ordered Fields}\label{sec:conjecturalclassification}

	In this section we will study the class of strongly NIP ordered fields in the light of \thref{conj:main} and \thref{conj:classification}. There are several results known about strongly NIP fields in the language $\Lr$ (cf. \cite{halevi2, halevi}). Note that throughout this section, we consider ordered fields which are strongly NIP in the language $\Lor$. 
	
	A special class of strongly NIP ordered fields are dp-minimal ordered fields. These are fully classified in \cite{jahnke}. In  \thref{prop:dparc} below we show that our query \ref{query} holds for dp-minimal ordered fields.
	An ordered group $G$ is called \textbf{non-singular} if $G/pG$ is finite for all prime numbers $p$.
	
	\begin{fact}\thlabel{fact:dpgroup} \emph{(See \cite[Proposition~5.1]{jahnke}.)}
		An $\aleph_1$-saturated ordered abelian group $G$ is dp-minimal if and only if it is non-singular.
	\end{fact}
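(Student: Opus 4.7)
The plan is to handle the two directions separately, working throughout in a monster model of $\Th(G)$ and making essential use of the $\aleph_1$-saturation hypothesis.

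For the forward direction, that dp-minimality implies non-singularity, I would argue by contraposition. Suppose $G/pG$ is infinite for some prime $p$. Using $\aleph_1$-saturation I would produce a sequence $(a_n)_{n \in \omega}$ whose elements lie in pairwise distinct cosets of $pG$, and independently an increasing sequence $(b_n)_{n \in \omega}$ realising a non-algebraic cut in $G$. After passing to mutually indiscernible subsequences via the standard Ramsey-style extraction available for NIP theories, I would verify that the pair of formulas $y \equiv a_m \pmod{p}$ and $y > b_n$ together form an ict-pattern of depth $2$: a single element can independently realise a prescribed coset modulo $p$ and a prescribed position in the cut, contradicting the assumption $\dprk(\{x=x\},\emptyset) = 1$.

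For the backward direction, my plan is to invoke a quantifier-elimination theorem for ordered abelian groups in a suitable expanded language (Gurevich--Schmitt, or Cluckers--Halupczok), which decomposes every definable set into a boolean combination of linear inequalities, congruences modulo integers, and auxiliary predicates encoding the convex-subgroup and archimedean-component structure. Non-singularity ensures that $G/nG$ is finite for every $n \in \N$, so the congruence ingredients partition $G$ into only finitely many classes and thus contribute dp-rank $0$, while the linear inequalities contribute dp-rank at most $1$ by a weakly o-minimal analysis of the ordering. The main task is then to combine these contributions and show that for any two mutually indiscernible sequences over a parameter set, at least one remains indiscernible after adjoining an arbitrary further element.

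The main obstacle is the backward direction, where one must handle the auxiliary sorts in the quantifier elimination carefully: the archimedean components and the families of convex subgroups indexed by primes a priori carry their own definable structure, and one has to argue that finiteness of each $G/pG$ controls how much of this structure any single formula can detect, thereby keeping the overall dp-rank at $1$. The $\aleph_1$-saturation is used on both sides, to realise enough cosets and cut positions in the forward direction and to extract mutually indiscernible sequences in the backward direction.
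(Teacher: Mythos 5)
This statement is imported into the paper as a \emph{Fact} with a citation to \cite[Proposition~5.1]{jahnke}; the paper contains no proof of it, so there is no in-paper argument to compare yours against, and I can only assess your sketch on its own terms (its overall strategy does match that of the cited source). The genuine gap is in your forward direction. A depth-$2$ ict- (or inp-) pattern requires every \emph{cell} to be consistent: for each pair $(i,j)$ there must be a single element lying both in the coset $a_i+pG$ and in the $j$-th slot cut out by the $b$-sequence. Choosing the two sequences ``independently'' and then extracting mutually indiscernible subsequences does not deliver this: mutual indiscernibility is a property of the parameter array and cannot create joint consistency of the cells. This is not a formal quibble. Take $G=\bigoplus_{n\in\omega}\Z e_n$ ordered so that $0<e_0\ll e_1\ll e_2\ll\cdots$; then $G/pG\cong(\Z/p\Z)^{(\omega)}$ is infinite, yet every bounded interval $[-g,g]$ is contained in some finite-rank convex subgroup $\bigoplus_{m\leq n}\Z e_m$ and hence meets at most $p^{n+1}$ cosets of $pG$, so any separately chosen infinite family of pairwise disjoint slots together with any infinite family of cosets leaves almost all cells empty. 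The repair --- and the place where saturation is genuinely used --- is to choose the two rows \emph{together}: the partial type in the variable $g$ asserting ``$[-g,g]$ meets more than $k$ cosets of $pG$'' for all $k$ is finitely satisfiable (any $k+1$ pairwise incongruent elements are bounded by some $g$), so in a sufficiently saturated model one finds $g^*>0$ and elements $a_0,a_1,\ldots$ in $[-g^*,g^*]$ pairwise incongruent modulo $p$; taking the second row to be the pairwise disjoint intervals $I_j=[-g^*,g^*]+2jpg^*$, whose translating elements lie in $pG$, every $I_j$ meets every coset $a_i+pG$, and all cells are consistent.

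Your backward direction is pointed in the right direction but is a plan rather than a proof, and the step ``congruences contribute dp-rank $0$, inequalities contribute dp-rank $1$, then combine'' is not something dp-rank licenses: dp-rank is subadditive over coordinates of a tuple, not over syntactic ingredients of a quantifier elimination. What must actually be shown --- and what you correctly flag as the main task --- is that non-singularity forces the auxiliary sorts of the Gurevich--Schmitt/Cluckers--Halupczok elimination to be finite, so that any formula $\varphi(x,\ul{b})$ defines a boolean combination of intervals with endpoints definable from $\ul{b}$ and of cosets of finitely many fixed finite-index subgroups, after which one runs the two-mutually-indiscernible-sequences criterion by hand. As written, the decisive steps on both sides are asserted rather than proved, and the forward direction as literally described would not go through.
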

	
	\begin{example}\thlabel{ex:dpmin}
		Let $G$ be an $\aleph_1$-saturated extension of the ordered abelian group $\Z$. Since $\Z$ is non-singular, by \thref{fact:dpgroup} $G$ is a dp-minimal ordered abelian group which is discretely ordered.
	\end{example}
	
	\begin{fact}\thlabel{fact:dpfield} \emph{(See \cite[Theorem~6.2]{jahnke}.)}
		An ordered field $(K,<)$ is dp-minimal if and only if there exists a non-singular ordered abelian group $G$ such that $(K,<) \equiv (\R\pow{G},<)$. 
	\end{fact}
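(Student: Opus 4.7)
The plan is to prove both directions by combining the Ax--Kochen--Ershov principle with Jahnke--Simon--Walsberg's classification of dp-minimal ordered abelian groups (\thref{fact:dpgroup}) and the dp-minimal field classification of Johnson.

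For the backward direction, suppose $(K,<) \equiv (\R\pow{G},<)$ with $G$ non-singular. Since dp-minimality is preserved under elementary equivalence, it suffices to show that $(\R\pow{G},<)$ is dp-minimal. I would consider the ordered valued field $(\R\pow{G},<,\vmin)$. Its residue field is $\R$, which is o-minimal and hence dp-minimal, and its value group is $G$, which is non-singular and $\aleph_1$-saturated in some elementary extension, hence dp-minimal by \thref{fact:dpgroup}. An Ax--Kochen--Ershov style transfer for dp-rank (in the ordered henselian setting, as developed in \cite{jahnke} and \cite{farre}) then yields that $(\R\pow{G},<,\vmin)$ is dp-minimal, and reducing to $\Lor$ preserves dp-minimality.

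For the forward direction, suppose $(K,<)$ is dp-minimal. The main step is to produce a henselian valuation $v$ on $K$ with real closed residue field $Kv$, i.e.\ to establish that $K$ is almost real closed. Since $(K,<)$ is in particular strongly NIP and ordered, it is not algebraically closed; applying Johnson's classification of dp-minimal fields (plus the observation that ordered fields admitting an ordering must fall into the real-closed or henselian case), one produces such a $v$, which is automatically convex by \thref{fact:hensconv}. Setting $G = vK$, the reduct of $(K,<,v)$ to the value group sort is interpretable in $(K,<)$, so $G$ is dp-minimal, hence in any $\aleph_1$-saturated elementary extension non-singular by \thref{fact:dpgroup}, and non-singularity is a first-order property preserved under elementary equivalence.

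Finally, since $(K,<,v)$ and $(\R\pow{G},<,\vmin)$ are henselian ordered valued fields with elementarily equivalent real closed residue fields and identical value groups, the Ax--Kochen--Ershov principle recalled in \Autoref{sec:prelim} yields $(K,<,v) \equiv (\R\pow{G},<,\vmin)$, and forgetting the valuation predicate gives $(K,<) \equiv (\R\pow{G},<)$. The main obstacle is the forward step of producing the henselian valuation with real closed residue field; this is the content that requires Johnson's deep classification rather than any soft model-theoretic manipulation, and in \cite{jahnke} this is obtained by careful adaptation of Johnson's arguments to the ordered setting.
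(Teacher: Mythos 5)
This statement is imported verbatim as a \emph{Fact} from \cite[Theorem~6.2]{jahnke}; the paper offers no proof of it, so there is nothing internal to compare your argument against --- the only benchmark is the cited source, whose actual proof differs from your sketch in a few places worth flagging.

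For the backward direction, the step you label ``an Ax--Kochen--Ershov style transfer for dp-rank'' is not a consequence of the Ax--Kochen--Ershov principle recalled in \Autoref{sec:prelim} (which transfers only elementary equivalence), and \cite{farre} contains nothing about dp-rank. The transfer of dp-minimality from residue field and value group to an equicharacteristic-zero henselian (ordered) valued field is precisely the substantive theorem of \cite{jahnke}, so as written your sketch essentially invokes the result it is meant to establish; it is fine to cite that transfer, but it should be named as the main input rather than presented as an AKE-style formality. For the forward direction there are two genuine gaps. First, ``the reduct of $(K,<,v)$ to the value group sort is interpretable in $(K,<)$'' presupposes that $v$ is definable in $(K,<)$, which is not automatic; one has to pass through the Shelah expansion (as in \thref{lem:dpminresidue}, via \cite[Theorem~A]{jahnke2}) to get that $(K,<,v)$ is dp-minimal before interpreting $vK$. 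Second, extracting from Johnson's classification a henselian valuation \emph{on $K$ itself} with \emph{real closed} residue field requires ruling out the algebraically closed and mixed/positive characteristic local cases via orderability and then arguing convexity; the proof in \cite{jahnke} instead proceeds directly, showing that the convex hull of $\Q$ in a dp-minimal ordered field is a henselian valuation ring whose (archimedean, dp-minimal) residue field is real closed. Note also a circularity hazard: in this paper the implication ``archimedean dp-minimal $\Rightarrow$ real closed'' (\thref{prop:dparch}) is \emph{derived from} this Fact, so it cannot be used in a proof of it without an independent argument.
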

	
	\begin{lemma}\thlabel{lem:dpminresidue}
		Let $(K,<)$ be a dp-minimal almost real closed field with respect to some henselian valuation $v$. Then $vK$ is dp-minimal.
	\end{lemma}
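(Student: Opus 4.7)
The plan is to show that $vK$ is non-singular; dp-minimality then follows by applying \thref{fact:dpgroup} to an $\aleph_1$-saturated elementary extension of $vK$ and transferring along elementary equivalence. The setup reduces matters to a Hahn series field, to which \thref{fact:dpfield} applies.

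First, since $Kv$ is real closed, $(Kv,<)\equiv(\R,<)$, so the Ax--Kochen--Ershov principle applied to the henselian ordered valued fields $(K,<,v)$ and $(\R\pow{vK},<,\vmin)$ gives $(K,<,v)\equiv(\R\pow{vK},<,\vmin)$. Taking $\Lor$-reducts, $(\R\pow{vK},<)\equiv(K,<)$, so $(\R\pow{vK},<)$ is dp-minimal. By \thref{fact:dpfield} applied to this ordered field, there exists a non-singular ordered abelian group $G$ such that $(\R\pow{vK},<)\equiv(\R\pow{G},<)$.

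Next, I would bound $|(\R\pow{G})^\times/((\R\pow{G})^\times)^p|$ for each prime $p$. Because $\Char(\R)=0$, Hensel's lemma applied to the polynomials $X^p-u$ with $u\in 1+\MM_{\vmin}$ shows that $1+\MM_{\vmin}$ is $p$-divisible, so $\OO_{\vmin}^\times/(\OO_{\vmin}^\times)^p\cong\R^\times/(\R^\times)^p$, which has size at most $2$. Applying $-\otimes_{\Z}\Z/p\Z$ (which is right exact) to the short exact sequence $1\to\OO_{\vmin}^\times\to(\R\pow{G})^\times\to G\to 0$ gives $|(\R\pow{G})^\times/((\R\pow{G})^\times)^p|\leq 2\cdot|G/pG|$, which is finite since $G$ is non-singular. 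The statement ``$|K^\times/(K^\times)^p|\leq n$'' is expressible by a single $\Lor$-sentence, so by the elementary equivalence established above this bound transfers to $\R\pow{vK}$.

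To conclude, the valuation $\vmin\colon(\R\pow{vK})^\times\twoheadrightarrow vK$ sends $p$-th powers into $p(vK)$, hence induces a surjection of the finite group $(\R\pow{vK})^\times/((\R\pow{vK})^\times)^p$ onto $vK/p(vK)$. Thus $vK/p(vK)$ is finite for every prime $p$, so $vK$ is non-singular. As non-singularity is elementary (for each $p$ and $n$, ``$|G/pG|\leq n$'' is first-order in $\Log$), any $\aleph_1$-saturated elementary extension $H\succeq vK$ is non-singular, hence dp-minimal by \thref{fact:dpgroup}. Since dp-minimality is a property of the theory, $vK$ itself is dp-minimal.

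The main obstacle is the henselian-valued-field computation in the second paragraph. It is routine, but it relies on Hensel's lemma for principal units together with the right exactness of $-\otimes\Z/p\Z$ applied to the unit sequence, neither of which is stated explicitly in the excerpt. One could avoid it if one knew that the finer equivalence $(\R\pow{vK},<,\vmin)\equiv(\R\pow{G},<,\vmin)$ held, for then $vK\equiv G$ directly; but this stronger equivalence is not available since $\vmin$ need not be $\Lor$-definable, so the $p$-th power index is the natural $\Lor$-invariant to transport.
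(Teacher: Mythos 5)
Your argument is correct, but it takes a genuinely different route from the paper. The paper's proof stays with $(K,<)$ itself: since $Kv$ is real closed (hence not separably closed), a theorem of Jahnke shows $v$ is definable in the Shelah expansion $(K,<)^{\mathrm{Sh}}$, which is again dp-minimal, so the reduct $(K,v)$ is a dp-minimal valued field; then a transfer result of Jahnke--Simon--Walsberg (finiteness of $Kv^\times/(Kv^\times)^n$ for all $n$ implies the value group of a dp-minimal valued field is dp-minimal) finishes the proof. You instead bypass the Shelah expansion entirely: you move to the Hahn field $\R\pow{vK}$ by Ax--Kochen--Ershov, invoke \thref{fact:dpfield} to produce a non-singular $G$ with $(\R\pow{vK},<)\equiv(\R\pow{G},<)$, compute $\bigl|\R\pow{G}^\times/(\R\pow{G}^\times)^p\bigr|\leq 2\,|G/pG|$ via Hensel's lemma for principal units and the unit exact sequence, transport this finite bound along the $\Lor$-equivalence (it is a first-order sentence for each fixed bound), and push it down to $vK/p\,vK$ through the surjection induced by $\vmin$; non-singularity of $vK$ then yields dp-minimality via \thref{fact:dpgroup} applied to an $\aleph_1$-saturated elementary extension. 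Your route trades the external definability-in-$(K,<)^{\mathrm{Sh}}$ machinery for an explicit (but elementary and correct) valuation-theoretic computation, and it has the mild aesthetic advantage of using only facts already quoted in the paper (\thref{fact:dpgroup}, \thref{fact:dpfield}, AKE); the paper's proof is shorter granted its two external references, and its first half is reused verbatim in \thref{lem:strongnipfieldresidue}, where your Hahn-field detour would not apply since no analogue of \thref{fact:dpfield} is available for strongly NIP ordered fields.
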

	
	\begin{proof}
		Since $Kv$ is an ordered field, it is not separably closed. Thus, by \cite[Theorem~A]{jahnke2}, $v$ is definable in the Shelah expansion $(K,<)^{\mathrm{Sh}}$ (cf. \cite[Section~2]{jahnke2}) of $(K,<)$. By \cite[Observation~3.8]{onshuus}, also $(K,<)^{\mathrm{Sh}}$ is dp-minimal, whence the reduct $(K,v)$ is dp-minimal. Since $Kv$ is real closed, $Kv^\times/\linebreak (Kv^\times)^n$ is finite for all $n \in \N$. Hence, by \cite[Proposition~6.1]{jahnke} also $vK$ is dp-minimal.
	\end{proof}
	
	\begin{proposition}\thlabel{prop:dparc}\thlabel{prop:dpminarc}
		Let $(K,<)$ be an ordered field. Then $(K,<)$ is dp-minimal if and only if it is almost real closed with respect to a dp-minimal ordered abelian group.
	\end{proposition}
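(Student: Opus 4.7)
The plan is to prove the two directions separately, using Fact~\ref{fact:dpfield} as the main input on each side, together with Lemma~\ref{lem:dpminresidue} for the forward direction and Fact~\ref{fact:dpgroup} plus the Ax--Kochen--Ershov principle for the backward direction.

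For the forward direction, I would assume $(K,<)$ is dp-minimal. Fact~\ref{fact:dpfield} then gives a non-singular ordered abelian group $G$ with $(K,<)\equiv(\R\pow{G},<)$. Since the Hahn field $\R\pow{G}$ is almost real closed (witnessed by $\vmin$, whose residue field is $\R$), Proposition~\ref{prop:arcelem} transfers almost real closedness to $(K,<)$. Thus $K$ admits a henselian valuation $v$ with $Kv$ real closed, and Lemma~\ref{lem:dpminresidue} immediately yields that $vK$ is dp-minimal, exhibiting $(K,<)$ as almost real closed with respect to a dp-minimal value group.

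For the backward direction, assume $(K,<)$ is almost real closed via a henselian valuation $v$ with $Kv$ real closed and $G := vK$ dp-minimal. Since dp-minimality is a property of the complete theory, any $\aleph_1$-saturated elementary extension $G^{*}\succeq G$ is still dp-minimal, and Fact~\ref{fact:dpgroup} then upgrades this to non-singularity of $G^{*}$. Two applications of the Ax--Kochen--Ershov principle do the rest: completeness of $\Trcf$ gives $Kv\equiv \R$, so $(K,<,v)\equiv(\R\pow{G},<,\vmin)$; and $G\equiv G^{*}$ gives $(\R\pow{G},<,\vmin)\equiv(\R\pow{G^{*}},<,\vmin)$. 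Reducing to $\Lor$ yields $(K,<)\equiv(\R\pow{G^{*}},<)$ with $G^{*}$ non-singular, and one final appeal to Fact~\ref{fact:dpfield} concludes that $(K,<)$ is dp-minimal.

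The only subtle step is the saturation argument in the backward direction: Fact~\ref{fact:dpgroup} characterises dp-minimality through non-singularity only under the hypothesis of $\aleph_1$-saturation, so one cannot directly conclude non-singularity of $vK$ itself. Passing to a saturated elementary extension is the natural remedy, and it is harmless because Fact~\ref{fact:dpfield} only depends on the theory of the value group. Everything else is bookkeeping with already-established results.
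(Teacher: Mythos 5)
Your proposal is correct and follows essentially the same route as the paper: both directions hinge on Fact~\thref{fact:dpfield}, with the forward direction using \thref{prop:arcelem} and \thref{lem:dpminresidue}, and the backward direction passing to an $\aleph_1$-saturated elementary extension of the value group to invoke \thref{fact:dpgroup} before applying the Ax--Kochen--Ershov principle. Your explicit two-step AKE argument and the remark on why the saturation detour is needed are just slightly more detailed versions of what the paper does.
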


	\begin{proof}
		Suppose that $(K,<)$ is almost real closed with respect to a dp-minimal ordered abelian group $G$. By \thref{fact:dpgroup}, an $\aleph_1$-saturated elementary extension $G_1$ of $G$ is non-singular. By the Ax--Kochen--Ershov Principle, we have $(K,<) \equiv (\R\pow{G_1},<)$, which is dp-minimal by \thref{fact:dpfield}. Hence, $(K,<)$ is dp-minimal.
		
		Conversely, suppose that $(K,<)$ is dp-minimal. By \thref{fact:dpfield}, $(K,<) \equiv (\R\pow{G},<)$ for some non-singular ordered abelian group $G$. Since $(\R\pow{G},\linebreak<)$ is almost real closed, by \thref{prop:arcelem} also $(K,<)$ is almost real closed with respect to some henselian valuation $v$. By \thref{lem:dpminresidue}, also $vK$ is dp-minimal, as required.
	\end{proof}

	As a result, we obtain a characterisation of dp-minimal archimedean ordered fields.
	
	\begin{corollary}\thlabel{prop:dparch}
		Let $(K,<)$ be a dp-minimal archimedean field. Then $K$ is real closed.
	\end{corollary}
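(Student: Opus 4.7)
The plan is to combine the dp-minimality characterisation just proved (\thref{prop:dpminarc}) with the observation that an archimedean ordered field admits no non-trivial convex valuation. Concretely, by \thref{prop:dpminarc}, $(K,<)$ is almost real closed with respect to some henselian valuation $v$ and some dp-minimal ordered abelian group $G = vK$. By \thref{fact:hensconv}, $v$ is convex on $(K,<)$.

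Next I would show that any convex valuation on an archimedean ordered field is trivial. Since $1 \in \OO_v$ and $\OO_v$ is convex, we have $\Z \subseteq \OO_v$. Given any $x \in K$, archimedeanness yields $n \in \N$ with $|x| < n$; since $n \in \OO_v$ and $\OO_v$ is convex, this forces $x \in \OO_v$. Hence $\OO_v = K$, so $v$ is trivial.

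Combining these two steps, $Kv = K$. By \thref{def:arc}, the residue field $Kv$ is real closed, and therefore so is $K$. The whole argument is short, and the only conceptual input beyond the definitions is \thref{prop:dpminarc} together with \thref{fact:hensconv}; there is no real obstacle, just the remark that archimedeanness kills any non-trivial convex valuation.
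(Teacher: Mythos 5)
Your proof is correct and follows the same route as the paper: apply \thref{prop:dpminarc} and then observe that an archimedean almost real closed field must be real closed, since archimedeanness forces every convex (hence every henselian) valuation to be trivial. The paper states this last observation without proof; your explicit verification that $\OO_v = K$ is exactly the intended justification.
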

	
	\begin{proof}
		The only archimedean almost real closed fields are the archimedean real closed fields. Thus, by \thref{prop:dpminarc}, any archimedean dp-minimal ordered field is real closed.
	\end{proof}

	
	 We now turn to strongly NIP almost real closed fields. The first result will be used in the proof of \thref{thm:arcf} for the classification of strongly NIP almost real closed fields.
	 
	 \begin{proposition}\thlabel{lem:strongnipfieldresidue}
	 	Let $(K,<)$ be a strongly NIP ordered field and let $v$ be a henselian valuation on $K$. Then also $(Kv,<)$ and $vK$ are strongly NIP.
	 \end{proposition}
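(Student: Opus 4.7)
The plan is to pass to the Shelah expansion, use a definability result of Jahnke--Simon to access $v$, and then read off $(Kv,<)$ and $vK$ as interpreted structures.

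First, since $(K,<)$ is an ordered field, the residue field $(Kv,<)$ is again an ordered field, and in particular not separably closed. Hence by \cite[Theorem~A]{jahnke2} the valuation $v$ is definable in the Shelah expansion $(K,<)^{\mathrm{Sh}}$. The Shelah expansion of a strongly NIP structure is again strongly NIP (this is the strongly NIP analogue of the fact used for dp-minimality via \cite[Observation~3.8]{onshuus} already invoked in \thref{lem:dpminresidue}; one checks that adding predicates for externally definable sets does not increase the dp-rank of the theory), so $(K,<)^{\mathrm{Sh}}$ is strongly NIP. Since $\OO_v$ is definable in $(K,<)^{\mathrm{Sh}}$, the structure $(K,<,v)$ is (definitionally equivalent to) a reduct of $(K,<)^{\mathrm{Sh}}$ and is therefore strongly NIP.

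Next I would produce $(Kv,<)$ and $vK$ as interpretations inside $(K,<,v)$. For the residue field, $\OO_v$ is definable, $\MM_v = \{x \in \OO_v \mid \forall y\, (\OO_v(xy))\}$ is definable, the quotient $\OO_v/\MM_v$ is an interpretable ring, and the induced order is given by the definable relation recalled in \Autoref{sec:prelim}. For the value group, $vK$ is interpreted as $K^\times/\OO_v^\times$ with the ordering induced by the valuation ring. Both interpretations are without parameters once $v$ is given, so \cite[Observation~3.8]{onshuus}-type preservation (interpretations do not increase the dp-rank of a theory) yields that $(Kv,<)$ and $vK$ are strongly NIP.

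The only non-routine step is the claim that the Shelah expansion preserves strong NIP. The main technical point is to verify that externally definable sets do not destroy the finiteness of the dp-rank of the whole theory; this is the strong-NIP refinement of Shelah's NIP preservation for honest definitions, and it is the analogue of the dp-minimal version quoted in \thref{lem:dpminresidue}. Once this is in place, the rest is a direct application of the definability of henselian valuations in the Shelah expansion together with the standard fact that interpretations preserve strong NIP.
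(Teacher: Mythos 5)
Your proof follows essentially the same route as the paper's: definability of $v$ in the Shelah expansion via \cite[Theorem~A]{jahnke2}, preservation of strong NIP under the Shelah expansion (the paper cites \cite[Observation~3.8]{onshuus} for exactly this), and then preservation under interpretation (the paper cites \cite[Observation~1.4~(2)]{shelah}) to read off $(Kv,<)$ and $vK$. One small slip: your formula $\MM_v = \{x \in \OO_v \mid \forall y\, \OO_v(xy)\}$ defines only $\{0\}$ when $v$ is non-trivial; the maximal ideal is of course still definable from $\OO_v$ (e.g.\ as $\{x \mid x = 0 \vee x^{-1} \notin \OO_v\}$), so this does not affect the argument.
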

	 
	 \begin{proof}
	 	Arguing as in the proof of \thref{lem:dpminresidue}, we obtain that $v$ is definable in $(K,<)^{\mathrm{Sh}}$. Now $(K,<)^{\mathrm{Sh}}$ is also strongly NIP (cf. \cite[Observation~3.8]{onshuus}), whence $(K,<,v)$ is strongly NIP. By \cite[Observation~1.4~(2)]{shelah}, any structure which is first-order intepretable in $(K,<,v)$ is strongly NIP. Hence, also $(Kv,<)$ and $vK$ are strongly NIP.
	 \end{proof}

	 \thref{conj2impconj1} and \thref{prop:conj1toconj2} below are used in the proof of \thref{thm:main}.
	 For the first result, we adapt \cite[Lemma~1.9]{halevi} to the context of ordered fields.

	 \begin{proposition}\thlabel{conj2impconj1}
	 	Assume that any strongly NIP ordered field is either real closed or admits a non-trivial henselian valuation. 	 Let $(K,<)$ be a strongly NIP ordered field. Then $(K,<)$ is almost real closed with respect to the canonical valuation, i.e. the finest henselian valuation on $K$.
	 \end{proposition}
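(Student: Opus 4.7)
The plan is to adapt \cite[Lemma~1.9]{halevi} to the ordered setting, using \thref{lem:strongnipfieldresidue} to propagate strong NIP from $K$ to the residue field of $v_{\can}$, and then to apply the hypothesis a second time at the residue level to force the residue field to be real closed.

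If $K$ is real closed, then every convex (equivalently, henselian) valuation on $K$ has real closed residue field, so $(K,<)$ is trivially almost real closed with respect to $v_{\can}$. Assume henceforth that $K$ is not real closed. By hypothesis, $K$ admits a non-trivial henselian valuation, so the finest henselian valuation $v_{\can}$ --- whose existence as the supremum of the linearly ordered (under $\leq$) family of convex henselian valuations on $(K,<)$ is standard --- is itself non-trivial. Set $k = Kv_{\can}$; by \thref{lem:strongnipfieldresidue}, $(k,<)$ is strongly NIP. Now apply the hypothesis to $(k,<)$: either $k$ is real closed --- which is precisely what is required --- or $k$ admits a non-trivial henselian valuation $w$.

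In the latter case, the composition $w \circ v_{\can}$, whose valuation ring is the preimage of $\OO_w \subseteq k$ in $\OO_{v_{\can}}$ under the residue map, is a henselian valuation on $K$ (composition of henselian valuations is henselian), and is strictly finer than $v_{\can}$ because $w$ is non-trivial. This contradicts the maximality of $v_{\can}$ among henselian valuations on $K$. We conclude that $k = Kv_{\can}$ is real closed, so $(K,<)$ is almost real closed with respect to $v_{\can}$.

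There is no serious technical obstacle: the argument is a one-step bootstrap, and the only auxiliary ingredients --- existence of the finest henselian valuation on an ordered field and preservation of henselianity under composition --- are classical valuation-theoretic facts that can be cited from \cite{engler}. The real content has already been carried out in \thref{lem:strongnipfieldresidue}, which ensures that the residue field $(Kv_{\can},<)$ is again strongly NIP and therefore falls within the scope of the hypothesis.
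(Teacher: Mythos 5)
Your proof is correct and follows essentially the same route as the paper's: handle the real closed case separately, use \thref{lem:strongnipfieldresidue} to see that $(Kv_{\can},<)$ is strongly NIP, and then rule out a non-trivial henselian valuation on the residue field via composition with $v_{\can}$ and maximality, so that the hypothesis forces $Kv_{\can}$ to be real closed. The only difference is that you spell out the composition-of-valuations step and the existence of the finest henselian valuation, which the paper leaves implicit.
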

	 
	 \begin{proof}
	 	Let $(K,<)$ be a strongly NIP ordered field. If $K$ is real closed, we can take the natural valuation. Otherwise, by assumption, the set of non-trivial henselian valuations on $K$ is non-empty. Let $v$ be the canonical valuation on $K$.
	 	By \thref{lem:strongnipfieldresidue}, $(Kv,<)$ is strongly NIP. Note that $Kv$ cannot admit a non-trivial henselian valuation, as otherwise this would induce a non-trivial henselian valuation on $K$ finer than $v$. Hence, by assumption, $Kv$ must be real closed.
	 \end{proof}
	 
	 The next result is obtained from a slight adjustment of the proof of \cite[Fact~1.8]{halevi}.
	
	\begin{proposition}\thlabel{prop:conj1toconj2}
		Let  $(K,<)$ be a strongly NIP ordered field which not real closed but is almost real closed with respect to a henselian valuation $v$. Then there exists a non-trivial $\Lr$-definable henselian coarsening of $v$.
	\end{proposition}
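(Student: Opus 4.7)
The plan is to construct a non-trivial $\Lr$-definable henselian coarsening of $v$ by applying \thref{fact:thm44}, following the template of \cite[Fact~1.8]{halevi}.

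Since $(K,<)$ is almost real closed with respect to $v$, we have $v \in V(K)$ and hence $v_0 \leq v \leq v_1$. Set $G := v_1K$, and let $H_v \subseteq G$ be the convex subgroup associated with the coarsening $v$ of $v_1$, so that $G/H_v \cong vK$. By \thref{lem:strongnipfieldresidue}, $G$ is a strongly NIP ordered abelian group, and since $K$ is not real closed while $v_1$ is henselian with real closed residue field, the Ax--Kochen--Ershov principle forces $G$ to be non-divisible; consequently so is the quotient $G/H_v$.

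The next step is to invoke the structure theory of strongly NIP ordered abelian groups used in \cite[Fact~1.8]{halevi}, applied to the strongly NIP, non-divisible group $G/H_v \cong vK$. This yields a prime $p$ and a proper $\Log$-definable convex subgroup $\overline H$ of $G/H_v$ whose quotient has no non-trivial $p$-divisible convex subgroup. Pulling $\overline H$ back along the surjection $G \twoheadrightarrow G/H_v$ produces a $\Log$-definable convex subgroup $H \subseteq G$ with $H_v \subseteq H \subsetneq G$, such that $G/H$ has no non-trivial $p$-divisible convex subgroup.

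Now I would let $w$ be the henselian valuation on $K$ corresponding to the convex subgroup $H$ of $v_1K$. Then $w$ is non-trivial, is a coarsening of $v$ (because $H \supseteq H_v$), and satisfies both conditions of \thref{fact:thm44}: the value group $wK \cong G/H$ is $\Log$-definable in $v_1K$ via the defining formula for $H$, and $w \leq v_p$ because $G/H$ has no non-trivial $p$-divisible convex subgroup. Hence $w$ is $\Lr$-definable, as desired. The main obstacle is the extraction of the proper $\Log$-definable convex subgroup $\overline H$ from the strongly NIP, non-divisible ordered abelian group $G/H_v$, which is the technical core of the argument; the \emph{slight adjustment} to \cite[Fact~1.8]{halevi} consists in running the argument on the quotient $G/H_v$ rather than on $G$ itself, so that the resulting coarsening of $v_1$ is automatically a coarsening of the prescribed $v$.
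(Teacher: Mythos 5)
Your overall strategy --- descend to the value group, extract a suitable convex subgroup, and invoke \thref{fact:thm44} --- is in the spirit of how such coarsening results are proved, but as written it has one minor and one substantive defect. The minor one: from $v_1K$ being non-divisible you cannot conclude that the quotient $v_1K/H_v\cong vK$ is non-divisible (e.g.\ $\Q\oplus\Z$ is non-divisible but its quotient by the convex subgroup $\{0\}\oplus\Z$ is $\Q$). The conclusion you want is still true, but it should be obtained by applying Ax--Kochen--Ershov to $v$ itself: $Kv$ is real closed and $K$ is not, so $vK$ cannot be divisible. The substantive defect is the pullback step. \thref{fact:thm44} requires the convex subgroup $H\subseteq v_1K$ corresponding to $w$ to be $\Log$-definable \emph{in the ordered abelian group $v_1K$}. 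You produce $\overline{H}$ as an $\Log$-definable convex subgroup of the quotient $vK\cong G/H_v$ and assert that its preimage $H$ is $\Log$-definable in $G$; but $H_v$ is not assumed definable in $G$, so the quotient need not be interpretable in $G$ and definability has no reason to transfer along the projection. Since the extraction of $\overline{H}$ itself is also left as an unproven appeal to ``the structure theory used in \cite[Fact~1.8]{halevi}'', the technical core of your argument is missing on both counts.

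The paper's proof sidesteps all of this: by \thref{lem:strongnipfieldresidue} the group $vK$ is strongly NIP and, as above, non-divisible, and \cite[Proposition~5.5]{halevi2} states outright that any henselian valuation with non-divisible value group on a strongly NIP field admits a non-trivial $\Lr$-definable henselian coarsening; applying this to $(K,v)$ finishes the proof with no descent to $v_1$ and no use of \thref{fact:thm44}. If you wish to keep your route, the work you still owe is precisely step (b) of the following: (a) fix a prime $p$ with $vK\neq p\,vK$; (b) show that the preimage in $v_1K$ of the largest $p$-divisible convex subgroup of $vK$ is $\Log$-definable in $v_1K$; (c) verify $w\leq v_p$.
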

	
	\begin{proof}
	  By \thref{lem:strongnipfieldresidue}, $vK=G$ is strongly NIP. Since $K$ is not real closed, $G$ is non-divisible. By \cite[Proposition~5.5]{halevi2}, any henselian valuation with non-divisible value group on a strongly NIP field has a non-trivial $\Lr$-definable henselian coarsening. Hence, there is a non-trivial $\Lr$-definable henselian coarsening $u$ of $v$.
	\end{proof}

	\thref{prop:conj1toconj2} can be strengthened in the case that $K$ satisfies the hypothesis of \thref{lem:defvalnew}. In this case, $v$ itself is $\Lor$-definable. In the following we will argue that there are examples of strongly NIP ordered Hahn fields which do and such which do not satisfy the conditions of \thref{thm:defval}. We will use the following fact, which is established in \cite[p.~2]{halevi}.
	
	\begin{fact}\thlabel{fact:halevi}
		Let $K$ be a perfect field. Suppose that there exists a henselian valuation $v$ on $K$ such that the following hold:
		\begin{enumerate}[wide, labelwidth=!, labelindent=6pt]
			\item $v$ is defectless.
			\item The residue field $Kv$ is either an algebraically closed field of characteristic $p$ or elementarily equivalent to a local field of characteristic $0$.
			\item The ordered value group $vK$ is strongly NIP. 
			\item If $\Char(Kv) = p \neq \Char(K)$, then $[-v(p),v(p)] \subseteq pvK$.
		\end{enumerate}
		Then $K$ is strongly NIP.
	\end{fact}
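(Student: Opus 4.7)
This statement is recorded as a \emph{fact} and is attributed directly to \cite{halevi}, so my ``proof'' proposal is essentially to invoke the existing literature and, if needed, to reconstruct the argument from standard ingredients that are by now well documented. Concretely, the plan is to appeal to an Ax--Kochen--Ershov transfer principle for strongly NIP henselian valued fields, and then argue that strongly NIP of the pure field $K$ is inherited from strongly NIP of the enriched structure $(K,v)$ because a reduct of a strongly NIP structure is strongly NIP.

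In more detail, I would proceed as follows. First, using the hypotheses (1), (2) and (4), I would place $(K,v)$ inside a class of henselian valued fields on which the Ax--Kochen--Ershov principle is known to hold in the language $\Lvf$: hypothesis (1) (defectlessness) together with the mixed/positive residue characteristic condition (4) exclude the pathological wild cases, while hypothesis (2) ensures that the residue field is of one of the two flavours (algebraically closed of characteristic $p$, or elementary equivalent to a $p$-adically closed / local field of characteristic $0$) for which NIP transfer has been established in the strongly NIP literature. Second, in the algebraically closed residue case, $Kv$ is itself strongly NIP (in fact stable); in the local field case, the complete theory of a finite extension of $\Q_p$ is strongly NIP by classical results on $p$-adically closed fields. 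Combined with hypothesis (3), this gives that both the residue field and the value group are strongly NIP.

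Third, the relevant AKE-type theorem for strongly NIP then transfers strong NIP from the residue field and the value group to the valued field $(K,v)$; this is precisely the content of the machinery developed and summarised in \cite{halevi,halevi2}, and is the main technical input. Finally, the pure field $K$ is a reduct of the $\Lvf$-structure $(K,v)$, and any reduct of a strongly NIP structure is strongly NIP (see \cite[Claim~3.14, 3)]{shelah2}); hence $K$ is strongly NIP.

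The main obstacle is of course the AKE step in positive or mixed residue characteristic: one needs a transfer principle that is sensitive to defect, and this is exactly why assumptions (1) and (4) appear. Rather than redoing this delicate argument, the sensible move is to cite \cite{halevi} directly, since they have already extracted precisely the packaging of hypotheses needed to make the transfer go through; this is the reason the result is stated here as a \emph{Fact} rather than a theorem.
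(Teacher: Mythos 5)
The paper gives no proof of this statement: it is imported verbatim as a known result, with the remark that it ``is established in \cite[p.~2]{halevi}'', which is exactly the move you make. Your additional sketch of the underlying AKE-style transfer argument is consistent with how that reference proceeds, so your proposal matches the paper's approach.
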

	
	The following lemma will also be used in the proof of \thref{cor:classification2}. 
	
	\begin{lemma}\thlabel{lem:powstrongnip}
		Let $G$ be a strongly NIP ordered abelian group. Then the ordered Hahn field $(\R\pow{G}, <)$ is strongly NIP.
	\end{lemma}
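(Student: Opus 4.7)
The plan is to apply \thref{fact:halevi} to $K = \R\pow{G}$ equipped with the natural valuation $v = \vmin$, and verify all four hypotheses in turn. Since these verifications are largely standard for Hahn fields, the proof should be short.

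First, I would observe that $\R\pow{G}$ is a field of characteristic $0$, hence perfect, and that $\vmin$ is a henselian valuation on $\R\pow{G}$ (as noted in \Autoref{sec:prelim}). For hypothesis (1), I would use the well-known fact that Hahn fields are maximally valued with respect to $\vmin$, and maximally valued fields are defectless. For hypothesis (2), the residue field is $\R\pow{G}\vmin = \R$, which is a local field of characteristic $0$, so this holds directly. Hypothesis (3) is exactly the assumption that $G$ is strongly NIP, since $\vmin(\R\pow{G}) = G$. Finally, hypothesis (4) is vacuous because $\Char(\R\pow{G}\vmin) = 0 = \Char(\R\pow{G})$.

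Having verified the four hypotheses, \thref{fact:halevi} yields that $\R\pow{G}$ is strongly NIP as a pure field. To conclude that $(\R\pow{G},<)$ is strongly NIP as an ordered field, I would note that the ordering on $\R\pow{G}$ is $\Lvf$-definable with $v = \vmin$, since $\R$ is real closed and $G$ is $2$-divisible --- wait, $G$ is not assumed divisible. Instead, I would use \thref{lem:orderingdef2}: the conditions there require $Kv$ to be $2$-euclidean and $vK$ to be $2$-divisible. Here $Kv = \R$ is $2$-euclidean, but $G$ need not be $2$-divisible in general. So this route does not directly work; the ordering on $\R\pow{G}$ is instead $\Lr$-definable via the formula $x > 0 \Leftrightarrow \exists y\ y^2 = x$ in the positive cone of the residue field combined with the sign of the leading coefficient, but this requires more care.

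The cleanest way around this obstacle is to note that $(\R\pow{G},<)$ is an almost real closed field with respect to $\vmin$, and by \thref{fact:halevi} the underlying field $\R\pow{G}$ is strongly NIP. Then the ordering is uniquely determined (up to the choice of sign on a group complement via Baer--Krull) and, more importantly, is definable from $\vmin$ together with the real closed residue field: specifically, $x > 0$ if and only if the leading coefficient of $x$ is positive in $\R$, which is an $\Lvf$-definable condition since $\R = \R\pow{G}\vmin$ is real closed and $\R^{>0} = (\R^\times)^2$. Thus $(\R\pow{G},<,\vmin) \equiv (\R\pow{G},\vmin)$ after adding a predicate for the ordering as a reduct of $(\R\pow{G},\vmin)$, so $(\R\pow{G},<)$ is a reduct of the strongly NIP structure $(\R\pow{G},\vmin)$, and hence is itself strongly NIP by the preservation of strong NIP under reducts. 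The main obstacle is precisely this last step: ensuring that the ordering on $\R\pow{G}$ is interpretable in the strongly NIP structure $(\R\pow{G},\vmin)$, which I would handle by the explicit formula for positivity in terms of the residue of a suitable rescaling.
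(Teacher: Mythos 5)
Your verification of hypotheses (1)--(4) of \thref{fact:halevi} is correct and coincides with the paper's first step (the paper justifies defectlessness by residue characteristic $0$ rather than by maximal valuedness, but both work), and you rightly observe that \thref{lem:orderingdef2} is unavailable because $G$ need not be $2$-divisible. The gap is in your final step: the claim that ``$x>0$ iff the leading coefficient of $x$ is positive'' is an $\Lvf$-definable condition in $(\R\pow{G},\vmin)$. The leading-coefficient map is exactly an angular component map $\ac(s)=s(\vmin(s))$, and this is genuinely extra structure, not something definable from the valuation alone. Your proposed workaround --- ``positivity in terms of the residue of a suitable rescaling'' --- is circular: to turn $x$ into a unit you must divide by an element of the same value, and the sign of the resulting residue depends on the sign of the divisor you chose, so you need a coherent choice of positive representatives of all values, which is precisely an angular component (equivalently a positive section of $G$), i.e.\ the very thing whose definability is at issue. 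Indeed, when $G\neq 2G$ the Baer--Krull theorem yields several orderings compatible with $\vmin$, permuted by the valuation-preserving field automorphisms $\sum s_gt^g\mapsto\sum\chi(g)s_gt^g$ for homomorphisms $\chi\colon G\to\{\pm1\}$, so no parameter-free $\Lvf$-formula can single one out; and since $G/2G$ may be infinite for strongly NIP $G$ (e.g.\ the Hahn sum of countably many copies of the group $B$ of \thref{ex:snipdensenotdense}, which is strongly NIP by \thref{fact:snipgp}), finitely many parameters do not obviously rescue the argument either.

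The paper closes exactly this gap by passing to the expansion $(K,\vmin,\ac)$ by the angular component map and invoking the argument of Halevi--Hasson--Jahnke to conclude that this ac-valued field is strongly NIP. Note that this is an \emph{expansion} of $(K,\vmin)$, so it does not follow from \thref{fact:halevi} together with preservation of strong NIP under reducts; it requires the Ax--Kochen--Ershov-type transfer for ac-valued fields. Once $\ac$ is part of the structure, the ordering is definable by $a\geq 0\leftrightarrow\exists y\ y^2=\ac(a)$, and $(\R\pow{G},<)$ is a reduct of the strongly NIP structure $(K,\vmin,\ac)$. Replacing your last paragraph by this step makes the proof go through.
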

	
	\begin{proof}
		If $K = \R\pow{G}$ is real closed, then we are done.
		Otherwise, let $v$ be the natural valuation on $K$. We will first verify that $v$ satisfies conditions (1)--(4) of \thref{fact:halevi}. Condition (4) is trivially satisfied; (2) and (3) hold by assumption. The valuation $v$ is defectless if every finite extension $(L,v)$ over $(K,v)$ is defectless. Since this always holds in the characteristic $0$ case, (1) is satisfied.
		
		Now $K$ is $\ac$-valued with angular component map $\ac : K \to \R$  given by $\ac(s) = s(v(s))$ for $s \neq 0$ and $\ac(0)=0$ (see \cite[Section~5.4~f.]{dries}). Following the argument of \cite[p.~2]{halevi}, we obtain that $(K,v,\ac)$ is a strongly NIP $\ac$-valued field. Since $\R$ is closed under square roots for positive element, for any $a \in K$ we have $a \geq 0$ if and only if the following holds in $K$:
		\[\exists y \ y^2 = \ac(a).\]
		Hence, the order relation $<$ is definable in $(K,v,\ac)$. We obtain that $(K,<)$ is strongly NIP.
	\end{proof}

	We can now give the examples announced above. By \thref{lem:powstrongnip}, it suffices to find strongly NIP ordered abelian groups $G$ which satisfy conditions (\ref{lem:defvalnew:1}) or (\ref{lem:defvalnew:2}) of \thref{lem:defvalnew} and such which do not satisfy either of the conditions.
	An explicit dp-minimal and thus strongly NIP discretely ordered abelian group is given in \thref{ex:dpmin}. \thref{ex:snipdensenotdense}~(\ref{ex:snipdensenotdense:1}) provides a strongly NIP ordered abelian group $G$ which satisfies condition (\ref{lem:defvalnew:2}) of \thref{lem:defvalnew}. \thref{ex:snipdensenotdense}~(\ref{ex:snipdensenotdense:2}) presents a strongly NIP ordered abelian group $G$ which satisfies neither condition (\ref{lem:defvalnew:1}) nor (\ref{lem:defvalnew:2}) of \thref{lem:defvalnew}. To this end, we use the characterisation of strongly NIP ordered abelian groups from \cite[Theorem~1]{halevi2}.
	
	\begin{fact}\thlabel{fact:snipgp}
		Let $G$ be an ordered abelian group. Then the following are equivalent:
		\begin{enumerate}[wide, labelwidth=!, labelindent=6pt]
			\item $G$ is strongly NIP.
			
			\item $G$ is elementarily equivalent to a lexicographic sum of ordered abelian groups $\bigoplus_{i \in I} G_i$, where for every prime $p$, we have
			$|\{i \in I \mid pG_i \neq G_i \}| < \infty$,
			and for any $i \in I$, we have
			$|\{p \text{ prime} \mid [G_i:pG_i]=\infty \}| < \infty$.
		\end{enumerate}
	\end{fact}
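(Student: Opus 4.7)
The plan is to prove both implications by combining a structural decomposition of ordered abelian groups into their archimedean components with a Gurevich--Schmitt-style quantifier elimination in a language that records the spines and the $p$-indices $[G:pG]$ for primes $p$. In this setup, the two finiteness conditions in (2) should correspond exactly to the dp-rank of $G$ being finite.

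For the direction $(2) \Rightarrow (1)$, I would first check that the class of ordered abelian groups satisfying the two finiteness conditions is closed under elementary equivalence, which reduces the problem to bounding the dp-rank of a concrete lexicographic sum $G = \bigoplus_{i \in I} G_i$ satisfying the conditions. For a $1$-type $p(x)$ the behaviour of $x$ is controlled by (a) its value $v(x)$ in the natural valuation and (b) for each prime $q$, the coset of $x$ in each $G_i/qG_i$. The first finiteness condition bounds, for each prime $q$, the number of components where $G_i/qG_i$ is nontrivial; the second ensures that a fixed $G_i$ contributes nontrivial data for only finitely many primes. Summing these contributions shows that only finitely many coordinates can be used to build mutually indiscernible sequences around a realization of $p$, giving $\dprk(p) < \aleph_0$.

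For the direction $(1) \Rightarrow (2)$, I would use the Hahn embedding to present $G$, up to elementary equivalence, as a lexicographic sum of archimedean components $G_i$, and then assume for contradiction that one of the two finiteness conditions fails. If some prime $p$ has $pG_i \neq G_i$ for infinitely many $i$, I would construct $\aleph_0$ mutually indiscernible sequences by picking, for each such $i$, a sequence of elements concentrated in the $i$-th coordinate that represent distinct cosets modulo $pG$; mutual indiscernibility follows because the distinct components interact with each other only through the lexicographic order, which separates them $0$-definably. Dually, if some fixed $G_i$ has $[G_i:pG_i] = \infty$ for infinitely many primes $p$, I would build independent sequences inside the single component $G_i$ by exploiting the independent behaviour of $x \mapsto x \bmod pG_i$ for different primes.

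The main obstacle will be the $(1) \Rightarrow (2)$ direction: one must verify that the combinatorial independence between components (or between different primes acting on a single component) actually lifts to model-theoretic mutual indiscernibility. This verification seems to require quantifier elimination in a Gurevich--Schmitt-type expansion of $\Log$, since only in such an expansion is the interaction between the spine data, the $p$-indices, and the archimedean components transparent enough to control types of tuples and to guarantee that the sequences we have built are mutually indiscernible in the full structure $G$.
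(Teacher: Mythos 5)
This statement is quoted in the paper as a \emph{Fact} with no proof: it is \cite[Theorem~1]{halevi2} (Halevi--Hasson), so there is no internal argument to compare yours against. Judged on its own, your sketch identifies several of the right ingredients (coset data modulo $pG$, the Gurevich--Schmitt-style relative quantifier elimination, lexicographic decompositions), but it contains a genuine gap that sits exactly where the real difficulty of the theorem lies. In the direction $(1)\Rightarrow(2)$ you propose to ``use the Hahn embedding to present $G$, up to elementary equivalence, as a lexicographic sum of archimedean components.'' This is not available: Hahn's theorem gives only a group embedding of $G$ into a Hahn \emph{product} of archimedean groups, and that embedding is neither surjective nor elementary; a general ordered abelian group need not be elementarily equivalent to any lexicographic sum of archimedean groups. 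Producing an elementarily equivalent lexicographic-sum presentation is precisely the structural content of the implication $(1)\Rightarrow(2)$ --- it is a consequence of strong dependence, not a free reduction one may perform at the outset. Once you assume it, the remaining task (refuting the two finiteness conditions by building arbitrarily deep ict-patterns from cosets modulo $pG$ in distinct components, or from distinct primes acting on one component) is comparatively routine, so the argument as written begs the main question.

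The direction $(2)\Rightarrow(1)$ is also only a heuristic as it stands. The claim that the two finiteness conditions let you ``sum the contributions'' and bound the number of coordinates relevant to a $1$-type presupposes that every formula's trace on a realization is controlled by the valuation and finitely many coset conditions; that is exactly what the relative quantifier elimination for ordered abelian groups (Gurevich--Schmitt, or Cluckers--Halupczok) must be invoked to justify, and the bookkeeping of dp-rank through the auxiliary sorts is the substantive part of the Halevi--Hasson proof. You acknowledge this dependence in your final paragraph, but the step is asserted rather than carried out. In short: the outline is directionally sensible, but neither implication is established, and the $(1)\Rightarrow(2)$ reduction as stated is false for general ordered abelian groups.
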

	
	\begin{example}\thlabel{ex:snipdensenotdense}
			\begin{enumerate}[wide, labelwidth=!, labelindent=6pt]
		\item \label{ex:snipdensenotdense:1} Let 
		$$ B = \setbr{\left. \frac{a}{p_1\ldots p_m} \ \right|\ a \in \Z, i \in \N \text{ and } p_1,\ldots,p_i\geq 3 \text{ are prime}}.$$
		$B$ is $p$-divisble for any prime $p\geq 3$. Thus,  
		$|\{p \text{ prime} \mid [G_i:pG_i]=\infty \}| = 1$. By \thref{fact:snipgp}, $B$ is strongly NIP. Moreover, it dense in its divisible hull $\Q$ but not divisible, as $\frac 1 2 \notin B$. 
		
		\item \label{ex:snipdensenotdense:2} Let $G = B \oplus \Q$ ordered lexicographically. By \thref{fact:snipgp}, $G$ is strongly NIP. However, $G$ has no limit point in $\div{G}\setminus G$, as for any $(a,b) \in \div{G} \setminus G$, there is no element in $G$ between $(a,b-1)$ and $(a,b+1)$.
		\end{enumerate}
	\end{example}

		Our next aim is to obtain a characterisation of strongly NIP almost real closed fields (see \thref{thm:arcf}).
		It is known that any almost real closed field is NIP as an ordered field (see \cite[p.~1]{jahnke3}) and we have seen in \thref{prop:dparc} that every almost real closed field with respect to a dp-minimal ordered abelian group is dp-minimal. We obtain a similar result for almost real closed fields with respect to a strongly NIP ordered abelian group.
		
	\begin{proposition}\thlabel{cor:classification2}
		Let $(K,<)$ be an almost real closed field with respect to a strongly NIP ordered abelian group and let $G$ be strongly NIP ordered abelian group. Then $(K\pow{G},<)$ is a strongly NIP ordered field.
	\end{proposition}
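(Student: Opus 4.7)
The plan is to reduce $(K\pow{G},<)$ to an ordered Hahn series field over $\R$ via the Ax--Kochen--Ershov (AKE) principle, then apply \thref{lem:powstrongnip}. First, I would pick a henselian valuation $v$ on $K$ witnessing almost real closedness, so that $Kv$ is real closed and $vK$ is strongly NIP. By \thref{fact:snipgp}, there exist lex sums $H_0 = \bigoplus_{i \in I} H_i'$ with $vK \equiv H_0$ and $G_0 = \bigoplus_{j \in J} G_j'$ with $G \equiv G_0$, each satisfying the two finiteness conditions of the characterisation.

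Next, I would apply AKE twice. AKE on $K$ itself gives $(K,<) \equiv (\R\pow{vK},<) \equiv (\R\pow{H_0},<)$. Combining this with $G \equiv G_0$ and applying AKE to the natural valuation $\vmin$ on $K\pow{G}$ (whose residue field is $K$ and value group is $G$) yields
\[
(K\pow{G},<,\vmin) \equiv (\R\pow{H_0}\pow{G_0},<,\vmin).
\]
The right-hand side is canonically isomorphic, as an ordered valued field, to $(\R\pow{G_0 \oplus H_0},<,\vmin)$, so $(K\pow{G},<) \equiv (\R\pow{G_0 \oplus H_0},<)$.

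The key observation is then that $G_0 \oplus H_0$ itself has the form required by \thref{fact:snipgp}: by associativity of the lex construction, it is a single lex sum indexed by the suitably ordered disjoint union of $J$ and $I$, with components drawn from $\{G_j'\}_{j \in J} \cup \{H_i'\}_{i \in I}$. The two finiteness conditions are inherited directly---for each prime $p$, the set of components that are not $p$-divisible is a union of two finite sets, hence finite; and the per-component condition on primes with infinite index is unchanged since each component is already one of the $G_j'$ or $H_i'$. Thus $G_0 \oplus H_0$ is strongly NIP, so by \thref{lem:powstrongnip} the ordered field $(\R\pow{G_0 \oplus H_0},<)$ is strongly NIP. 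Since strong NIP is a property of the complete theory, $(K\pow{G},<)$ is strongly NIP.

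No major technical obstacle arises, but one design choice deserves emphasis: the argument deliberately avoids proving that elementary equivalence of ordered abelian groups commutes with lex sums (a Feferman--Vaught-type statement not recorded earlier in the paper). Instead, by substituting the canonical lex-sum representatives $H_0$ and $G_0$ on the \emph{valued-field} side via AKE, the operation $\oplus$ is only ever applied to groups already in the structural form of \thref{fact:snipgp}, so strong NIP can be read off directly from that characterisation.
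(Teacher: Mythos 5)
Your argument is correct and follows the same skeleton as the paper's proof: reduce $(K\pow{G},<)$ via the Ax--Kochen--Ershov principle to an ordered Hahn field over $\R$ whose exponent group is a lexicographic sum, check that this exponent group is strongly NIP using \thref{fact:snipgp}, and conclude with \thref{lem:powstrongnip}. The one place you genuinely diverge is in how the strong NIPness of the combined exponent group is justified. The paper forms $G \oplus H$ (with $H = vK$) first and then asserts, in one line, that it is strongly NIP ``by \thref{fact:snipgp}''; read literally, this needs the additional fact that if $G \equiv G_0$ and $H \equiv H_0$ then $G \oplus H \equiv G_0 \oplus H_0$, a Feferman--Vaught-type preservation statement for lexicographic sums that the paper does not record. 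You sidestep this by substituting the lex-sum normal forms $G_0$ and $H_0$ at the valued-field level, where only elementary equivalence of the residue field and value group is needed for AKE, so that $\oplus$ is applied only to groups already in the structural form of \thref{fact:snipgp}~(2), and the two finiteness conditions can be verified directly for $G_0 \oplus H_0$. This is a slightly longer but more self-contained route, and it makes explicit a step the paper leaves implicit.
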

	
	\begin{proof}
		Let $H$ be a strongly NIP ordered abelian group such that $(K,<)$ is almost real closed with respect to $H$. As in the proof of \thref{prop:arcarc} we have that $(K\pow{G},<) \equiv (\R\pow{G \oplus H}, <)$. Since $G$ and $H$ are strongly NIP, also $G\oplus H$ is strongly NIP by \thref{fact:snipgp}. Hence, by \thref{lem:powstrongnip}, also $(K\pow{G},<)$ is strongly NIP.
	\end{proof}
		
	\begin{corollary}\thlabel{conj:classification2}
		Let $(K,<)$ be an almost real closed with respect to a henselian valuation $v$ such that $vK$ is strongly NIP. Then $(K,<)$ is strongly NIP.
	\end{corollary}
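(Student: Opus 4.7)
The plan is to deduce this corollary essentially immediately from Proposition \thref{cor:classification2} together with the Ax--Kochen--Ershov principle; the substantive work is already done in Lemma \thref{lem:powstrongnip}.

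First, I would apply the Ax--Kochen--Ershov principle to the henselian ordered valued field $(K,<,v)$. Since $(K,<)$ is almost real closed with respect to $v$, the residue field $(Kv,<)$ is real closed, hence elementarily equivalent to $(\R,<)$. Combined with the trivial equivalence $vK \equiv vK$, AKE yields $(K,<) \equiv (\R\pow{vK},<)$.

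Next, because $vK$ is strongly NIP by hypothesis, Lemma \thref{lem:powstrongnip} shows that the ordered Hahn field $(\R\pow{vK},<)$ is strongly NIP. As strong NIP is a property of the complete theory and therefore preserved under elementary equivalence, it follows from the previous step that $(K,<)$ itself is strongly NIP, which is the desired conclusion.

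Alternatively, one can bypass an explicit appeal to AKE by invoking Proposition \thref{cor:classification2} directly with the trivial group $G = \{0\}$, which is vacuously strongly NIP; the identification $K\pow{\{0\}} = K$ then yields the conclusion. Either route reduces to the same underlying fact, namely \thref{lem:powstrongnip}, whose proof treats the ordered Hahn field case by transferring strong NIP through an $\ac$-valued field expansion and then recovering the order from the angular component map. I do not anticipate a genuine obstacle here: all the substantive ingredients are already in place, and what remains is only a matter of specialising them to the stated hypotheses.
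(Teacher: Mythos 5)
Your proposal is correct and matches the paper's proof, which is exactly your second route: apply Proposition \thref{cor:classification2} with $G=\{0\}$ and $H=vK$. Your primary route via Ax--Kochen--Ershov and \thref{lem:powstrongnip} is just an unpacking of that proposition's own proof, so the two are essentially identical.
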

	
	\begin{proof}
		This follows immediately from \thref{cor:classification2} by setting $G = \{0\}$ and $H = vK$.
	\end{proof}

		We obtain from \thref{conj:classification2} and \thref{lem:strongnipfieldresidue} the following characterisation of strongly NIP almost real closed fields.
	
	\begin{theorem}\thlabel{thm:arcf}
	 Let $(K,<)$ be an almost real closed field with respect to some ordered abelian group $G$. Then $(K,<)$ is strongly NIP if and only if $G$ is strongly NIP.
	\end{theorem}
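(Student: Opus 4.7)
The theorem is a direct combination of two results already established earlier in the section, so my proof plan is essentially to spell out how each direction follows.

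For the forward direction, suppose $(K,<)$ is strongly NIP. Since $K$ is almost real closed with respect to some ordered abelian group $G$, there exists a henselian valuation $v$ on $K$ with $Kv$ real closed and $vK = G$. I would then invoke \thref{lem:strongnipfieldresidue} directly: that proposition states that the value group of any henselian valuation on a strongly NIP ordered field is itself strongly NIP. Applying it to $v$ yields that $G = vK$ is strongly NIP.

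For the backward direction, suppose $G$ is strongly NIP. Here I would appeal to \thref{conj:classification2}, which asserts precisely that an almost real closed field with respect to a henselian valuation whose value group is strongly NIP is itself strongly NIP. Since $(K,<)$ is almost real closed with respect to the witnessing valuation $v$ with $vK = G$ strongly NIP, this immediately gives that $(K,<)$ is strongly NIP.

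The plan is therefore a short combination paragraph: both implications are consequences of results already proved. There is no real obstacle, since the heavy lifting has been carried out in \thref{lem:strongnipfieldresidue} (using the Shelah expansion together with \cite[Observation~3.8]{onshuus} and \cite[Observation~1.4~(2)]{shelah}) and in \thref{cor:classification2} (which reduces to \thref{lem:powstrongnip} via \thref{prop:arcarc} and the Ax--Kochen--Ershov principle, with the strong NIP of the Hahn field ultimately relying on \thref{fact:halevi} and the closure of $\bigoplus$ under strong NIP, namely \thref{fact:snipgp}). Thus the write-up will amount to two sentences invoking these cited results.
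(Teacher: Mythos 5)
Your proposal is correct and matches the paper exactly: the paper states the theorem as an immediate consequence of \thref{conj:classification2} (for the backward direction) and \thref{lem:strongnipfieldresidue} (for the forward direction), with no further argument given. Your two-sentence combination is precisely the intended proof.
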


	\section{Concluding Remarks}\label{sec:conclusion}

	Recall our two main conjectures.
	
	\mainconjecture*
	
	\classificationconjecture*
	
	In this final section, we will show that \thref{conj:main} and \thref{conj:classification} are equivalent. 
	
	\begin{remark}\thlabel{rmk:mainrmk}
		\begin{enumerate}[wide, labelwidth=!, labelindent=6pt]
			
			\item\label{rmk:mainrmk:1} An ordered field is real closed if and only if it is o-minimal. Hence, for any real closed field $K$, if $\OO \subseteq K$ is a definable convex ring, its endpoints must lie in $K \cup \{\pm \infty\}$. This implies that any definable convex valuation ring must already contain $K$, i.e. is trivial. Thus, the two cases in the consequence of \thref{conj:main} are exclusive.
			
			\item \label{rmk:mainrmk:2} Recall from \Autoref{sec:prelim} that the field $\Q$ is not NIP. By \thref{fact:hong}, the henselian valuation $\vmin$ is $\Lr$-definable in $\Q\pow{\Z}$. Hence, by \thref{lem:strongnipfieldresidue} $(\Q\pow{\Z},<)$ is an example of an ordered field which is not real closed, admits a non-trivial $\Lor$-definable henselian valuation but is not strongly NIP.
			
		\end{enumerate}
	\end{remark}

	
	\begin{theorem}\thlabel{thm:main}
		\thref{conj:main} and \thref{conj:classification} are equivalent.
	\end{theorem}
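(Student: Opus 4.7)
The plan is to prove both implications by direct appeal to the two bridging results already established in this section, namely Proposition \ref{conj2impconj1} and Proposition \ref{prop:conj1toconj2}.

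First I would show that \thref{conj:main} implies \thref{conj:classification}. Assuming \thref{conj:main}, every strongly NIP ordered field is either real closed or admits a non-trivial $\Lor$-definable henselian valuation; in particular, it admits a non-trivial henselian valuation. This is exactly the hypothesis of \thref{conj2impconj1}, whose conclusion is that every strongly NIP ordered field is almost real closed (with respect to the canonical valuation). This is precisely \thref{conj:classification}.

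For the converse, assume \thref{conj:classification} and let $(K,<)$ be a strongly NIP ordered field. If $K$ is real closed, then the conclusion of \thref{conj:main} already holds. Otherwise, by \thref{conj:classification}, $K$ is almost real closed with respect to some non-trivial henselian valuation $v$ (non-trivial because $K$ is not real closed, so in particular not equal to its real closed residue field). By \thref{prop:conj1toconj2}, there exists a non-trivial $\Lr$-definable henselian coarsening $u$ of $v$. Since every $\Lr$-definable subset of $K$ is $\Lor$-definable, $u$ is a non-trivial $\Lor$-definable henselian valuation on $K$, confirming \thref{conj:main}.

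There is no real obstacle, since both directions are essentially bookkeeping on top of \thref{conj2impconj1} and \thref{prop:conj1toconj2}; the only minor point to check is the first direction, where one must observe that $\Lor$-definability of a henselian valuation trivially implies its existence, so that the hypothesis of \thref{conj2impconj1} is indeed weaker than \thref{conj:main}.
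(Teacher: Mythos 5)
Your proof is correct and follows essentially the same route as the paper: the direction from \thref{conj:main} to \thref{conj:classification} is exactly an application of \thref{conj2impconj1} (noting that an $\Lor$-definable henselian valuation is in particular a henselian valuation), and the converse is exactly an application of \thref{prop:conj1toconj2} together with the observation that $\Lr$-definable implies $\Lor$-definable. The paper's own proof is just a terser version of the same bookkeeping.
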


	\begin{proof}
		Assume \thref{conj:classification}, and let $(K,<)$ be a strongly NIP ordered field which is not real closed. 
	 	Then $(K,<)$ admits a non-trivial henselian valuation $v$. By \thref{prop:conj1toconj2}, it also admits an non-trivial $\Lr$-definable henselian valuation.
		Now assume \thref{conj:main}. Let $(K,<)$ be strongly NIP ordered field. By \thref{conj2impconj1}, $K$ is almost real closed with respect to the canonical valuation $v$. 
	\end{proof}

	As a final observation, we will give two further equivalent formulations of \thref{conj:classification} which follow from results throughout this work.
	
	\begin{observation}
		The following are equivalent:
		
		\begin{enumerate}[wide, labelwidth=!, labelindent=6pt]
			
			\item\label{prop:finalobs:1} Any strongly NIP ordered field $(K,<)$ is almost real closed.
			
			\item\label{prop:finalobs:2} For any strongly NIP ordered field $(K,<)$, the natural valuation $\vnat$ on $K$ is henselian.
			
			\item\label{prop:finalobs:3} For any strongly NIP ordered valued field $(K,<,v)$, whenever $v$ is convex, it is already henselian.
		
		\end{enumerate}
		
	\end{observation}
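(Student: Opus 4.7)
My plan is to establish the equivalence by proving the cycle $(1)\Rightarrow(3)\Rightarrow(2)\Rightarrow(1)$. The first step is immediate: if $(K,<,v)$ is strongly NIP with $v$ convex, then the reduct $(K,<)$ is strongly NIP (reducts preserve strong NIP), hence almost real closed by $(1)$, and so by \thref{fact:convhens} every convex valuation on $K$, in particular $v$, is henselian.

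For $(3)\Rightarrow(2)$, take $(K,<)$ strongly NIP; the task is to arrange that $(K,<,\vnat)$ is strongly NIP, after which $(3)$ applies directly since $\vnat$ is convex. I will argue that $\OO_{\vnat}$, being the convex hull of $\Z$ in $K$, is externally definable: picking in a sufficiently saturated elementary extension $(K^*,<)\succ(K,<)$ an element $\eta$ realising the cut between the bounded and the unbounded elements of $K$, one has $\OO_{\vnat}=\{x\in K:|x|<\eta\}$. Hence $\vnat$ is definable in the Shelah expansion $(K,<)^{\mathrm{Sh}}$, which is strongly NIP, so $(K,<,\vnat)$ as a reduct of that expansion is strongly NIP. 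Applying $(3)$ then yields that $\vnat$ is henselian.

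For $(2)\Rightarrow(1)$, suppose $(K,<)$ is strongly NIP. By $(2)$, $\vnat$ is henselian; by \thref{lem:strongnipfieldresidue}, the residue field $K\vnat$ is strongly NIP, and by the definition of $\vnat$ it is archimedean. Showing that $\vnat$ witnesses $K$ to be almost real closed then reduces entirely to showing that $K\vnat$ is real closed.

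This last reduction is the substantive obstacle. The dp-minimal counterpart is \thref{prop:dparch}, but in the strongly NIP setting one must rule out the possibility that $K\vnat\subsetneq\rc{K\vnat}$ by a more delicate route. The strategy I would pursue is to argue by contradiction using the embedding $K\vnat\hookrightarrow\R$: an algebraic element $\alpha\in\rc{K\vnat}\setminus K\vnat$ produces an $\Lor$-definable Dedekind cut of $K\vnat$ (via the sign of its minimal polynomial between appropriate bounds), and by systematically varying the algebraic parameters one would aim to extract an IP pattern from these cuts, contradicting strong NIP.
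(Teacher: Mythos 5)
Your implications $(1)\Rightarrow(3)$ and $(3)\Rightarrow(2)$ are correct and essentially match the paper: the first is immediate from \thref{fact:convhens} after passing to the reduct, and the second uses exactly the paper's device of definability of convex valuations in the Shelah expansion (the paper cites \cite[Proposition~4.2]{jahnke2} where you argue the external definability of the convex hull of $\Z$ by hand, which is fine).

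The direction $(2)\Rightarrow(1)$, however, has a genuine gap, and as you set it up it cannot be closed by the sketch you give. Applying $(2)$ directly to $K$ leaves you needing to show that the archimedean strongly NIP ordered field $K\vnat$ is real closed --- but this is precisely \thref{qu:sniprc}, which the paper states as an \emph{open question}; your proposed strategy of extracting an IP pattern from algebraic cuts is exactly the sort of argument that is not currently known to work. The paper sidesteps this entirely by a saturation trick: pass to an $\aleph_1$-saturated elementary extension $(K_1,<)$ of $(K,<)$, which is still strongly NIP. By saturation, the residue field of the natural valuation on $K_1$ is all of $\R$, hence real closed for free; hypothesis $(2)$ applied to $(K_1,<)$ makes $\vnat$ henselian there, so $(K_1,<)$ is almost real closed, and then \thref{prop:arcelem} (the class of almost real closed ordered fields is closed under elementary equivalence) transfers the conclusion back to $(K,<)$. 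You should replace your reduction to the real closedness of $K\vnat$ by this move; without it the proof is incomplete.
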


	\begin{proof}
		(\ref{prop:finalobs:1}) implies (\ref{prop:finalobs:3}) by \thref{fact:convhens}. Suppose that (\ref{prop:finalobs:3}) holds and let $(K,<)$ be strongly NIP. By \cite[Proposition~4.2]{jahnke2}, any convex valuation is definable in the Shelah expansion $(K,<)^{\mathrm{Sh}}$, whence $(K,<,\vnat)$ is a strongly NIP ordered valued field. By assumption, $\vnat$ is henselian on $K$, which implies (\ref{prop:finalobs:2}). Finally, suppose that (\ref{prop:finalobs:2}) holds. Let $(K,<)$ be a strongly NIP ordered field and $(K_1,<)$ an $\aleph_1$-saturated elementary extension of $(K,<)$. Then $K_1\vnat = \R$. By assumption, $\vnat$ is henselian on $K_1$, whence $(K_1,<)$ is almost real closed. By \thref{prop:arcelem}, also $(K,<)$ is almost real closed.
	\end{proof}

	\section{Open Questions}\label{sec:questions}
	
	We conclude with open questions connected to results throughout this work.

 	\thref{conj:classification} for archimedean fields states that any strongly NIP ar\-chi\-me\-de\-an ordered field is real closed, as the only archimedean almost real closed fields are the real closed ones. \thref{prop:dparch} shows that any dp-minimal archimedean ordered fields is real closed. We can ask whether the same holds for all strongly NIP ordered fields.
	
	\begin{question}\thlabel{qu:sniprc}
		Let $(K,<)$ be a strongly NIP archimedean ordered field. Is $K$ necessarily real closed?
	\end{question}

		Note that any almost real closed field which is not real closed cannot be dense in its real closure (see \thref{rmk:denseimmediatefield}).		
		Thus, if \thref{conj:classification} is true, then, in particular, a strongly NIP ordered field which is not real closed cannot be dense in its real closure. Note that any dp-minimal ordered field which is dense in its real closure is real closed. Indeed, by \thref{fact:dpfield} and \thref{thm:densitytransfers}, any dp-minimal ordered field which is dense in its real closure is elementarily equivalent to a non-archimedean ordered Hahn field which is dense in its real closure. However, by \thref{rmk:denseimmediatefield} the only ordered Hahn fields which are dense in their real closure are the real closed ones.

	\begin{question}\thlabel{qu:snipdenserc}
		Let $(K,<)$ be a strongly NIP ordered field which is dense in its real closure. Is $(K,<)$ real closed?
	\end{question}

	Note that \thref{qu:snipdenserc} is more general than \thref{qu:sniprc}, as a positive answer to \thref{qu:snipdenserc} would automatically tell us that any archimedean ordered field is real closed.

		Note that any archimedean ordered field $(K,<)$ does not admit a non-trivial valuation as $\Z$ must be contained in any convex subring of $K$. Hence, any ordered field with an archimedean model does not admit a non-trivial $\Lor$-definable convex valuation. We have seen in \thref{prop:ordfieldnoarchmodel} that there are non-archimedean ordered fields which are dense in their real closure but do not have an archimedean model. In these ordered field, it may be possible to find $\Lor$-definable convex valuations. In \cite{jahnke} it is not investigated whether the two cases in \thref{prop:defval} are exclusive. We pose this as the following question. 
		
	\begin{question}
		Is there an ordered field which is dense in its real closure and admits an non-trivial $\Lor$-definable convex valuation?
	\end{question}

	In \thref{lem:lemmaerdos} and \thref{prop:lemmaerdos} we have seen that any ordered field $K$ with $\td(K) \geq \aleph_0$ there is a transcendence basis $T \subseteq K$ of $K$ which is dense in $K$. Thus, for $F = \Q(T)$, we have that $F$ is dense in $K$ and $\Q$ is relatively algebraically closed in $F$. Note that a non-archimedean ordered field $K$ with $\td(K) < \aleph_0$ cannot admit a transcendence basis dense in $K$. However, it is still possible that $K$ has a dense subfield $F$ such that $\Q$ is relatively algebraically closed in $K$. We pose this as a question for a non-archimedean real closed field with transcendece degree $1$.
	
	\begin{question}
		Let $K = \rc{\Q(t)}$ ordered by $t > \N$. Is there a dense subfield $F \subseteq K$ such that $\Q$ is relatively algebraically closed in $F$?
	\end{question}

	\begin{acknowledgements} 
		The first author was supported by a doctoral scholarship of Studienstiftung des deutschen
		Volkes as well as of Carl-Zeiss-Stiftung. 
		We started this research at the \emph{Model Theory, Combinatorics and Valued fields Trimester} at the Institut Henri Poincaré in March 2018. All three authors wish to thank the IHP for its hospitality, and Immanuel Halupczok, Franziska Jahnke, Vincenzo Mantova and Florian Severin for discussions. We also thank Yatir Halevi and Assaf Hasson for helpful comments on a previous version of this work and Vincent Bagayoko for giving an answer to a question leading to \thref{prop:ordfieldnoarchmodel}.
	\end{acknowledgements}
	
	\begin{footnotesize}
		
	\end{footnotesize}

\end{document}